\newtheorem{thm}{Theorem}[section]
\newtheorem{prop}[thm]{Proposition}
\newtheorem{lem}[thm]{Lemma}
\theoremstyle{definition}
\numberwithin{equation}{section}
\newtheorem{rem}[thm]{\bf Remark}
\newtheorem{ex}[thm]{\bf Example}
\def\bbR{\mathbb{R}}
\def\bbT{\mathbb{T}}
\def\bbZ{\mathbb{Z}}
\def\bfSigma{\mathbf{\Sigma}}
\def\bfa{\mathbf{a}}
\def\bfb{\mathbf{b}}
\def\bfc{\mathbf{c}}
\def\bfe{\mathbf{e}}
\def\bfg{\mathbf{g}}
\def\bfv{\mathbf{v}}
\def\bfzero{\mathbf{0}}
\def\rmsgn{\mathrm{sgn}}
\newcommand{\overunder}[2]{
\!\!\begin{array}{c}
\scriptstyle{#1}\\[-9pt]
-\!\!\!-\\[-.1in]
\scriptstyle{#2}
\end{array}
}
\def\cp{\textcircled{$+$}}
\def\cm{\textcircled{$-$}}
\def\bp{\boxed{\!+\!}}
\def\bm{\boxed{\!-\!}}
\begin{document}
\bibliographystyle{amsalpha}

\title[Local and global patterns of $G$-fans]{
Local and global patterns  of rank 3 $G$-fans\\ of totally-infinite type}
\address{\noindent Graduate School of Mathematics, Nagoya University, 
Chikusa-ku, Nagoya,
464-8604, Japan}
\email{nakanisi@math.nagoya-u.ac.jp}
\author{Tomoki Nakanishi}
\subjclass[2020]{Primary 13F60}
\keywords{cluster algebra}
\maketitle

\begin{abstract}
We focus on the $G$-fans associated with cluster patterns whose initial exchange matrices are of infinite type.
We study the asymptotic behavior of the $g$-vectors around the initial $G$-cone under the alternating mutations for two indices of infinite type.
In the rank 3 case,
we classify them into several patterns.
As an application, the incompleteness of the $G$-fans of infinite type is proved.
We observed that 
the local pattern of a rank 3 $G$-fan of totally-infinite type classified by the above types
correlates with its global pattern.
Following the classification of the local patterns  (together with the Markov constant),
we present several prototypical examples of the global patterns of the rank 3 $G$-fans of totally-infinite type,
many of which are new in the literature.
\end{abstract}

\section{Introduction}
\label{sec:intro1}

Let $\bbT_n$ be the $n$-regular tree whose edges are labeled by $1$, \dots, $n$,
and fix an initial vertex  $t_0\in \bbT_n$.
Let $\bfSigma(B)$ be a cluster pattern of rank $n$ with the initial exchange matrix $B=B_{t_0}$ \cite{Fomin02,Fomin07},
where $B$ is an arbitrary skew-symmetrizable (integer) matrix.
For each seed $\Sigma_t$ ($t\in \bbT_n$) of $\bfSigma(B)$, one can associate an $n\times n$ integer matrix
$G_t$ called a \emph{$G$-matrix} \cite{Fomin07,Nakanishi11a}.
Each column vector $\bfg_{i;t}$ ($i=1$, \dots, $n$) of $G_t$ is called a \emph{$g$-vector}.
Each $G$-matrix is unimodular \cite{Nakanishi11a} thanks to the column sign-coherence of the $C$-matrices proved by \cite{Gross14}.
Thus, we have a full-dimensional cone $\sigma(G_t)$ in $\bbR^n$ spanned by the $g$-vectors of $G_t$,
which we call a \emph{$G$-cone}.
Collecting all  $G$-cones and their faces, we have a fan $\Delta(B)$.
This fan was initially studied in \cite{Reading11, Reading12}
and called the \emph{$g$-vector fan}.
Here, we alternatively call it the \emph{$G$-fan} to emphasize the construction by $G$-matrices.
The $G$-fan is a geometric realization of the \emph{cluster complex} of $\bfSigma(B)$ \cite{Fomin03a,Gross14,Reading17};
thus, it contains essential information of $\bfSigma(B)$.
Also, it plays a central role in the scattering diagram method to study cluster patterns and cluster algebras
\cite{Gross14, Muller15, Cheung15,Reading15, Reading17,Yurikusa19,Nakanishi22a,Reading22}.

In this paper, we focus on the $G$-fans whose initial exchange matrices are of infinite type,
which have been studied less so far except for the rank 2 and the affine case, and some limited examples of rank 3.
The content of the paper is divided into two parts.

In the first part,
we study the asymptotic behavior of the $g$-vectors around the initial $G$-cone under the alternating mutations for two indices
$i \neq j$ such that the initial exchange matrix $B$ satisfies $|b_{ij}b_{ji}|\geq 4$. 
It is easy to see that the problem is reduced to the rank 3 case.
Then, using the results of \cite{Gekhtman19},
we classify them into six patterns (Types 1, 2, 3, 4-1, 4-2, and 4-3).
As an immediate application, the incompleteness of the $G$-fans of infinite type is proved.

In the second part, we give some experimental results.
We say that a skew-symmetrizable matrix $B$ is of  \emph{totally-infinite type}
if $|b_{ij}b_{ji}|\geq 4$ for any pair $i\neq j$.
We also say that a $G$-fan $\Delta(B)$ is of totally-infinite type
if $B$ is of totally-infinite type.
For such $\Delta(B)$ of rank 3, we assign the triplet of the types for
the pairs of indices $(i,j)$=$(2,3)$,  $(3,1)$, $(1,2)$ in the first part.
This data certainly controls the local behavior (the \emph{local pattern}) of $\Delta(B)$ around the rays $\bbR_{\geq 0} \bfe_i$ ($i=1$, 2, 3).
On the other hand,
looking at the pictures of these $G$-fans, we observed that 
the local pattern of a $G$-fan correlates with its global behavior (the \emph{global pattern}).
To support this observation,
following the classification of the local patterns  (together with additional data called the \emph{Markov constant} \cite{Beineke06,Akagi24}),
we present several prototypical examples of the global patterns of the rank 3 $G$-fans of totally-infinite type,
many of which are new in the literature.
See Table \ref{tab:corres1} for the summary.
Our naive hope is that these examples may exhaust all global patterns of the rank 3 $G$-fans of totally-infinite type with possible minor variants.

The content of the paper is as follows.
In Section 2, we recollect the results on the $c$- and $g$-vectors of rank 2 cluster patterns.
In Section 3, we study 
the asymptotic behavior of $g$-vectors
around the initial $G$-cone under the alternating mutations for two indices of infinite type,
and classify them into six patterns.
In Section 4, we prove the incompleteness of the $G$-fans of infinite type  (Theorem \ref{thm:incomplete1}).
In Section 5, we visualize the above patterns of $g$-vectors by the pictures of rank 3 $G$-fans.
In Section 6, we exhibit
several examples of global patterns of the rank 3 $G$-fans of totally-infinite type
based on the observation explained as above.

This work is supported in part by JSPS grant No. JP22H01114.

\section{Review of rank 2 results}

We recall the mutation formulas for the exchange matrices, $c$-vectors, and $g$-vectors \cite{Fomin07,Nakanishi11a}:
\begin{align}
\label{eq:bmut1}
b_{ij;t'} & = 
\begin{cases}
-b_{ij;t}
&
\text{$i=k$ or $j=k$},
\\
b_{ij;t} + b_{ik;t}[b_{kj;t}]_+ + [-b_{ik;t}]_+ b_{kj;t},
&
\text{otherwise},
\end{cases}
\\
\label{eq:cmut1}
\bfc_{i;t'}
& =
\begin{cases}
-\bfc_{k;t} 
& i=k,
\\
\bfc_{i;t}
+ [\varepsilon_{k;t} b_{ki;t}]_+ \bfc_{k;t}
& i \neq k,
\end{cases}
\\
\label{eq:gmut1}
\bfg_{i;t'}
& =
\begin{cases}
-\bfg_{k;t} 
+\sum_{j=1}^n [-\varepsilon_{k;t} b_{jk;t}]_+ \bfg_{j;t}
& i=k,
\\
\bfg_{i;t}
& i \neq k,
\end{cases}
\end{align}
where $t$ and $t'$ are $k$-adjacent vertices in $\bbT_n$,
$[a]_+=\max(a,0)$, and $\varepsilon_{k;t}$ is the sign for the $c$-vector $\bfc_{k;t}$  (the tropical sign) .

Here we recollect the results on the $c$- and $g$-vectors of rank 2 cluster patterns
obtained by \cite{Reading12}. Here, we present them in the convention of \cite{Gekhtman19}.
See also \cite{Nakanishi22a} for the background on the $c$- and $g$-vectors.

\subsection{Chebyshev polynomials}
The \emph{Chebyshev polynomials $U_n(t)$ $(n\in \bbZ_{\geq -2})$ of the second kind}
are defined by the following recursion relation:
\begin{gather}
U_{-2}(t)= -1,
\quad
U_{-1}(t)= 0,
\\
\label{eq:Urel1}
U_{n}(t) = 2t U_{n-1}(t) - U_{n-2}(t)
\quad
(n \geq 0).
\end{gather}
For example, we have
\begin{align}
U_0 (t) =1,
\quad
U_1(t) = 2t,
\quad
U_2(t) = 4t^2-1,
\quad
U_3(t) = 8t^3 - 4t.
\end{align}
The following properties are well-known.
\begin{gather}
\label{eq:Up1}
U_n(t) >0
\quad (n \geq 0,\, t\geq1),
\\
\label{eq:Up2}
U_n(t)^2 - U_{n-1}(t)U_{n+1}(t)=1
\quad (n \geq -1),
\\
\label{eq:Up3}
\frac{U_n(t)}{U_{n-1}(t)} 
> 
\frac{U_{n+1}(t)}{U_{n}(t)}
\quad (n\geq 1),
\\
\label{eq:Up4}
U_n(t) = \frac{(t+\sqrt{t^2-1})^{n+1} - (t-\sqrt{t^2-1})^{n+1}}{2\sqrt{t^2-1}}
\quad (n \geq -2),
\\
\label{eq:Up5}
\lim_{n\rightarrow \infty}
\frac{U_{n+1}(t)}{U_{n}(t)}= t+\sqrt{t^2-1}
\quad
(|t| \geq 1).
\end{gather}

\subsection{$C$- and $G$-matrices of rank 2 cluster patterns}

We label the vertices of the $2$-regular tree $\bbT_2$ by integers in the following way:
\begin{align}
\label{eq:T2}
\cdots
\
\overunder{2}
\
-2
\
\overunder{1}
\
-1
\
\overunder{2}
\
0
\
\overunder{1}
\
1
\
\overunder{2}
\
2
\
\overunder{1}
\
\cdots
\end{align}
We take the vertex 0 as the initial vertex $t_0$.
Starting from the initial vertex 0, we call the sequence of mutations $1$, $2$, $1$, $2$, \dots,
toward the right direction the \emph{forward mutations}.
The opposite sequence of mutations $2$, $1$, $2$, $1$, \dots,
toward the left direction 
is called the \emph{backward mutations}.
We exclusively consider the initial exchange matrix of \emph{infinite type}
\begin{align}
\label{eq:B1}
B_0=
\begin{pmatrix}
0 & -b
\\
a & 0
\end{pmatrix}
\quad
(a,\, b \in \bbZ_{>0}, \ ab\geq 4).
\end{align}
The case $ab=4$ is of affine type, and the case $ab\geq 5$ is of non-affine (infinite) type.
Let $C_t$ and $G_t$ be the $C$- and $G$-matrices at each vertex $t\in \bbT_2$.
Let $\bfc_{i;t}$ and $\bfg_{i;t}$ ($i=1,\, 2$) be the $i$th 
column vectors of $C_t$ and $G_t$ ($c$- and $g$-vectors).
They are explicitly expressed by the Chevychev polynomials as follows.
We set 
\begin{align}
\kappa = \sqrt{ab},
\quad
\nu = \sqrt{b/a}.
\end{align}
In particular,
\begin{align}
\nu \kappa = b,
\quad
\nu^{-1} \kappa = a.
\end{align}
In this paper, we only use the value of $U_n(t)$ at $t=\kappa/2\geq 1$.
So, we simply write $U_n:= U_n(\kappa/2)>0$.
Then, we have
\begin{gather}
U_n = \kappa U_{n-1} - U_{n-2}
\quad
(n \geq 0),
\\
U_0 =1,
\quad
U_1 = \kappa,
\quad
U_2 = \kappa^2-1,
\quad
U_3 = \kappa^3 - 2\kappa.
\end{gather}
The $C$-matrices are given by the following formulas \cite{Gekhtman19}:
\begin{gather}
C_0=\begin{pmatrix}
1 & 0
\\
0 & 1
\end{pmatrix}
,
\quad
C_1=\begin{pmatrix}
-1 & 0
\\
0 & 1
\end{pmatrix}
,
\quad
C_2=\begin{pmatrix}
-1 & 0
\\
0 & -1
\end{pmatrix}
,
\\
C_{2n+1}=\begin{pmatrix}
U_{2n-2} & -\nu U_{2n-1}
\\
\nu^{-1} U_{2n-3} & -U_{2n-2}
\end{pmatrix}
\quad
(n\geq 1)
,
\quad
C_{2n}=\begin{pmatrix}
-U_{2n-2} & \nu U_{2n-3}
\\
- \nu^{-1} U_{2n-3} & U_{2n-4}
\end{pmatrix}
\quad
(n\geq 1),
\\
C_{-2n-1}=\begin{pmatrix}
U_{2n} & -\nu U_{2n-1}
\\
 \nu^{-1} U_{2n+1} & -U_{2n}
\end{pmatrix}
\quad
(n\geq 0)
,
\quad
C_{-2n}=\begin{pmatrix}
-U_{2n-2} & \nu U_{2n-1}
\\
- \nu^{-1} U_{2n-1} & U_{2n}
\end{pmatrix}
\quad
(n\geq 0).
\end{gather}
We have $|C_t|= (-1)^t$ by \eqref{eq:Up2}.
We see that each $c$-vector $\bfc_{i;t}$ has the definite sign $\varepsilon_{i;t}$ (the \emph{sign-coherence} of $c$-vectors).
We call it the \emph{tropical sign} at $(i;t)$.
By \eqref{eq:Up1}, we read the tropical signs in the above formula as follows:
\begin{align}
\label{eq:ts1}
\begin{tabular}{wc{10pt}|wc{15pt}wc{10pt}wc{10pt}wc{10pt}wc{10pt}|wc{10pt}|wc{10pt}wc{10pt}wc{10pt}wc{10pt}wc{10pt}wc{10pt}wc{15pt}}
$t$ &  & $-4$ & $-3$& $-2$ & $-1$ & $0$ & $1$ & $2$ & $3$ & $4$& $5$ & $6$
\\
\hline
$\varepsilon_{1;t}$ & $\cdots$ & $-$ & \bp & $-$ & \bp & \cp & $-$ & \cm & $+$ & \cm & $+$ & \cm & $\cdots$
\\
$\varepsilon_{2;t}$ & $\cdots$ & \bp & $-$ & \bp & $-$ & \bp & \cp & $-$ & \cm & $+$& \cm & $+$ & $\cdots$
\end{tabular}
\end{align}
Observe that the pattern is irregular at $t=0$, $1$, $2$.
The circled and boxed signs are relevant to the forward and backward
mutations.

One can convert the above formulas to the ones for $G$-matrices by using the duality \cite{Nakanishi11a}
\begin{align}
\label{eq:dual1}
G_t=D^{-1}(C_t^T)^{-1}D,
\end{align}
where the skew-symmetrizer $D$ of $B_0$ is given by $D=\mathrm{diag}(a,b)$. 
Then, we have the formulas
\begin{gather}
G_0=\begin{pmatrix}
1 & 0
\\
0 & 1
\end{pmatrix}
,
\quad
G_1=\begin{pmatrix}
-1 & 0
\\
0 & 1
\end{pmatrix}
,
\quad
G_2=\begin{pmatrix}
-1 & 0
\\
0 & -1
\end{pmatrix}
,
\\
\label{eq:GU1}
G_{2n+1}=\begin{pmatrix}
U_{2n-2} & \nu U_{2n-3}
\\
- \nu^{-1} U_{2n-1} & -U_{2n-2}
\end{pmatrix}
\quad
(n\geq 1)
,
\quad
G_{2n}=\begin{pmatrix}
U_{2n-4} & \nu U_{2n-3}
\\
- \nu^{-1} U_{2n-3} & - U_{2n-2}
\end{pmatrix}
\quad
(n\geq 1),
\\
\label{eq:GU2}
G_{-2n-1}=\begin{pmatrix}
U_{2n} & \nu U_{2n+1}
\\
-  \nu^{-1} U_{2n-1} & -U_{2n}
\end{pmatrix}
\quad
(n\geq 0)
,
\quad
G_{-2n}=\begin{pmatrix}
U_{2n} & \nu U_{2n-1}
\\
- \nu^{-1} U_{2n-1} & - U_{2n-2}
\end{pmatrix}
\quad
(n\geq 0).
\end{gather}

\subsection{Sequences of $g$-vectors}

Under the forward mutations, we have the following sequence of $g$-vectors
($m\geq 1$):
\begin{align}
\label{eq:gseq1}
\bfg_m=
\begin{pmatrix}
\alpha_m
\\
\beta_m
\end{pmatrix}
:=
\begin{cases}
\bfg_{1;m}
=
\begin{pmatrix}
U_{m-3}
\\
-\nu^{-1}U_{m-2}
\end{pmatrix}
& \text{$m$: odd},
\\
\bfg_{2;m}
=
\begin{pmatrix}
\nu U_{m-3}
\\
-U_{m-2}
\end{pmatrix}
& \text{$m$: even}.
\end{cases}
\end{align}
In \eqref{eq:ts1}, the circled signs are stabilized to $-$
for $t\geq  2$.
Thus, by \eqref{eq:gmut1}, the above $g$-vectors obey the recursion with the initial condition
\begin{gather}
\bfg_1=
\begin{pmatrix}
-1
\\
0
\end{pmatrix}
,
\quad
\bfg_2=
\begin{pmatrix}
0
\\
-1
\end{pmatrix},
\\
\label{eq:gmut2}
\bfg_{m+2}
=
\begin{cases}
-\bfg_m + a \bfg_{m+1}
&
\text{$m$: odd},
\\
-\bfg_m + b \bfg_{m+1}
&
\text{$m$: even}.
\end{cases}
\end{gather}
Both components $\alpha_m$ and $\beta_m$ diverge in the limit $m\rightarrow \infty$.
However, by \eqref{eq:Up3} and \eqref{eq:Up5},
the slope $\beta_m/\alpha_m$ monotonically converges to the following value:
\begin{align}
\label{eq:lim1}
\lim_{m\rightarrow \infty}
 \frac{\beta_m}{\alpha_m}
=\lim_{m\rightarrow \infty}
 -\nu^{-1}\frac{U_{m-2}}{U_{m-3}}
 =
 -\frac{ab+\sqrt{ab(ab-4)}}{2b}
=
 -\frac{2a}{ab-\sqrt{ab(ab-4)}}.
\end{align}

Similarly,
under the backward mutations,
we have the following sequence of $g$-vectors
($m\geq 1$):
\begin{align}
\label{eq:gseq2}
\bfg'_m=
\begin{pmatrix}
\alpha'_m
\\
\beta'_m
\end{pmatrix}
=:
\begin{cases}
\bfg_{2;-m}
=
\begin{pmatrix}
\nu U_{m}
\\
-U_{m-1}
\end{pmatrix}
& \text{$m$: odd},
\\
\bfg_{1;-m}
=
\begin{pmatrix}
U_{m}
\\
-\nu^{-1} U_{m-1}
\end{pmatrix}
& \text{$m$: even}.
\end{cases}
\end{align}
In \eqref{eq:ts1}, the boxed signs are stabilized to $+$
for $t\leq 0$.
Thus, by \eqref{eq:gmut1}, the above $g$-vectors obey the recursion with the initial condition
\begin{gather}
\bfg'_1=
\begin{pmatrix}
b
\\
-1
\end{pmatrix}
,
\quad
\bfg'_2=
\begin{pmatrix}
ab-1
\\
-a
\end{pmatrix},
\\
\label{eq:gmut3}
\bfg'_{m+2}
=
\begin{cases}
-\bfg'_m + b \bfg'_{m+1}
&
\text{$m$: odd},
\\
-\bfg'_m + a \bfg'_{m+1}
&
\text{$m$: even}.
\end{cases}
\end{gather}
Again, the slope $\beta'_m/\alpha'_m$ monotonically converges to the following value:
\begin{align}
\label{eq:lim2}
\lim_{m\rightarrow \infty}
 \frac{\beta'_m}{\alpha'_m}
=\lim_{m\rightarrow \infty}
 -\nu^{-1}\frac{U_{m-1}}{U_{m}}
=
 -\frac{2a}{ab+\sqrt{ab(ab-4)}}
 =
 -\frac{ab-\sqrt{ab(ab-4)}}{2b}.
\end{align}

Let 
\begin{align}
\label{eq:v1}
\bfv=
\begin{pmatrix}
1
\\
-(ab+ \sqrt{ab(ab-4)})/2b
\end{pmatrix}
,
\quad
\bfv'=
\begin{pmatrix}
1
\\
-(ab- \sqrt{ab(ab-4)})/2b
\end{pmatrix}
.
\end{align}
Then, 
\begin{align}
\lim_{m \rightarrow \infty} \alpha_m^{-1} \bfg_m = \bfv,
\quad
\lim_{m \rightarrow \infty} \alpha'_m{}^{-1} \bfg'_m = \bfv'.
\end{align}
If $ab=4$, we have $\bfv=\bfv'$.

\begin{ex}
We consider the case $(a,b)=(3,2)$ as the running example throughout the paper.
We have $\kappa = 6$, $\nu=\sqrt{2/3}$, $\nu^{-1}=\sqrt{3/2}$,
and 
\begin{align}
U_0 =1,
\quad
U_1 =\sqrt{6},
\quad
U_2 = 5,
\quad
U_3 = 4\sqrt{6},
\quad
U_4  =19,
\quad
U_5 = 15\sqrt{6}.
\end{align}
Then, we have
\begin{gather*}
\bfg_3=
\begin{pmatrix}
1
\\
-3
\end{pmatrix}
,
\quad
\bfg_4=
\begin{pmatrix}
2
\\
-5
\end{pmatrix}
,
\quad
\bfg_5=
\begin{pmatrix}
5
\\
-12
\end{pmatrix}
,
\quad
\bfg_6=
\begin{pmatrix}
8
\\
-19
\end{pmatrix}
,
\quad
\bfg_7=
\begin{pmatrix}
19
\\
-45
\end{pmatrix}
,
\\
\bfg'_1=
\begin{pmatrix}
2
\\
-1
\end{pmatrix}
,
\quad
\bfg'_2=
\begin{pmatrix}
5
\\
-3
\end{pmatrix}
,
\quad
\bfg'_3=
\begin{pmatrix}
8
\\
-5
\end{pmatrix}
,
\quad
\bfg'_4=
\begin{pmatrix}
19
\\
-12
\end{pmatrix}
,
\quad
\bfg'_5=
\begin{pmatrix}
30
\\
-19
\end{pmatrix}
.
\end{gather*}
\end{ex}

\subsection{Incompleteness of $G$-fan}
For $\bfa$, $\bfb\in \bbZ^2$, let $\sigma(\bfa,\bfb)=\bbR_{\geq 0} \bfa + \bbR_{\geq 0} \bfb$ be the cone in $\bbR^2$ spanned by $\bfa$ and $\bfb$.
The \emph{$G$-fan} $\Delta(B_0)$ consists of the
cones $\sigma(\bfe_1,\bfe_2)$, $\sigma(\bfg_1,\bfe_2)$, $\sigma(\bfe_1,\bfg'_1)$,
 $\sigma(\bfg_m, \bfg_{m+1})$,  $\sigma(\bfg'_m, \bfg'_{m+1})$  ($m\geq 1$) and their faces.
It is depicted in Figure \ref{fig:Gfan1}.
For the affine type ($ab=4$),
the one-dimensional region
$\sigma(\bfv)\setminus\{\bfzero\}$
(``the crack'') is not covered by the $G$-fan $\Delta(B)$.
For the non-affine type ($ab\geq 5$),
the two-dimensional region 
$\sigma(\bfv,\bfv')\setminus\{\bfzero\}$
(the Badlands)
is not covered by the $G$-fan $\Delta(B)$.
So, $\Delta(B)$ is incomplete in both cases.
(Recall that a fan in $\bbR^n$ is \emph{complete} if its support covers the entire region of  $\bbR^n$.)

\begin{figure}
\centering
\begin{tikzpicture}[scale=1.7]
\draw(0,0)--(1,0);
\draw(0,0)--(0,1);
\draw(0,0)--(-1,0);
\draw(0,0)--(0,-1);
\draw(0,0)--(0.25,-1);
\draw(0,0)--(0.33,-1);
\draw(0,0)--(0.375,-1);
\draw(0,0)--(0.4,-1);
\draw(0,0)--(0.625,-1);
\draw(0,0)--(0.66,-1);
\draw(0,0)--(0.75,-1);
\draw(0,0)--(1,-1);
\draw [dotted, thick] (0,0)--(0.5,-1);
 \node at (1.2,0){\small $\bfe_1$};
 \node at (0,1.2){\small $\bfe_2$};
 \node at (-1.2,0){\small $\bfg_1$};
 \node at (0,-1.2){\small $\bfg_2$};
 \node at (0.3,-1.2){\small $\bfg_3$};
 \node at (1.07,-1.17){\small $\bfg'_1$};
 \node at (0.8,-1.17){\small $\bfg'_2$};
 \node at (0.52,-1.2){\small $\bfv$};
 \node at (0,-1.7){(a) affine type};
 \node at (0,-2){$(a,b)=(4,1)$};
 \end{tikzpicture}
\hskip50pt
\begin{tikzpicture}[scale=1.7]
\filldraw [fill=black!10, draw=white] (0,0) -- (0.276,-1) -- (0.723,-1) -- (0,0);
\draw(0,0)--(1,0);
\draw(0,0)--(0,1);
\draw(0,0)--(-1,0);
\draw(0,0)--(0,-1);
\draw(0,0)--(0.25,-1);
\draw(0,0)--(0.266,-1);
\draw(0,0)--(0.272,-1);
\draw(0,0)--(0.276,-1);
\draw(0,0)--(0.723,-1);
\draw(0,0)--(0.727,-1);
\draw(0,0)--(0.733,-1);
\draw(0,0)--(0.75,-1);
\draw(0,0)--(1,-1);
 \node at (0.28,-1.2){\small $\bfv$};
 \node at (0.75,-1.17){\small $\bfv'$};
 \node at (0,-1.7){(b) non-affine type};
  \node at (0,-2){$(a,b)=(5,1)$};
 \end{tikzpicture}
 \vskip-10pt
\caption{Examples of $G$-fans of infinite type.
In the case (b), the slopes of the $g$-vectors $\bfg_i$, $\bfg'_i$ ($i\geq 2$)
are so close to the ones of $\bfv$ and $\bfv'$, respectively,
so that it hard to depict them.
}
\label{fig:Gfan1}
\end{figure}
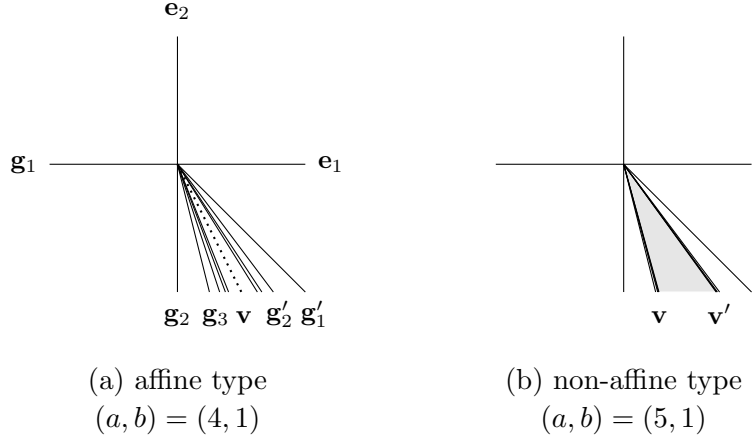

\section{Asymptotic behavior of $g$-vectors under alternating mutations}
\label{sec:lemv1}

Let us consider a cluster pattern $\bfSigma(\tilde B)$ of rank $n$ with
the initial exchange matrix $\tilde B$.
Suppose that  $\bfSigma(\tilde B)$ is of infinite type.
Then, by \cite[Thm.~1.8]{Fomin03a},
there is a matrix $\tilde B'$ such that
$\tilde B'$ is mutation-equivalent to $\tilde B$, and 
$|\tilde b'_{ij}\tilde b'_{ji}|\geq 4$ for some pair $i\neq j$.
The $G$-fans $\Delta(B)$ and $\Delta(B')$ are related by a piecewise-linear
transformation (e.g., \cite[II.Prop.~2.25]{Nakanishi22a}).
So, without loss of generality,
we may choose $\tilde B'$ as the initial exchange matrix.
By changing the labels if necessary,
we may assume  that the initial exchange matrix has the form
\begin{align}
\label{eq:tB0}
\tilde B_0=
\begin{pmatrix}
B_0 & *
\\
* & *
\end{pmatrix}
,
\end{align}
where $B_0$ is the rank 2 matrix of \eqref{eq:B1}.
By identifying the initial vertex with the vertex 0 in 
\eqref{eq:T2}, we consider mutations restricted on $\bbT_2$ for the indices 1 and 2.
This gives a subfan $\Delta_{12}(\tilde B)$ of $\Delta(\tilde B)$.
As in the rank 2 case,
we have the sequences of $g$-vectors
 $\tilde \bfg_m$ and $\tilde \bfg'_m$ ($m\geq 1$)
for the forward and backward mutations along $\bbT_2$.
By \eqref{eq:gmut1}, we have
\begin{align}
\tilde \bfg_m
= 
\begin{pmatrix}
\bfg_m
\\
*
\end{pmatrix}
,
\quad
\tilde \bfg'_m
= 
\begin{pmatrix}
\bfg'_m
\\
*
\end{pmatrix}
,
\end{align}
where $\bfg_m$ and $\bfg'_m$ the ones in \eqref{eq:gseq1} and \eqref{eq:gseq2}.
The fan $\Delta_{12}(\tilde B)$ consists of the
cones
\begin{gather}
\sigma(\bfe_1,\dots, \bfe_n),\quad \sigma(\bfg_1,\bfe_2, \dots, \bfe_n),\quad \sigma(\bfe_1,\bfg'_1, \bfe_3,\dots, \bfe_n),
\\
 \sigma(\bfg_m, \bfg_{m+1}, \bfe_3, \dots, \bfe_n), \quad \sigma(\bfg'_m, \bfg'_{m+1}, \bfe_3, \dots, \bfe_n)\quad  (m\geq 1)
 \end{gather}
and their faces.
Let
\begin{align}
\label{eq:tv1}
\tilde \bfv := \lim_{m\rightarrow \infty} \alpha_m^{-1}\bfg_m
=
\begin{pmatrix}
\bfv
\\
*
\end{pmatrix}
,
\quad
\tilde \bfv' := \lim_{m\rightarrow \infty} \alpha'_m{}^{-1}\bfg'_m
=
\begin{pmatrix}
\bfv'
\\
*
\end{pmatrix}
.
\end{align}
It is not obvious whether each component has a finite limit.
However, the following fact holds:

\begin{prop}
\label{prop:v1}
Each component of  $\tilde\bfv$ and $\tilde\bfv'$ has a finite limit.
Moreover, if $ab=4$, then $\tilde\bfv=\tilde\bfv'$.
\end{prop}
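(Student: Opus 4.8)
The plan is to extract an explicit linear recursion for the "extra" components of $\tilde\bfg_m$ and $\tilde\bfg'_m$, driven by the known rank 2 data, and then solve it. Write $\tilde\bfg_m = (\bfg_m, \bfh_m)^T$ where $\bfh_m \in \bbR^{n-2}$ collects the components with index $\geq 3$. Since the tropical signs of the indices $1,2$ stabilize to $-$ for $t\geq 2$ (as recorded in \eqref{eq:ts1} and used already in \eqref{eq:gmut2}), the $g$-vector mutation formula \eqref{eq:gmut1} gives, for $m\geq 2$,
\begin{align*}
\tilde\bfg_{m+2} = -\tilde\bfg_m + \bigl(\textstyle\sum_{j}[-\varepsilon\, b_{j,k;t}]_+\,\tilde\bfg_j\bigr),
\end{align*}
but the only $g$-vectors appearing on the right with nonzero coefficient are $\tilde\bfg_m$ and $\tilde\bfg_{m+1}$ exactly as in \eqref{eq:gmut2} (the sum over $j$ collapses because $b_{jk}=0$ for the relevant $j\geq 3$ when $k\in\{1,2\}$ only if that is actually the case — more carefully, one checks that the mutated exchange matrix entries $b_{jk;t}$ for $j\geq 3$, $k\in\{1,2\}$ are the ones governing these coefficients, and these are determined by $\tilde B_0$ and the rank 2 mutation data). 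Thus $\bfh_m$ satisfies the same two-step recursion \eqref{eq:gmut2} as $\bfg_m$, namely $\bfh_{m+2} = -\bfh_m + a\,\bfh_{m+1}$ for $m$ odd and $\bfh_{m+2} = -\bfh_m + b\,\bfh_{m+1}$ for $m$ even, but with an \emph{inhomogeneous} term coming from the dependence of $b_{jk;t}$ on $m$ through the rank 2 $c$-vectors. Identifying that inhomogeneity precisely is the crux.

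Once the recursion is in hand, the argument is the standard one for linear recursions with a dominant eigenvalue. Group the steps in pairs: over one period $m\mapsto m+2$ the pair $(\bfg_{m+1},\bfg_m)$ evolves by a fixed matrix $M$ whose eigenvalues are $\lambda_\pm = \tfrac{ab-2\pm\sqrt{ab(ab-4)}}{2}$, so the dominant eigenvalue is $\lambda_+ > 1$ when $ab\geq 5$ and $\lambda_+=1$ when $ab=4$. The normalization $\alpha_m^{-1}$ grows like $\lambda_+^{-m/2}$ (up to bounded factors), since $\alpha_m = U_{m-3}$ or $\nu U_{m-3}$ and $U_m$ grows like $(\kappa/2+\sqrt{\kappa^2/4-1})^m = \lambda_+^{m/2}$ by \eqref{eq:Up4}, \eqref{eq:Up5}. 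Meanwhile $\bfh_m$, solving an inhomogeneous version of the same recursion, also grows at most like $\lambda_+^{m/2}$ times a polynomial — and in the affine case $ab=4$, where $\lambda_+=1$, one must check the inhomogeneous solution grows at most polynomially while $\alpha_m$ grows linearly, so $\alpha_m^{-1}\bfh_m$ still converges; here the precise form of the inhomogeneity (it should be built from bounded combinations of the rank 2 $c$-vectors, which themselves stay polynomially bounded in the affine case) is what one needs. Therefore $\alpha_m^{-1}\bfh_m$ has a finite limit, establishing that each component of $\tilde\bfv$ is finite; the same argument applies verbatim to $\tilde\bfg'_m$, using that the boxed tropical signs stabilize to $+$ for $t\leq 0$ and the recursion \eqref{eq:gmut3}.

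For the second assertion, suppose $ab=4$. In the rank 2 picture we already have $\bfv=\bfv'$ (stated right after \eqref{eq:v1}). It remains to show the extra components of $\tilde\bfv$ and $\tilde\bfv'$ agree. The cleanest route is to observe that when $ab=4$ the matrix $M$ has $\lambda_+=1$ as its only eigenvalue with a nontrivial Jordan block; the limit $\tilde\bfv = \lim \alpha_m^{-1}\tilde\bfg_m$ picks out a specific vector in the generalized eigenspace, and this vector is determined by the rank 2 data together with the fixed linear constraints coming from $\tilde B_0$. Concretely, I would argue that $\tilde\bfv$ (resp.\ $\tilde\bfv'$) is the unique ray in the closure of the forward (resp.\ backward) $G$-cones, and that in the affine case these two rays must coincide because the forward and backward sequences of $g$-vectors, after normalization, are governed by the \emph{same} limiting linear dynamics when $ab=4$ — indeed the forward recursion matrix and the backward recursion matrix are conjugate (swap $a\leftrightarrow b$ and reverse), and both degenerate to a single Jordan block at $ab=4$, forcing a common limit direction in $\bbR^n$. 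The main obstacle is pinning down the inhomogeneous term in the recursion for $\bfh_m$ and verifying it does not spoil convergence — in the non-affine case this is automatic from the spectral gap, but in the affine case it requires checking that the driving term grows strictly slower than $\alpha_m$, which is where one must use the explicit rank 2 formulas for $C_t$ rather than soft estimates.
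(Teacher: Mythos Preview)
Your proposal has a genuine gap at the crucial step. You correctly identify that the extra components $\bfh_m$ satisfy the same two-step recursion as $\bfg_m$ with an inhomogeneous forcing term, and you note that ``identifying that inhomogeneity precisely is the crux'' --- but then you do not identify it, and instead fall back on a soft spectral argument that does not close. The forcing term at step $m$ is $[-\varepsilon_{k;t}\,b_{jk;t}]_+$ for $j\geq 3$, $k\in\{1,2\}$; the entries $b_{jk;t}$ (in rank 3 these are the $c_t,d_t$ of the paper) themselves satisfy a Chebyshev-type recursion and hence grow at the \emph{same} exponential rate $\lambda_+^{m/2}$ as $\alpha_m$. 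When a linear recursion is forced at its dominant eigenvalue, the particular solution picks up an extra polynomial factor, so $\bfh_m$ grows like $m\,\lambda_+^{m/2}$ and $\alpha_m^{-1}\bfh_m$ diverges like $m$. Your own phrasing (``grows at most like $\lambda_+^{m/2}$ times a polynomial'') concedes exactly this, yet you then assert convergence. In the affine case the problem is worse: $\lambda_+=1$, $\alpha_m$ grows linearly, and a polynomially-growing forcing could make $\bfh_m$ quadratic.

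What rescues the proposition --- and what the paper's proof does --- is that the forcing term is in fact nonzero only \emph{finitely many times}. After reducing to rank~3, the paper invokes the explicit sign-pattern analysis of $(\mathrm{sgn}(c_t),\mathrm{sgn}(d_t))$ from \cite{Gekhtman19}, splits into six types according to the signs of $(c_0,d_0)$ and the slope $-d_0/c_0$, and in each type shows that $\mathrm{sgn}(b_{jk;t})=\varepsilon_{k;t}$ for all but a bounded set of $t$. Hence $[-\varepsilon_{k;t}\,b_{jk;t}]_+=0$ eventually, the recursion becomes homogeneous, and the third component is an explicit linear combination of $\alpha_m,\beta_m$ (formulas \eqref{eq:type1g1}--\eqref{eq:type431g2}). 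Convergence and the affine-case equality $\tilde\bfv=\tilde\bfv'$ then follow by direct computation in each type. Your Jordan-block argument for the affine equality is also insufficient on its own: a common spectral structure does not force the forward and backward limits to coincide without controlling the initial conditions and the inhomogeneities, which again requires the case analysis.
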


We only consider the mutations along the subtree $\bbT_2$ for the indices 1 and 2.
So, by \eqref{eq:gmut1}, for $j\geq 3$, the mutation of the $j$th component of $\tilde \bfg_m$ and $\tilde \bfg'_m$ only depends on
the $j$th row of the exchange matrix $\tilde B_t$.
Also, by \eqref{eq:bmut1}, the mutation of the $j$th row of $\tilde B_t$ only depends on the first, the second, and the $j$th row of $B_t$.
So, to prove Proposition \ref{prop:v1}, it is enough to concentrate on the rank 3 situation.
Thus, we may assume that the initial matrix $\tilde B_0$ has the form
\begin{align}
\label{eq:B0}
\tilde B_0
=
\begin{pmatrix}
0 & - b & - bc_0
\\
a & 0 & - ad_0
\\
c_0 & d_0 & 0
\end{pmatrix}
\quad
(a,\, b\, \in \bbZ_{>0},\, ab\geq 4;\, c_0,\, d_0\in \bbZ),
\end{align}
where the skew-symmetrizer is choosen as $D=\mathrm{diag}(a,b,ab)$.
In \eqref{eq:gmut1}, the third column of the exchange matrix is irrelevant.
So, the choice of $D$ is irrelevant.
In the same convention for the rank 2 case, we identify $\bbT_2$ with $\bbZ$.
Then, for $t\in \bbZ$, the exchange matrix at $t$ is given by
(omitting the third column)
\begin{align}
\tilde B_t
=
\begin{pmatrix}
0 & (-1)^{t+1} b 
\\
(-1)^t a & 0 
\\
c_t & d_t 
\end{pmatrix}
\end{align}
where $c_t$ and $d_t$ are integers determined by the matrix mutation \eqref{eq:bmut1} along $\bbT_2$.

In the rest of this section, we prove Proposition \ref{prop:v1}.
To utilize the mutation formula \eqref{eq:gmut1} of $g$-vectors,
 it is necessary to know the explicit expressions (and  the signs)
of $c_t$ and $d_t$.
Fortunately, this was already obtained in \cite{Gekhtman19}.
The result crucially depends on the signs of $c_0$ and $d_0$.
So, we separate it into six types (Types 1, 2, 3, 4-1, 4-2, and 4-3).

\subsection{Type 1. {$c_0,\, d_0 \geq 0$}.}
This is the easiest case.
We omit the trivial case $(c_0,d_0)=\bfzero$.
According to \cite[Prop.~3.1]{Gekhtman19},
the sign-pattern $(\rmsgn(c_t), \rmsgn(d_t))$ completely coincides with the pattern of the tropical signs
$(\varepsilon_{1;t}, \varepsilon_{2;t})$.
If $c_t$ or $d_t$ is zero,
we conveniently set its sign to the corresponding tropical sign. (This is harmless because it does not
contribute to the mutation \eqref{eq:gmut1} anyway.)
This coincidence implies the following important consequence.
In \eqref{eq:gmut1}, we have $[-\varepsilon_{k;t} b_{jk;t}]_+ =0$ if
$\rmsgn({b_{jk;t})} = \varepsilon_{k;t}$.
Thus, \emph{$c_t$ and $d_t$ never contribute to the mutations for $\bfg_m$ and $\bfg'_m$.}
So, we have
\begin{align}
\label{eq:type1g1}
\tilde \bfg_m = 
\begin{pmatrix}
\bfg_m
\\
0
\end{pmatrix}
,
\quad
\tilde \bfg'_m = 
\begin{pmatrix}
\bfg'_m
\\
0
\end{pmatrix},
\end{align}
and
\begin{align}
\tilde \bfv = 
\begin{pmatrix}
\bfv
\\
0
\end{pmatrix}
,
\quad
\tilde \bfv' = 
\begin{pmatrix}
\bfv'
\\
0
\end{pmatrix}.
\end{align}
Therefore, the claim in Proposition \ref{prop:v1} holds.

\begin{ex}
\label{ex:type1}
We continue to consider the case $(a,b)=(3,2)$.
All examples in this section were calculated by computer with Quiver Mutation \cite{Keller08c}
to confirm the obtained formulas.
Let $(c_0,d_0)=(2,2)$. Then, we have
\begin{gather*}
\tilde\bfg_1=
\begin{pmatrix}
-1
\\
0
\\
0
\end{pmatrix}
,
\quad
\tilde\bfg_2=
\begin{pmatrix}
0
\\
-1
\\
0
\end{pmatrix}
,
\quad
\tilde\bfg_3=
\begin{pmatrix}
1
\\
-3
\\
0
\end{pmatrix}
,
\quad
\tilde\bfg_4=
\begin{pmatrix}
2
\\
-5
\\
0
\end{pmatrix}
,
\quad
\tilde\bfg_5=
\begin{pmatrix}
5
\\
-12
\\
0
\end{pmatrix}
,
\\
\tilde\bfg'_1=
\begin{pmatrix}
2
\\
-1
\\
0
\end{pmatrix}
,
\quad
\tilde\bfg'_2=
\begin{pmatrix}
5
\\
-3
\\
0
\end{pmatrix}
,
\quad
\tilde\bfg'_3=
\begin{pmatrix}
8
\\
-5
\\
0
\end{pmatrix}
,
\quad
\tilde\bfg'_4=
\begin{pmatrix}
19
\\
-12
\\
0
\end{pmatrix}
,
\quad
\tilde\bfg'_5=
\begin{pmatrix}
30
\\
-19
\\
0
\end{pmatrix}
.
\end{gather*}
\end{ex}

\subsection{Type 2. {$c_0> 0,\, d_0 < 0$}.}

According to \cite[Prop.~3.1]{Gekhtman19},
the sign-pattern $(\rmsgn(c_t), \rmsgn(d_t))$ is given as follows:
\begin{align}
\label{eq:bsign1}
\begin{tabular}{wc{30pt}|wc{15pt}wc{10pt}wc{10pt}wc{10pt}wc{10pt}|wc{10pt}|wc{10pt}wc{10pt}wc{10pt}wc{10pt}wc{10pt}wc{10pt}wc{15pt}}
$t$ &  & $-4$ & $-3$& $-2$ & $-1$ & $0$ & $1$ & $2$ & $3$ & $4$& $5$ & $6$
\\
\hline
$\rmsgn(c_t)$ & $\cdots$ & $-$ & \bp & $-$ & \bp & \cp & $-$ & \cm & $+$ & \cm & $+$ & \cm & $\cdots$
\\
$\rmsgn(d_t)$ & $\cdots$ & \bp & $-$ & \bp & $+$ & \bm & \cm & $+$ & \cm & $+$& \cm & $+$ & $\cdots$
\end{tabular}
\end{align}
If $c_t$ or $d_t$ is zero, we conveniently set its sign as above.
(The same applies to the other cases.)

Comparing them with the tropical signs \eqref{eq:ts1}, we find
\begin{align}
\rmsgn(d_1)\neq \varepsilon_{2;1},
\quad
\rmsgn(d_0)\neq \varepsilon_{2;0}.
\end{align}
First, we consider the forward mutations.
We have
\begin{align}
\tilde \bfg_1
=\begin{pmatrix}
\bfg_{1}
\\
0
\end{pmatrix}
=
\begin{pmatrix}
-1
\\
0
\\
0
\end{pmatrix}
,
\quad
\tilde \bfg_{2}
=\begin{pmatrix}
\bfg_{2}
\\
0
\end{pmatrix}
+ [-d_1]_+ \bfe_3 =
\begin{pmatrix}
0
\\
-1
\\
-d_1
\end{pmatrix}
\end{align}
By \eqref{eq:bmut1},
we have
\begin{align}
d_1 = d_0 + c_0[- b]_+ + [-c_0]_+ (-b) = d_0.
\end{align}
In the rest, $\tilde\bfg_m$ mutate exactly in the same way as $\bfg_m$.
Since the mutation \eqref{eq:gmut2} is linear in $\bfg_m$, we have
\begin{align}
\label{eq:type2g1}
\tilde \bfg_m =
\begin{pmatrix}
\alpha_m
\\
\beta_m
\\
d_0 \beta_m
\end{pmatrix}
\quad
(m\geq 1)
.
\end{align}
Similarly, for the backward mutations, we have
\begin{align}
\tilde \bfg'_{1} = 
\begin{pmatrix}
\bfg'_1
\\
0
\end{pmatrix}
 + [-d_0]_+ \bfe_3 =
\begin{pmatrix}
b
\\
-1
\\
-d_0
\end{pmatrix}
,
\quad
\tilde \bfg'_{2} = -  \bfe_1 + a\tilde \bfg'_1
=
\begin{pmatrix}
ab - 1
\\
-a
\\
-a d_0
\end{pmatrix}
.
\end{align}
In the rest, $\tilde\bfg'_m$ mutate exactly in the same way as $\bfg'_m$.
Then, by the linearity, we have
\begin{align}
\label{eq:type2g2}
\tilde \bfg'_m =
\begin{pmatrix}
\alpha'_m
\\
\beta'_m
\\
d_0 \beta'_m
\end{pmatrix}
\quad
(m\geq 1)
.
\end{align}
Thus, we obtain
\begin{align}
\tilde \bfv = 
\begin{pmatrix}
\bfv
\\
d_0v_2
\end{pmatrix}
,
\quad
\tilde \bfv' = 
\begin{pmatrix}
\bfv'
\\
d_0v'_2
\end{pmatrix}
.
\end{align}
Therefore, the claim in Proposition \ref{prop:v1} holds.

\begin{ex}
\label{ex:type2}
Let $(c_0,d_0)=(2,-2)$. Then, we have
\begin{gather*}
\tilde\bfg_1=
\begin{pmatrix}
-1
\\
0
\\
0
\end{pmatrix}
,
\quad
\tilde\bfg_2=
\begin{pmatrix}
0
\\
-1
\\
2
\end{pmatrix}
,
\quad
\tilde\bfg_3=
\begin{pmatrix}
1
\\
-3
\\
6
\end{pmatrix}
,
\quad
\tilde\bfg_4=
\begin{pmatrix}
2
\\
-5
\\
10
\end{pmatrix}
,
\quad
\tilde\bfg_5=
\begin{pmatrix}
5
\\
-12
\\
24
\end{pmatrix}
,
\\
\tilde\bfg'_1=
\begin{pmatrix}
2
\\
-1
\\
2
\end{pmatrix}
,
\quad
\tilde\bfg'_2=
\begin{pmatrix}
5
\\
-3
\\
6
\end{pmatrix}
,
\quad
\tilde\bfg'_3=
\begin{pmatrix}
8
\\
-5
\\
10
\end{pmatrix}
,
\quad
\tilde\bfg'_4=
\begin{pmatrix}
19
\\
-12
\\
24
\end{pmatrix}
,
\quad
\tilde\bfg'_5=
\begin{pmatrix}
30
\\
-19
\\
38
\end{pmatrix}
.
\end{gather*}
\end{ex}

\subsection{Type 3. {$c_0,\, d_0 \leq 0$}.}

We omit the trivial case $(c_0,d_0)=\bfzero$.
According to \cite[Prop.~3.1]{Gekhtman19},
the sign-pattern $(\rmsgn(c_t), \rmsgn(d_t))$ is given as follows:
\begin{align}
\begin{tabular}{wc{30pt}|wc{15pt}wc{10pt}wc{10pt}wc{10pt}wc{10pt}|wc{10pt}|wc{10pt}wc{10pt}wc{10pt}wc{10pt}wc{10pt}wc{10pt}wc{15pt}}
$t$ &  & $-4$ & $-3$& $-2$ & $-1$ & $0$ & $1$ & $2$ & $3$ & $4$& $5$ & $6$
\\
\hline
$\rmsgn(c_t)$ & $\cdots$ & $-$ & \bp & $+$ & \bm & \cm & $+$ & \cm & $+$ & \cm & $+$ & \cm & $\cdots$
\\
$\rmsgn(d_t)$ & $\cdots$ & \bp & $-$ & \bp & $+$ & \bm & \cm & $+$ & \cm & $+$& \cm & $+$ & $\cdots$
\end{tabular}
\end{align}
Comparing them with the tropical signs \eqref{eq:ts1}, we find
\begin{align}
\rmsgn(c_0)\neq \varepsilon_{1;0},
\quad
\rmsgn(d_1)\neq \varepsilon_{2;1},
\quad
\rmsgn(d_0)\neq \varepsilon_{2;0},
\quad
\rmsgn(c_{-1})\neq \varepsilon_{1;-1}.
\end{align}
First, consider the forward mutations.
We have
\begin{gather}
\tilde \bfg_1 = -\bfe_1 + [-c_0]_+ \bfe_3
=
\begin{pmatrix}
-1
\\
0
\\
-c_0
\end{pmatrix},
\quad
\tilde \bfg_2 = - \bfe_2 + [-d_1]_+ \bfe_3
=
\begin{pmatrix}
0
\\
-1
\\
-d_1
\end{pmatrix},
\\
d_1 = d_0 + c_0[- b]_+ + [-c_0]_+ (-b) = d_0 + bc_0 \leq 0.
\end{gather}
In the rest, $\tilde\bfg_m$ mutate exactly in the same way as $\bfg_m$.
Then,
by the linearity, we have
\begin{align}
\label{eq:type3g1}
\tilde \bfg_m=
\begin{pmatrix}
\bfg_m
\\
c_0 \alpha_m + (d_0 + bc_0)\beta_m
\end{pmatrix}
\quad
(m\geq 1)
.
\end{align}
Similarly, for the backward mutations, we have
\begin{gather}
\tilde \bfg'_1 =
 \begin{pmatrix}
\bfg'_1
\\
0
\end{pmatrix}
 + [-d_0]_+ \bfe_3
=
\begin{pmatrix}
b
\\
-1
\\
-d_0
\end{pmatrix}
,
\\
\tilde \bfg'_2 = - \bfe_1 + a  \tilde \bfg'_1 + [-c_{-1}]_+ \bfe_3
=
\begin{pmatrix}
ab -1
\\
-a
\\
-c_{-1} -a d_0
\end{pmatrix}
,
\\
c_{-1} = c_0 + c_0[a ]_+ + [-c_0]_+ a = c_0.
\end{gather}
In the rest, $\tilde\bfg'_m$ mutate exactly in the same way as $\bfg'_m$.
Then,
by the linearity, we have
\begin{align}
\label{eq:type3g2}
\tilde \bfg'_m=
\begin{pmatrix}
\bfg'_m
\\
c_0 \alpha'_m + (d_0 + bc_0)\beta'_m
\end{pmatrix}
\quad
(m\geq 1)
.
\end{align}
Thus, we obtain
\begin{align}
\tilde \bfv = 
\begin{pmatrix}
\bfv
\\
c_0+
(d_0+bc_0)v_2
\end{pmatrix}
,
\quad
\tilde \bfv' = 
\begin{pmatrix}
\bfv'
\\
c_0+(d_0+bc_0)v'_2
\end{pmatrix}.
\end{align}
Therefore, the claim in Proposition \ref{prop:v1} holds.

\begin{ex}
\label{ex:type3}
Let $(c_0,d_0)=(-2,-2)$. We have $d_0 + b c_0  = -6$, and
\begin{gather*}
\tilde\bfg_1=
\begin{pmatrix}
-1
\\
0
\\
2
\end{pmatrix}
,
\quad
\tilde\bfg_2=
\begin{pmatrix}
0
\\
-1
\\
6
\end{pmatrix}
,
\quad
\tilde\bfg_3=
\begin{pmatrix}
1
\\
-3
\\
16
\end{pmatrix}
,
\quad
\tilde\bfg_4=
\begin{pmatrix}
2
\\
-5
\\
26
\end{pmatrix}
,
\quad
\tilde\bfg_5=
\begin{pmatrix}
5
\\
-12
\\
62
\end{pmatrix}
,
\\
\tilde\bfg'_1=
\begin{pmatrix}
2
\\
-1
\\
2
\end{pmatrix}
,
\quad
\tilde\bfg'_2=
\begin{pmatrix}
5
\\
-3
\\
8
\end{pmatrix}
,
\quad
\tilde\bfg'_3=
\begin{pmatrix}
8
\\
-5
\\
14
\end{pmatrix}
,
\quad
\tilde\bfg'_4=
\begin{pmatrix}
19
\\
-12
\\
34
\end{pmatrix}
,
\quad
\tilde\bfg'_5=
\begin{pmatrix}
30
\\
-19
\\
54
\end{pmatrix}
.
\end{gather*}
\end{ex}

\subsection{Type 4. {$c_0< 0,\, d_0 > 0$}.}
We separate it into three subcases.
\begin{align}
\label{eq:case41}
&\text{Type 4-1:}\quad
\frac{ab-\sqrt{ab(ab-4)}}{2a}
\leq  -\frac{d_0}{c_0}
\leq
\frac{ab+\sqrt{ab(ab-4)}}{2a}
,
\\
\label{eq:case42}
&\text{Type 4-2:}\quad
\frac{ab+\sqrt{ab(ab-4)}}{2a}
<
  -\frac{d_0}{c_0}
,
\\
\label{eq:case43}
&\text{Type 4-3:}\quad
  -\frac{d_0}{c_0}
<
\frac{ab-\sqrt{ab(ab-4)}}{2a}
.
\end{align}
The conditions are equivalent to
\begin{align}
\label{eq:type41cond1}
&\text{Type 4-1:}\quad
abc_0d_0 +ad_0^2 + bc_0^2 \leq 0,
\\
\label{eq:type42cond1}
&\text{Type 4-2:}\quad
abc_0d_0 +ad_0^2 + bc_0^2 > 0,
\quad
2d_0 + bc_0 >0,
\\
\label{eq:type43cond1}
&\text{Type 4-3:}\quad
abc_0d_0 +ad_0^2 + bc_0^2 > 0,
\quad
2d_0 + bc_0 <0.
\end{align}

\subsubsection{Type 4-1}
In this case,
according to \cite[Prop.~3.2]{Gekhtman19},
the sign-pattern $(\rmsgn(c_t), \rmsgn(d_t))$ is given as follows:
\begin{align}
\begin{tabular}{wc{30pt}|wc{15pt}wc{10pt}wc{10pt}wc{10pt}wc{10pt}|wc{10pt}|wc{10pt}wc{10pt}wc{10pt}wc{10pt}wc{10pt}wc{10pt}wc{15pt}}
$t$ &  & $-4$ & $-3$& $-2$ & $-1$ & $0$ & $1$ & $2$ & $3$ & $4$& $5$ & $6$
\\
\hline
$\rmsgn(c_t)$ & $\cdots$ & $-$ & \bp & $-$ & \bp & \cm & $+$ & \cm & $+$ & \cm & $+$ & \cm & $\cdots$
\\
$\rmsgn(d_t)$ & $\cdots$ & \bp & $-$ & \bp & $-$ & \bp & \cm & $+$ & \cm & $+$& \cm & $+$ & $\cdots$
\end{tabular}
\end{align}
Comparing them with the tropical signs \eqref{eq:ts1}, we find
\begin{align}
\rmsgn(c_0)\neq \varepsilon_{1;0},
\quad
\rmsgn(d_1)\neq \varepsilon_{2;1}.
\end{align}
For the forward mutations, this is the same as Case 3. Thus, we have
\begin{align}
\label{eq:type41g1}
\tilde \bfg_m=
\begin{pmatrix}
\bfg_m
\\
c_0 \alpha_m + (d_0 + bc_0)\beta_m
\end{pmatrix}
\quad
(m\geq 1)
.
\end{align}
For the backward mutations, this is the same as Case 1. Thus, we have
\begin{align}
\label{eq:type41g2}
\tilde \bfg'_m = 
\begin{pmatrix}
\bfg'_m
\\
0
\end{pmatrix}
\quad
(m\geq 1)
.
\end{align}
So, we obtain
\begin{align}
\tilde \bfv = 
\begin{pmatrix}
\bfv
\\
c_0+
(d_0+bc_0)v_2
\end{pmatrix}
,
\quad
\tilde \bfv' = 
\begin{pmatrix}
\bfv'
\\
0
\end{pmatrix}.
\end{align}
If $ab=4$, the condition \eqref{eq:case41} reduces to
\begin{align}
\frac{2}{a} = -\frac{d_0}{c_0} = \frac{b}{2}.
\end{align}
Since $v_2=-a/2$, we have $c_0+
(d_0+bc_0)v_2=0$. Thus, we have $\bfv=\bfv'$.
Therefore, the claim in Proposition \ref{prop:v1} holds.

\begin{ex}
\label{ex:type41}
Let $(c_0,d_0)=(-2,2)$.
We have
\begin{align}
\frac{ab-\sqrt{ab(ab-4)}}{2a}
=\frac{3-\sqrt{3}}{3}= 0.422\cdots,
\quad
\frac{ab+\sqrt{ab(ab-4)}}{2a}
=
\frac{3+\sqrt{3}}{3}=1.577\cdots.
\end{align}
Therefore, $(c_0,d_0)=(-2,2)$ satisfies the condition.
We have $d_0+bc_0=-2$,
and
\begin{gather*}
\tilde\bfg_1=
\begin{pmatrix}
-1
\\
0
\\
2
\end{pmatrix}
,
\quad
\tilde\bfg_2=
\begin{pmatrix}
0
\\
-1
\\
2
\end{pmatrix}
,
\quad
\tilde\bfg_3=
\begin{pmatrix}
1
\\
-3
\\
4
\end{pmatrix}
,
\quad
\tilde\bfg_4=
\begin{pmatrix}
2
\\
-5
\\
6
\end{pmatrix}
,
\quad
\tilde\bfg_5=
\begin{pmatrix}
5
\\
-12
\\
14
\end{pmatrix}
,
\\
\tilde\bfg'_1=
\begin{pmatrix}
2
\\
-1
\\
0
\end{pmatrix}
,
\quad
\tilde\bfg'_2=
\begin{pmatrix}
5
\\
-3
\\
0
\end{pmatrix}
,
\quad
\tilde\bfg'_3=
\begin{pmatrix}
8
\\
-5
\\
0
\end{pmatrix}
,
\quad
\tilde\bfg'_4=
\begin{pmatrix}
19
\\
-12
\\
0
\end{pmatrix}
,
\quad
\tilde\bfg'_5=
\begin{pmatrix}
30
\\
-19
\\
0
\end{pmatrix}
.
\end{gather*}
\end{ex}

\subsubsection{Type 4-2}
In this case, there is some $N\geq 0$ such that
\begin{align}
\label{eq:case42N}
\frac{\nu U_{N+1}}{U_N} \leq - \frac{d_0}{c_0} < \frac{\nu U_N}{U_{N-1}},
\end{align}
where we ignore the second inequality for $N=0$.
(In \cite[Prop.~3.3]{Gekhtman19}, $N=0$ is excluded, but it should have been included.)
We need to separate it into two subtypes.

\medskip\noindent
\emph{Type 4-2-1}. $N$ is odd.
According to \cite[Prop.~3.3]{Gekhtman19},
the sign-pattern $(\rmsgn(c_t), \rmsgn(d_t))$ is given as follows:
\begin{align}
\begin{tabular}{wc{30pt}|wc{15pt}wc{10pt}wc{10pt}|wc{10pt}|wc{10pt}wc{10pt}wc{10pt}wc{10pt}wc{10pt}wc{10pt}wc{10pt}wc{10pt}wc{15pt}}
$t$ &  & $-2$ & $-1$& $0$ & $1$ &  & $\scriptstyle N$ & $\scriptstyle N+1$ & $\scriptstyle N+2$ & $\scriptstyle N+3$&  & 
\\
\hline
$\rmsgn(c_t)$ & $\cdots$ & $-$ & \bp & \cm & $+$ & $\cdots$ & $+$ & \cp & $-$ & \cm & $+$ & \cm & $\cdots$
\\
$\rmsgn(d_t)$ & $\cdots$ & \bp & $-$ & \bp & \cm & $\cdots$ & \cm & $+$ & \cp & $-$& \cm & $+$ & $\cdots$
\end{tabular}
\end{align}
Comparing them with the tropical signs \eqref{eq:ts1}, we find
\begin{align}
\rmsgn(c_0)\neq \varepsilon_{1;0},
\quad
\rmsgn(d_1)\neq \varepsilon_{2;1},
\quad
\rmsgn(c_{N+1})\neq \varepsilon_{1;N+1},
\quad
\rmsgn(d_{N+2})\neq \varepsilon_{2;N+2}.
\end{align}
Therefore, $\tilde \bfg'_m$ ($m \geq 1$) and $\tilde\bfg_m$ ($1\leq m \leq N+1$)
are the same as Case 4-1.
So, let us determine $\tilde\bfg_m$ ($m\geq N+2$).
We first note the result by \cite[(3.22)]{Gekhtman19},
\begin{align}
c_{N+1}& = c_0 U_{N+1} + d_0 \nu^{-1} U_N \geq 0,
\\
d_{N+1}& = - c_0 \nu U_{N} - d_0 U_{N-1} >0,
\end{align}
where the inequalities match the condition \eqref{eq:case42N}.
We also have
\begin{align}
d_{N+2} &= d_{N+1} + c_{N+1}[-b]_+ + [-c_{N+1}]_+ (-b) = d_{N+1}.
\end{align}
Below, we use the recursion
\begin{align}
U_{N-1} + U_{N+1} = \kappa U_N,
\end{align}
which is obtained by \eqref{eq:Urel1}, repeatedly without mentioning.
Then, by \eqref{eq:gseq1} and \eqref{eq:type41g1},  we have
\begin{align*}
\tilde \bfg_{N+2} & =
-\tilde \bfg_{N} + a \tilde \bfg_{N+1} + c_{N+1} \bfe_3
\\ 
&=
\begin{pmatrix}
\bfg_{N+2}
\\
c_0  U_{N-1} - (d_0 + bc_0) \nu^{-1} U_N
\end{pmatrix}
+ c_{N+1} \bfe_3
=
\begin{pmatrix}
\bfg_{N+2}
\\
0
\end{pmatrix}
.
\end{align*}
Here, an unexpected and pleasant cancellation occurred.
Similarly, 
\begin{align*}
\tilde \bfg_{N+3} &=
-\tilde \bfg_{N+1} + b \tilde \bfg_{N+2} + d_{N+2} \bfe_3
\\
&=
\begin{pmatrix}
\bfg_{N+3}
\\
- c_0  \nu U_{N-2} + (d_0 + bc_0)  U_{N-1}
\\
\end{pmatrix}
+
 d_{N+1} \bfe_3
 =
\begin{pmatrix}
\bfg_{N+3}
\\
0
\end{pmatrix}
.
\end{align*}
Thus, we have
\begin{align}
\label{eq:type421g1}
\tilde \bfg_{m}
=
\begin{pmatrix}
\bfg_{m}
\\
0
\end{pmatrix}
\quad
(m \geq N+2).
\end{align}
So, we obtain
\begin{align}
\tilde \bfv = 
\begin{pmatrix}
\bfv
\\
0
\end{pmatrix}
,
\quad
\tilde \bfv' = 
\begin{pmatrix}
\bfv'
\\
0
\end{pmatrix}.
\end{align}
Therefore, the claim in Proposition \ref{prop:v1} holds.

\begin{ex}
\label{ex:type421}
Let $N=3$ and $(c_0,d_0)=(-100,159)$.
We have
\begin{align}
\frac{\nu U_4}{U_3}= \frac{19}{12}= 1.583\cdots,
\quad
\frac{\nu U_3}{U_2}= \frac{8}{5}= 1.6.
\end{align}
Therefore, $(c_0,d_0)=(-100,159)$ satisfies the condition.
We have $d_0+bc_0=-41$,
and
\begin{align*}
\tilde\bfg_1&=
\begin{pmatrix}
-1
\\
0
\\
100
\end{pmatrix}
,
\quad
\tilde\bfg_2=
\begin{pmatrix}
0
\\
-1
\\
41
\end{pmatrix}
,
\quad
\tilde\bfg_3=
\begin{pmatrix}
1
\\
-3
\\
23
\end{pmatrix}
,
\quad
\tilde\bfg_4=
\begin{pmatrix}
2
\\
-5
\\
5
\end{pmatrix}
,
\quad
\tilde\bfg_5=
\begin{pmatrix}
5
\\
-12
\\
0
\end{pmatrix}
,
\\
\tilde\bfg_6&=
\begin{pmatrix}
8
\\
-19
\\
0
\end{pmatrix}
,
\quad
\tilde\bfg_7=
\begin{pmatrix}
19
\\
-45
\\
0
\end{pmatrix}
,
\\
\tilde\bfg'_1&=
\begin{pmatrix}
2
\\
-1
\\
0
\end{pmatrix}
,
\quad
\tilde\bfg'_2=
\begin{pmatrix}
5
\\
-3
\\
0
\end{pmatrix}
,
\quad
\tilde\bfg'_3=
\begin{pmatrix}
8
\\
-5
\\
0
\end{pmatrix}
,
\quad
\tilde\bfg'_4=
\begin{pmatrix}
19
\\
-12
\\
0
\end{pmatrix}
,
\quad
\tilde\bfg'_5=
\begin{pmatrix}
30
\\
-19
\\
0
\end{pmatrix}
.
\end{align*}
\end{ex}

\medskip\noindent
\emph{Type 4-2-2}. $N$ is even.
According to \cite[Prop.~3.3]{Gekhtman19},
the sign-pattern $(\rmsgn(c_t), \rmsgn(d_t))$ is given as follows:
\begin{align}
\begin{tabular}{wc{30pt}|wc{15pt}wc{10pt}wc{10pt}|wc{10pt}|wc{10pt}wc{10pt}wc{10pt}wc{10pt}wc{10pt}wc{10pt}wc{10pt}wc{10pt}wc{15pt}}
$t$ &  & $-2$ & $-1$& $0$ & $1$ &  &  &$\scriptstyle N$ & $\scriptstyle N+1$ & $\scriptstyle N+2$ & $\scriptstyle N+3$&   
\\
\hline
$\rmsgn(c_t)$ & $\cdots$ & $-$ & \bp & \cm & $+$ & $\cdots$ & $+$ & \cm & $+$ & \cp & $-$ & \cm & $\cdots$
\\
$\rmsgn(d_t)$ & $\cdots$ & \bp & $-$ & \bp & \cm & $\cdots$ & \cm & $+$ & \cp & $-$& \cm & $+$ & $\cdots$
\end{tabular}
\end{align}
Comparing them with the tropical signs \eqref{eq:ts1}, we find
\begin{align}
\rmsgn(c_0)\neq \varepsilon_{1;0},
\quad
\rmsgn(d_1)\neq \varepsilon_{2;1},
\quad
\rmsgn(d_{N+1})\neq \varepsilon_{2;N+1},
\quad
\rmsgn(c_{N+2})\neq \varepsilon_{1;N+2}.
\end{align}
Therefore, $\tilde\bfg'_m$ ($m \geq 1$) and $\tilde\bfg_m$ ($1\leq m \leq N+1$)
are the same as Case 4-1.
So, let us determine $\tilde\bfg_m$ ($m\geq N+2$).
We first note the result by \cite[(3.23)]{Gekhtman19},
\begin{align}
c_{N+1}& = - c_0 U_{N} -  d_0 \nu^{-1} U_{N-1} > 0,
\\
d_{N+1}& =  c_0 \nu U_{N+1} + d_0 U_{N} \geq 0,
\end{align}
where the inequalities match the condition \eqref{eq:case42N}.
We also have
\begin{align}
c_{N+2} &= c_{N+1} + d_{N+1}[-a]_+ + [-d_{N+1}]_+ (-a) = c_{N+1}.
\end{align}
Then, by \eqref{eq:gseq1} and \eqref{eq:type41g1},  we have
\begin{align*}
\tilde \bfg_{N+2} & =
-\tilde \bfg_{N} + b \tilde \bfg_{N+1} + d_{N+1} \bfe_3
\\ 
&=
\begin{pmatrix}
\bfg_{N+2}
\\
c_0  \nu U_{N-1} - (d_0 + bc_0)  U_N
\end{pmatrix}
+ d_{N+1} \bfe_3
=
\begin{pmatrix}
\bfg_{N+2}
\\
0
\end{pmatrix}
.
\end{align*}
Similarly, 
\begin{align*}
\tilde \bfg_{N+3} &=
-\tilde \bfg_{N+1} + a \tilde \bfg_{N+2} + c_{N+2} \bfe_3
\\
&=
\begin{pmatrix}
\bfg_{N+3}
\\
- c_0   U_{N-2} + (d_0 + bc_0)  \nu^{-1} U_{N-1}
\\
\end{pmatrix}
+
 c_{N+1} \bfe_3
 =
\begin{pmatrix}
\bfg_{N+3}
\\
0
\end{pmatrix}
.
\end{align*}
Thus, we have
\begin{align}
\tilde \bfg_{m}
=
\begin{pmatrix}
\bfg_{m}
\\
0
\end{pmatrix}
\quad
(m \geq N+2).
\end{align}
So, we obtain
\begin{align}
\tilde \bfv = 
\begin{pmatrix}
\bfv
\\
0
\end{pmatrix}
,
\quad
\tilde \bfv' = 
\begin{pmatrix}
\bfv'
\\
0
\end{pmatrix}.
\end{align}
Therefore, the claim in Proposition \ref{prop:v1} holds.

\begin{ex}
\label{ex:type422}
Let $N=4$ and $(c_0,d_0)=(-50,79)$.
We have
\begin{align}
\frac{\nu U_5}{U_4}= \frac{30}{19}= 1.578\cdots,
\quad
\frac{\nu U_4}{U_3}= \frac{19}{12}= 1.583\cdots.
\end{align}
Therefore, $(c_0,d_0)=(-50, 79)$ satisfies the condition.
We have $d_0+bc_0=-21$,
and
\begin{align*}
\tilde\bfg_1&=
\begin{pmatrix}
-1
\\
0
\\
50
\end{pmatrix}
,
\quad
\tilde\bfg_2=
\begin{pmatrix}
0
\\
-1
\\
21
\end{pmatrix}
,
\quad
\tilde\bfg_3=
\begin{pmatrix}
1
\\
-3
\\
13
\end{pmatrix}
,
\quad
\tilde\bfg_4=
\begin{pmatrix}
2
\\
-5
\\
5
\end{pmatrix}
,
\quad
\tilde\bfg_5=
\begin{pmatrix}
5
\\
-12
\\
2
\end{pmatrix}
,
\\
\tilde\bfg_6&=
\begin{pmatrix}
8
\\
-19
\\
0
\end{pmatrix}
,
\quad
\tilde\bfg_7=
\begin{pmatrix}
19
\\
-45
\\
0
\end{pmatrix}
,
\\
\tilde\bfg'_1&=
\begin{pmatrix}
2
\\
-1
\\
0
\end{pmatrix}
,
\quad
\tilde\bfg'_2=
\begin{pmatrix}
5
\\
-3
\\
0
\end{pmatrix}
,
\quad
\tilde\bfg'_3=
\begin{pmatrix}
8
\\
-5
\\
0
\end{pmatrix}
,
\quad
\tilde\bfg'_4=
\begin{pmatrix}
19
\\
-12
\\
0
\end{pmatrix}
,
\quad
\tilde\bfg'_5=
\begin{pmatrix}
30
\\
-19
\\
0
\end{pmatrix}
.
\end{align*}
\end{ex}

\subsubsection{Type 4-3}
In this case, there is some $N\geq 0$ such that
\begin{align}
\label{eq:case43N}
\frac{\nu U_{N-1}}{U_N} \leq - \frac{d_0}{c_0} < \frac{\nu U_N}{U_{N+1}}. 
\end{align}
(In \cite[Prop.~3.4]{Gekhtman19}, $N=0$ is excluded, but it should be included.)
We need to separate it into two subtypes.

\medskip\noindent
\emph{Type 4-3-1}. $N$ is odd.
According to \cite[Prop.~3.4]{Gekhtman19},
the sign-pattern $(\rmsgn(c_t), \rmsgn(d_t))$ is given as follows:
\begin{align}
\begin{tabular}{wc{30pt}|wc{15pt}wc{10pt}wc{10pt}wc{17pt}wc{17pt}wc{17pt}wc{17pt}wc{10pt}wc{10pt}wc{10pt}wc{10pt}wc{10pt}wc{15pt}}
$t$ &  & $$ & $$& $\scriptstyle -N-3$ & $\scriptstyle -N-2$ &$\scriptstyle -N-1$  & $\scriptstyle -N$ &  & $-1$ & $0$ & $1$  & $2$
\\
\hline
$\rmsgn(c_t)$ & $\cdots$ & $-$ & \bp & $+$ & \bm & $-$ & \bp & $\cdots$ & \bp & \cm & $+$ & \cm & $\cdots$
\\
$\rmsgn(d_t)$ & $\cdots$ & \bp & $-$ & \bp & $+$ & \bm & $-$ & $\cdots$ & $-$ & \bp & \cm & $+$ & $\cdots$
\end{tabular}
\end{align}
Comparing them with the tropical signs \eqref{eq:ts1}, we find
\begin{align}
\rmsgn(c_0)\neq \varepsilon_{1;0},
\quad
\rmsgn(d_1)\neq \varepsilon_{2;1},
\quad
\rmsgn(d_{-N-1})\neq \varepsilon_{2;-N-1},
\quad
\rmsgn(c_{-N-2})\neq \varepsilon_{1;-N-2}.
\end{align}
Therefore, $\tilde\bfg_m$ ($m \geq 1$) and $\tilde\bfg'_m$ ($1\leq m \leq N+1$)
are the same as Case 4-1.
So, let us determine $\tilde\bfg'_m$ ($m\geq N+2$).
We first note the result by \cite[(3.24)]{Gekhtman19},
\begin{align}
c_{-N-1}& = -c_0 U_{N-1} - d_0 \nu^{-1} U_N \leq  0,
\\
d_{-N-1}& =  c_0 \nu U_{N} + d_0 U_{N+1} < 0,
\end{align}
where the inequalities match the condition \eqref{eq:case43N}.
We also have
\begin{align}
c_{-N-2} &= c_{-N-1} + d_{-N-1}[a]_+ + [-d_{-N-1}]_+ a = c_{-N-1}.
\end{align}
Then, by \eqref{eq:gseq2} and \eqref{eq:type41g2},  we have
\begin{align*}
\tilde \bfg'_{N+2} & =
-\tilde \bfg'_{N} + b \tilde \bfg'_{N+1} - d_{-N-1} \bfe_3
\\ 
&=
\begin{pmatrix}
\bfg'_{N+2}
\\
- c_0 \nu U_{N} - d_0 U_{N+1} 
\end{pmatrix}
=
\begin{pmatrix}
\bfg'_{N+2}
\\
c_0 \alpha'_{N+2}
+(d_0+ bc_0)\beta'_{N+2}
\end{pmatrix}
.
\end{align*}
Similarly, 
\begin{align*}
\tilde \bfg'_{N+3} &=
-\tilde \bfg'_{N+1} + a \tilde \bfg_{N+2} - c_{-N-2} \bfe_3
\\
&=
\begin{pmatrix}
\bfg'_{N+3}
\\
- c_0 a \nu U_{N} - d_0 a U_{N+1} 
 + c_0 U_{N-1} + d_0 \nu^{-1} U_N
\\
\end{pmatrix}
\end{align*}
A little calculation shows that
\begin{align*}
&
- c_0 a \nu U_{N} - d_0 a U_{N+1} 
 + c_0 U_{N-1} + d_0 \nu^{-1} U_N
 \\
=&\
c_0 U_{N+3} + (d_0 + bc_0) (-\nu^{-1} U_{N+2})
=
c_0 \alpha'_{N+3} +  (d_0 + bc_0) \beta'_{N+3}.
\end{align*}
Thus, we have
\begin{align}
\label{eq:type431g2}
\tilde \bfg'_{m}
=
\begin{pmatrix}
\bfg'_{m}
\\
c_0 \alpha'_{m} +  (d_0 + bc_0) \beta'_{m}
\end{pmatrix}
\quad
(m \geq N+2).
\end{align}
So, we obtain
\begin{align}
\tilde \bfv = 
\begin{pmatrix}
\bfv
\\
c_0 + (d_0 + bc_0) v_2
\end{pmatrix}
,
\quad
\tilde \bfv' = 
\begin{pmatrix}
\bfv'
\\
c_0 + (d_0 + bc_0) v'_2
\end{pmatrix}.
\end{align}
Therefore, the claim in Proposition \ref{prop:v1} holds.

\begin{ex}
\label{ex:type431}
Let $N=3$ and $(c_0,d_0)=(-50,21)$.
We have
\begin{align}
\frac{\nu U_2}{U_3}= \frac{5}{12}= 0.416\cdots,
\quad
\frac{\nu U_3}{U_4}= \frac{8}{19}= 0.421\cdots.
\end{align}
Therefore, $(c_0,d_0)=(-50,21)$ satisfies the condition.
We have $d_0+bc_0=-79$,
and
\begin{align*}
\tilde\bfg_1&=
\begin{pmatrix}
-1
\\
0
\\
50
\end{pmatrix}
,
\quad
\tilde\bfg_2=
\begin{pmatrix}
0
\\
-1
\\
79
\end{pmatrix}
,
\quad
\tilde\bfg_3=
\begin{pmatrix}
1
\\
-3
\\
187
\end{pmatrix}
,
\quad
\tilde\bfg_4=
\begin{pmatrix}
2
\\
-5
\\
295
\end{pmatrix}
,
\quad
\tilde\bfg_5=
\begin{pmatrix}
5
\\
-12
\\
698
\end{pmatrix}
,
\\
\tilde\bfg'_1&=
\begin{pmatrix}
2
\\
-1
\\
0
\end{pmatrix}
,
\quad
\tilde\bfg'_2=
\begin{pmatrix}
5
\\
-3
\\
0
\end{pmatrix}
,
\quad
\tilde\bfg'_3=
\begin{pmatrix}
8
\\
-5
\\
0
\end{pmatrix}
,
\quad
\tilde\bfg'_4=
\begin{pmatrix}
19
\\
-12
\\
0
\end{pmatrix}
,
\quad
\tilde\bfg'_5=
\begin{pmatrix}
30
\\
-19
\\
1
\end{pmatrix}
,
\\
\tilde\bfg'_6&=
\begin{pmatrix}
71
\\
-45
\\
5
\end{pmatrix}
,
\quad
\tilde\bfg'_7=
\begin{pmatrix}
112
\\
-71
\\
9
\end{pmatrix}
.
\end{align*}
\end{ex}

\medskip\noindent
\emph{Type 4-3-2}. $N$ is even.
According to \cite[Prop.~3.4]{Gekhtman19},
the sign-pattern $(\rmsgn(c_t), \rmsgn(d_t))$ is given as follows:
\begin{align}
\begin{tabular}{wc{30pt}|wc{15pt}wc{10pt}wc{10pt}wc{17pt}wc{17pt}wc{17pt}wc{17pt}wc{10pt}wc{10pt}wc{10pt}wc{10pt}wc{10pt}wc{15pt}}
$t$ &  & $$ & $\scriptstyle -N-3$ & $\scriptstyle -N-2$ &$\scriptstyle -N-1$  & $\scriptstyle -N$ & & & $-1$ & $0$ & $1$  & $2$
\\
\hline
$\rmsgn(c_t)$ & $\cdots$ & $-$ & \bp & $+$ & \bm & $-$ & \bp & $\cdots$ & \bp & \cm & $+$ & \cm & $\cdots$
\\
$\rmsgn(d_t)$ & $\cdots$ & \bp & $+$ & \bm & $-$ & \bp & $-$ & $\cdots$ & $-$ & \bp & \cm & $+$ & $\cdots$
\end{tabular}
\end{align}
Comparing them with the tropical signs \eqref{eq:ts1}, we find
\begin{align}
\rmsgn(c_0)\neq \varepsilon_{1;0},
\quad
\rmsgn(d_1)\neq \varepsilon_{2;1},
\quad
\rmsgn(c_{-N-1})\neq \varepsilon_{1;-N-1},
\quad
\rmsgn(d_{-N-2})\neq \varepsilon_{2;-N-2}.
\end{align}
Therefore, $\tilde\bfg_m$ ($m \geq 1$) and $\tilde\bfg'_m$ ($1\leq m \leq N+1$)
are the same as Case 4-1.
So, let us determine $\tilde\bfg'_m$ ($m\geq N+2$).
We first note the result by \cite[(3.25)]{Gekhtman19},
\begin{align}
c_{-N-1}& =  c_0 U_{N} + d_0 \nu^{-1} U_{N+1} < 0,
\\
d_{-N-1}& =  - c_0 \nu U_{N-1} - d_0 U_{N} \leq 0,
\end{align}
where the inequalities match the condition \eqref{eq:case43N}.
We also have
\begin{align}
d_{-N-2} &= d_{-N-1} + c_{-N-1}[b]_+ + [-c_{-N-1}]_+ b = d_{-N-1}.
\end{align}
Then, by \eqref{eq:gseq2} and \eqref{eq:type41g2},  we have
\begin{align*}
\tilde \bfg'_{N+2} & =
-\tilde \bfg'_{N} + a \tilde \bfg'_{N+1} - c_{-N-1} \bfe_3
\\ 
&=
\begin{pmatrix}
\bfg'_{N+2}
\\
- c_0  U_{N} - d_0 \nu^{-1} U_{N+1} 
\end{pmatrix}
=
\begin{pmatrix}
\bfg'_{N+2}
\\
c_0 \alpha'_{N+2}
+(d_0+ bc_0)\beta'_{N+2}
\end{pmatrix}
.
\end{align*}
Similarly, 
\begin{align*}
\tilde \bfg'_{N+3} &=
-\tilde \bfg'_{N+1} + b \tilde \bfg_{N+2} - d_{-N-2} \bfe_3
\\
&=
\begin{pmatrix}
\bfg'_{N+3}
\\
- c_0 b  U_{N} - d_0 b \nu^{-1} U_{N+1} 
 + c_0 \nu U_{N-1} + d_0  U_N
\\
\end{pmatrix}
\end{align*}
A little calculation shows that
\begin{align*}
&
- c_0 b  U_{N} - d_0 b \nu^{-1} U_{N+1} 
 + c_0 \nu U_{N-1} + d_0  U_N
 \\
=&\
c_0 \nu U_{N+3} + (d_0 + bc_0) (- U_{N+2})
=
c_0 \alpha'_{N+3} +  (d_0 + bc_0) \beta'_{N+3}.
\end{align*}
Thus, we have
\begin{align}
\tilde \bfg'_{m}
=
\begin{pmatrix}
\bfg'_{m}
\\
c_0 \alpha'_{m} +  (d_0 + bc_0) \beta'_{m}
\end{pmatrix}
\quad
(m \geq N+2).
\end{align}
So, we obtain
\begin{align}
\tilde \bfv = 
\begin{pmatrix}
\bfv
\\
c_0 + (d_0 + bc_0) v_2
\end{pmatrix}
,
\quad
\tilde \bfv' = 
\begin{pmatrix}
\bfv'
\\
c_0 + (d_0 + bc_0) v'_2
\end{pmatrix}.
\end{align}
Therefore, the claim in Proposition \ref{prop:v1} holds.

\begin{ex}
\label{ex:type432}
Let $N=4$ and $(c_0,d_0)=(-500,211)$.
We have
\begin{align}
\frac{\nu U_3}{U_4}= \frac{8}{19}= 0.421\cdots,
\quad
\frac{\nu U_4}{U_5}= \frac{19}{45}= 0.422\cdots.
\end{align}
Therefore, $(c_0,d_0)=(-500,211)$ satisfies the condition.
We have $d_0+bc_0=-789$,
and
\begin{align*}
\tilde\bfg_1&=
\begin{pmatrix}
-1
\\
0
\\
500
\end{pmatrix}
,
\quad
\tilde\bfg_2=
\begin{pmatrix}
0
\\
-1
\\
789
\end{pmatrix}
,
\quad
\tilde\bfg_3=
\begin{pmatrix}
1
\\
-3
\\
1867
\end{pmatrix}
,
\quad
\tilde\bfg_4=
\begin{pmatrix}
2
\\
-5
\\
2945
\end{pmatrix}
,
\quad
\tilde\bfg_5=
\begin{pmatrix}
5
\\
-12
\\
6968
\end{pmatrix}
,
\\
\tilde\bfg'_1&=
\begin{pmatrix}
2
\\
-1
\\
0
\end{pmatrix}
,
\quad
\tilde\bfg'_2=
\begin{pmatrix}
5
\\
-3
\\
0
\end{pmatrix}
,
\quad
\tilde\bfg'_3=
\begin{pmatrix}
8
\\
-5
\\
0
\end{pmatrix}
,
\quad
\tilde\bfg'_4=
\begin{pmatrix}
19
\\
-12
\\
0
\end{pmatrix}
,
\quad
\tilde\bfg'_5=
\begin{pmatrix}
30
\\
-19
\\
0
\end{pmatrix}
,
\\
\tilde\bfg'_6&=
\begin{pmatrix}
71
\\
-45
\\
5
\end{pmatrix}
,
\quad
\tilde\bfg'_7=
\begin{pmatrix}
112
\\
-71
\\
19
\end{pmatrix}
.
\end{align*}
\end{ex}

\begin{rem}
We see from the sign-pattern $(\rmsgn(c_t), \rmsgn(d_t))$ that
for Type 4-1 the matrix $B_t$ is cyclic for any $t\in \bbT_2$, while 
for all other types $B_t$ is acyclic for some $t \in \bbT_2$.
\end{rem}

\section{Incompleteness of $G$-fans of infinite type}

As an immediate application of Proposition \ref{prop:v1}, we prove the incompleteness of the $G$-fans of infinite type.

The following fact is well-known.

\begin{thm}[{\cite[Thm.~10.6]{Reading12}, \cite[Thm.~4.1]{Reading18}}]
\label{thm:complete1}
If a cluster pattern $\bfSigma(B)$ is of finite type,
the $G$-fan $\Delta(B)$ is complete.
\end{thm}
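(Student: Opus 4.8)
The plan is to prove that the support $|\Delta(B)|=\bigcup_{t\in\bbT_n}\sigma(G_t)$ equals all of $\bbR^n$ by showing that it is both open and closed. Since $\bfSigma(B)$ is of finite type, it has only finitely many distinct seeds, hence only finitely many distinct $G$-matrices and $G$-cones; because each $G$-matrix is unimodular, every maximal cone $\sigma(G_t)$ is full-dimensional and simplicial, with rays $\bbR_{\geq 0}\bfg_{i;t}$ $(i=1,\dots,n)$. Thus $|\Delta(B)|$ is a finite union of closed cones, so it is closed, and it has nonempty interior since it contains the interior of $\sigma(\bfe_1,\dots,\bfe_n)$. Once we also show it is open, connectedness of $\bbR^n$ forces $|\Delta(B)|=\bbR^n$ (the case $n=1$ being immediate).

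The key step is the pseudomanifold property: every wall (codimension-one face) of a maximal cone is a face of exactly two maximal cones. A wall of the simplicial cone $\sigma(G_t)$ is necessarily of the form $W_k(t):=\sigma(\{\bfg_{i;t}:i\neq k\})$ for some $k$. Let $t'$ be the $k$-adjacent vertex. By \eqref{eq:gmut1} we have $\bfg_{i;t'}=\bfg_{i;t}$ for all $i\neq k$, so $\sigma(G_{t'})$ also has $W_k(t)$ as a wall; moreover \eqref{eq:gmut1} writes $\bfg_{k;t'}=-\bfg_{k;t}+h$ with $h$ in the hyperplane $H:=\mathrm{span}\,W_k(t)$, whence $\det G_{t'}=-\det G_t$ and $\bfg_{k;t'}$ lies strictly on the side of $H$ opposite to $\bfg_{k;t}$. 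So at least one further maximal cone meets $W_k(t)$, on the other side of $H$. Conversely, since $\Delta(B)$ is a fan, distinct maximal cones have disjoint interiors; if two full-dimensional cones shared the facet $W_k(t)$ and lay on the same side of $H$, then near a relative-interior point of $W_k(t)$ each would coincide with the same half-ball, a contradiction. Hence at most one maximal cone lies on each side, and $W_k(t)$ belongs to exactly two maximal cones.

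Openness of $|\Delta(B)|$ now follows. The topological boundary $\partial|\Delta(B)|$ is contained in the union of all walls, but near the relative interior of any wall the two maximal cones meeting along it already cover a full neighborhood of $\bbR^n$; hence $\partial|\Delta(B)|$ is contained in the union of the faces of codimension $\geq 2$ and so has dimension $\leq n-2$. Since removing a finite union of polyhedral cones of dimension $\leq n-2$ from $\bbR^n$ leaves it connected, while $\bbR^n\setminus\partial|\Delta(B)|$ would contain both the nonempty open set $\mathrm{int}\,|\Delta(B)|$ and its nonempty open complement if $|\Delta(B)|\neq\bbR^n$, we conclude $\partial|\Delta(B)|=\varnothing$. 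Thus $|\Delta(B)|$ is open, and therefore equals $\bbR^n$.

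I expect the genuine obstacle to lie not in this topology but in the inputs it rests on: the finiteness of the seed set in finite type, and especially the fact that $\Delta(B)$ is a true fan (so maximal cones have disjoint interiors), which relies on the column sign-coherence of $C$-matrices and the structural results on the cluster complex recalled in the introduction. An alternative and more classical route, close to the original proofs, is to identify $\Delta(B)$ in finite type with a Cambrian fan of the associated finite Coxeter group --- a coarsening of the Coxeter fan, hence complete --- equivalently, with the normal fan of a generalized associahedron, whose completeness is automatic. Either way, the real content is structural information about finite type cluster patterns beyond the rank $2$ computations recalled above.
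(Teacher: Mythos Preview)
The paper does not actually prove Theorem~\ref{thm:complete1}; it is quoted as a known result with citations to \cite{Reading12,Reading18} and immediately used as a foil for the incompleteness theorem that follows. So there is no in-paper proof to compare against.

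Your argument is correct and is a clean topological route: finite type gives finitely many $G$-cones, the mutation formula \eqref{eq:gmut1} yields the pseudomanifold property (each codimension-one face lies in exactly two maximal cones, on opposite sides), and the boundary-of-codimension-$\geq 2$ trick then forces $|\Delta(B)|=\bbR^n$. You are also right to flag that the genuine weight sits in the assumption that $\Delta(B)$ is a fan (disjoint interiors of maximal cones); in finite type this was known before \cite{Gross14}, but in any case it is an external structural input, not something derivable from the rank~$2$ formulas collected in this paper. The alternative you mention---identifying $\Delta(B)$ with a Cambrian fan (a coarsening of the Coxeter arrangement, hence complete) or with the normal fan of a generalized associahedron---is precisely the approach of the cited references, so your direct pseudomanifold argument is a genuinely different and somewhat more elementary packaging of the same underlying facts.
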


We show the opposite statement.
\begin{thm}
\label{thm:incomplete1}
If a cluster pattern $\bfSigma(B)$ is of infinite type,
the $G$-fan $\Delta(B)$ is incomplete.
\end{thm}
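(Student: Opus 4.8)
\emph{Plan.} The plan is to exhibit one explicit direction of $\bbR^n$ that $|\Delta(B)|$ fails to cover; since a complete fan has support all of $\bbR^n$, this proves incompleteness. As in the beginning of Section~\ref{sec:lemv1}, since $\bfSigma(B)$ is of infinite type, \cite[Thm.~1.8]{Fomin03a} provides $\tilde B$ mutation-equivalent to $B$ with $|\tilde b_{ij}\tilde b_{ji}|\geq 4$ for some pair $i\neq j$; after relabelling we may take $\tilde B=\tilde B_0$ of the block form \eqref{eq:tB0} with $B_0$ the rank $2$ matrix \eqref{eq:B1}. Since $\Delta(B)$ and $\Delta(\tilde B)$ differ by a piecewise-linear homeomorphism of $\bbR^n$ (\cite[II.Prop.~2.25]{Nakanishi22a}), completeness is preserved, so it suffices to show $\Delta(\tilde B)$ is incomplete. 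By Proposition~\ref{prop:v1} the $g$-vectors $\tilde\bfg_m,\tilde\bfg'_m$ along $\bbT_2$ yield genuine limits $\tilde\bfv=\lim_m\alpha_m^{-1}\tilde\bfg_m$ and $\tilde\bfv'=\lim_m(\alpha'_m)^{-1}\tilde\bfg'_m$ in $\bbR^n$, with $\tilde\bfv=\tilde\bfv'$ when $ab=4$. I claim $\bbR_{>0}\tilde\bfv\cap|\Delta(\tilde B)|=\emptyset$.

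\emph{The subfan $\Delta_{12}(\tilde B)$ misses the ray.} The first two coordinates of $\tilde\bfv$ are exactly $\bfv$ of \eqref{eq:v1}. If $\tilde\bfv$ lay in a cone of $\Delta_{12}(\tilde B)$, then the projection $\bbR^n\to\bbR^2$ onto the first two coordinates (which kills $\bfe_3,\dots,\bfe_n$ and sends $\tilde\bfg_m,\tilde\bfg'_m$ to $\bfg_m,\bfg'_m$) would place $\bfv$ in a cone of the rank $2$ fan $\Delta(B_0)$. But by the monotone convergence of $\beta_m/\alpha_m$ and $\beta'_m/\alpha'_m$ (using \eqref{eq:Up3}, \eqref{eq:lim1}, \eqref{eq:lim2}) the direction $\bbR_{\geq 0}\bfv$ is met by none of the cones $\sigma(\bfe_1,\bfe_2)$, $\sigma(\bfg_1,\bfe_2)$, $\sigma(\bfe_1,\bfg'_1)$, $\sigma(\bfg_m,\bfg_{m+1})$, $\sigma(\bfg'_m,\bfg'_{m+1})$ --- it is the crack, respectively the Badlands, of Figure~\ref{fig:Gfan1}. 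So $\tilde\bfv$ lies in no cone of $\Delta_{12}(\tilde B)$; in particular, for $n=2$ we are already done.

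\emph{No other $G$-cone covers the ray.} Assume $\tilde\bfv\in\sigma$ for some $G$-cone $\sigma=\sigma(\tilde G_t)$; then $t\notin\bbT_2$. First, $\tilde\bfv\notin\mathrm{int}\,\sigma$: since $\alpha_m^{-1}\tilde\bfg_m\to\tilde\bfv$, for large $m$ the point $\alpha_m^{-1}\tilde\bfg_m$ would lie in $\mathrm{int}\,\sigma$, so the $1$-dimensional cone $\bbR_{\geq 0}\tilde\bfg_m$ of $\Delta(\tilde B)$ would meet $\mathrm{int}\,\sigma$; but two cones of a fan meet in a common face, and a face of $\sigma$ that meets $\mathrm{int}\,\sigma$ is $\sigma$ itself, forcing $\sigma\subseteq\bbR_{\geq 0}\tilde\bfg_m$, which is absurd for $n\geq 2$. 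Hence $\tilde\bfv$ lies in the relative interior of a proper face $\tau$ of $\sigma$, with $1\leq\dim\tau\leq n-1$, so $n\geq 3$. Now I would localize at $\tilde\bfv$ and argue with the intersection axiom: the rays $\bbR_{\geq 0}\tilde\bfg_m$ --- and, crucially, when $ab=4$ also the rays $\bbR_{\geq 0}\tilde\bfg'_m$, which approach $\bbR_{\geq 0}\tilde\bfv=\bbR_{\geq 0}\tilde\bfv'$ from the opposite side --- are infinitely many pairwise distinct cones of $\Delta(\tilde B)$ whose points converge to $\tilde\bfv\in\mathrm{relint}\,\tau$, and this accumulation should be shown incompatible with $\tau$ together with the $G$-cones incident to it. Reducing as in Section~\ref{sec:lemv1} to the rank $3$ case (where the coordinates $\geq 4$ are slaved to the first three) forces $\dim\tau\in\{1,2\}$, hence $\tau$ is shared by at most two $G$-cones, which is where the contradiction should emerge most transparently.

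\emph{Main obstacle.} The crux is this last step. The $G$-fan need not be locally finite along $\bbR_{\geq 0}\tilde\bfv$ (the rays $\bbR_{\geq 0}\tilde\bfg_m$ already pile up there), so one cannot merely invoke local finiteness; one must rule out by hand that a $G$-cone not arising from mutations at $1,2$ ``bridges the gap'' left open by $\Delta_{12}(\tilde B)$. The affine case $ab=4$ is the hardest, and there the equality $\tilde\bfv=\tilde\bfv'$ from Proposition~\ref{prop:v1} --- which squeezes $\bbR_{\geq 0}\tilde\bfv$ from both sides by rays of the fan --- is exactly the feature that makes this limiting ray a genuine hole in $|\Delta(\tilde B)|$.
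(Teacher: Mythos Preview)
Your setup is correct, and your argument that $\tilde\bfv\notin\mathrm{int}\,\sigma$ for any $G$-cone $\sigma$ is fine. But you correctly identify the gap yourself: you do not rule out that $\tilde\bfv$ lies on a proper face of some $G$-cone $\sigma$ with $t\notin\bbT_2$, and your sketch of how to close it does not go through. In particular, the reduction ``to the rank~3 case'' applies only to the $g$-vectors along $\bbT_2$ (that is how Proposition~\ref{prop:v1} is proved); it says nothing about $G$-cones at vertices $t\notin\bbT_2$, so you cannot force $\dim\tau\in\{1,2\}$ that way. Moreover, in the non-affine case $ab\geq 5$ the Badlands side of the wall is genuinely open territory, and your squeeze from one side gives no contradiction for a cone sitting in the Badlands with $\tilde\bfv$ on its boundary.

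The paper closes the gap with two different, but short, observations---and both rely on looking at the codimension-one wall $W=\sigma(\tilde\bfv,\bfe_3,\dots,\bfe_n)$ rather than the single ray $\bbR_{>0}\tilde\bfv$. Take a point $p$ in the \emph{relative interior} of $W$. (i)~If $ab=4$, then $\tilde\bfv=\tilde\bfv'$ and in a neighbourhood of $p$ the complement of $W$ is entirely contained in $|\Delta_{12}(\tilde B)|$; hence any full-dimensional $G$-cone $\sigma$ containing $p$ would have interior points in some cone of $\Delta_{12}(\tilde B)$, forcing $\sigma$ to equal that cone by the fan axiom---contradicting $p\notin|\Delta_{12}(\tilde B)|$. (ii)~If $ab\geq 5$, the hyperplane $\overline W=\mathrm{span}(W)$ has \emph{irrational} normal (since $\sqrt{ab(ab-4)}\notin\bbQ$). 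If $p\in\sigma$, the same overlap argument on the $\Delta_{12}$ side shows $\sigma$ must lie on the Badlands side of $\overline W$ near $p$, so $\overline W$ is a supporting hyperplane of $\sigma$ and $\sigma\cap\overline W$ is a face of $\sigma$. But every face of the rational polyhedral cone $\sigma$ that is $(n-1)$-dimensional spans a rational hyperplane, so $\sigma\cap\overline W$ has dimension $\leq n-2$; countably many such faces cannot cover the $(n-1)$-dimensional $\mathrm{relint}(W)$. The irrationality is the missing ingredient in your non-affine case, and moving from the ray $\bbR_{>0}\tilde\bfv$ to interior points of $W$ is what makes the affine squeeze two-sided.
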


\begin{figure}
\begin{tikzpicture}[scale=2]
\filldraw [fill=black!10, draw=white] (0,0) -- (-0.4,-0.7) -- (0.3,-0.7) -- (0,0);
\filldraw [fill=black!10, draw=white] (0,0) -- (0,1.4) -- (-0.55,0.55) -- (0,0);
\filldraw [fill=black!20, draw=white] (0,0) -- (0,1.4) -- (0.25,0.75) -- (0,0);
\draw(-1,0)--(1,0);
\draw(0,0)--(0,1.4);
\draw(0.7,0.7)--(-0.7,-0.7);
\draw(1.7,0.7)--(0.3,-0.7);
\draw(-0.3,0.7)--(-1.7,-0.7);
\draw(-1.7,-0.7)--(0.3,-0.7);
\draw(-0.3,0.7)--(1.7,0.7);
\draw(0,0)--(-0.4,-0.7);
\draw(0,0)--(0.3,-0.7);
\draw [dashed] (0.1,0.3)--(0.1,-0.23);
\draw [dashed] (-0.4,0.4)--(-0.4,-0.7);
\draw(0,0)--(-0.5,-0.7);
\draw(0,0)--(-0.44,-0.7);
\draw(0,0)--(0.36,-0.64);
\draw(0,0)--(0.45,-0.55);
\draw [->, very thick] (0,0)-- (-0.4,-0.7);
\draw [->, very thick] (0,0)-- (-0.4,0.4);
\draw [->, very thick] (0,0)-- (0.1,-0.23);
\draw [->, very thick] (0,0)-- (0.1,0.3);
\draw  (0,0)-- (-0.55,0.55);
\draw  (0,0)-- (0.25,0.75);
 \node at (-0.4,-0.85){\small $\bfv$};
 \node at (-0.5,0.3){\small $\tilde \bfv$};
 \node at (0,-0.35){\small $\bfv'$};
 \node at (0.25,0.35){\small $\tilde \bfv'$};
 \end{tikzpicture}
 \vskip-10pt
\caption{Boundaries of the subfan $\Delta_{12}(\tilde B)$.}
\label{fig:Gfan2}
\end{figure}
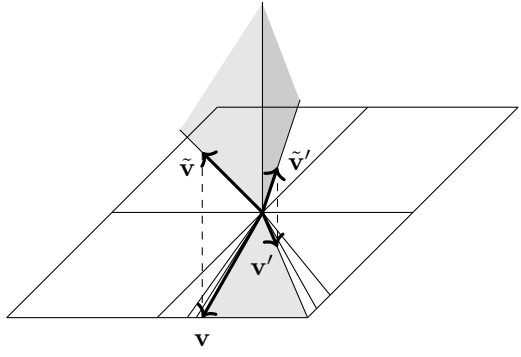

\begin{proof}
We consider the sitation in \eqref{eq:tB0}--\eqref{eq:tv1}.
Then,
thanks to Proposition \ref{prop:v1}, 
the codimen\-sion-one cones $\sigma(\tilde\bfv,  \bfe_3,\dots, \bfe_n)$ and $\sigma(\tilde\bfv',  \bfe_3,\dots, \bfe_n)$
are on the boundary of $\Delta_{12}(\tilde B)$. See Figure \ref{fig:Gfan2}.
(This is untrue if Proposition \ref{prop:v1} fails. For example, if $\tilde v_3=\pm \infty$ and other components have finite limits,
then we have the codimension-two cone $\sigma(\pm\bfe_3,\dots, \bfe_n)$ on the boundary instead of $\sigma(\tilde\bfv,  \bfe_3,\dots, \bfe_n)$.)
If $ab=4$ (affine type), then the common boundary $\sigma(\tilde\bfv,  \bfe_3,\dots, \bfe_n)$ (the crack) is codimension one.
So, it is not filled with other $G$-cones, which are full-dimensional.
Therefore, $\Delta(\tilde B)$ is incomplete.
If $ab\geq 5$ (non-affine type),
then the boundaries $\sigma(\tilde\bfv,  \bfe_3,\dots, \bfe_n)$ and $\sigma(\tilde\bfv',  \bfe_3,\dots, \bfe_n)$
have irrational normal vectors.
So, it is not covered with other $G$-cones, which have rational normal vectors.
Therefore, $\Delta(\tilde B)$ is incomplete.
This finishes the proof of Theorem \ref{thm:incomplete1}.
\end{proof}

\begin{rem}
\label{rem:incomplete1}
1. Alternatively,
one can deduce Theorem \ref{thm:incomplete1} from the result of
\cite[Thm.~5.1.1]{Muller15}
by considering the pullbacks of rank 2 scattering subdiagrams for the indices 1 and 2.
However,
the proof of \cite[Thm.~5.1.1]{Muller15} depends on the uniqueness of certain consistent scattering diagrams \cite[Thm.~1.2.1]{Gross14}.
Our approach is simpler and we only use the mutation formulas \eqref{eq:bmut1}--\eqref{eq:gmut1},
though we rely on the sign-coherence property of $c$-vectors proved by the scattering diagram method \cite{Gross14}.

2. In \cite[II.Prop.~2.18]{Nakanishi22a}, the author gave a proof of Theorem \ref{thm:incomplete1},
which turned out insufficient in detail.
Here, we replaced it with a proof with more quantitative details.
\end{rem}

\section{Pictures of rank 3 $G$-fans}
\label{sec:pictures1}

In this section, we present the pictures of the rank 3 $G$-fans for  Examples \ref{ex:type1}, \ref{ex:type2}, 
\ref{ex:type41}, \ref{ex:type421}, \ref{ex:type431}
and their variants
in Figures \ref{fig:type1}--\ref{fig:type4-3}.
These pictures match and visualize the behavior of $g$-vectors 
around the ray $\bbR_{\geq 0} \bfe_3$
classified in Section \ref{sec:lemv1}.

First, let us explain the pictures. More specific remarks on each picture will also be provided in the caption.
Addition information on the scattering diagram method can be found in \cite[\S III.6]{Nakanishi22a}.

(1)
We use the stereographic projection, which is commonly used to present rank 3 $G$-fans and scattering diagrams \cite{Muller15,Reading17}.
Namely, we first consider the projection of $G$-cones to the unit sphere $S^2$ in $\bbR^3$.
Then, we use the stereographic projection to the tangent plane at $(1,1,1)/\sqrt{3}$ from the antipode.
All pictures are topological; namely, they are not precise in metrics.

(2)
All examples in this section have the common skew-symmetrizer $D=\mathrm{diag}(3,2,6)$.
Accordingly, we consider the following inner product on $\bbR^3$:
\begin{align}
\label{eq:inner1}
(\bfa,\bfb)_D= \bfa^T D\bfb.
\end{align}
Then, the $c$- and $g$-vectors satisfy the duality \cite{Nakanishi22a}
\begin{align}
\label{eq:dualcg1}
(\bfc_{i;t}, \bfg_{j;t})_D =  d_i \delta_{ij}.
\end{align}
Each hyperplane $\bfe_i^{\perp}$ is presented as a circle.
See Figure \ref{fig:type1}.
Let
\begin{align}
H_i^{+}=\{ \bfa\in \bbR^3 \mid a_i \geq 0\},
\quad
H_i^{-}=\{ \bfa\in \bbR^3 \mid a_i \leq 0\}.
\end{align}
Under the projection, the half-space $H_i^{+}$ (resp. $H_i^{-}$) is the inside (resp. outside) of the circle $\bfe_i^{\perp}$.
In particular, the positive orthant $O_{+++}=H_1^{+}\cap H_2^{+} \cap H_3^{+}$ is 
 the triangle at the center.
The ray $\bbR_{\geq 0}\bfe_3$ is represented as the encircled vertex in the picture.
Let $v_3$ denote the vertex.
 The $g$-vectors $\tilde \bfg_m$ ($m\geq 2$) and $\tilde \bfg'_m$ ($m\geq 1$) studied in Section \ref{sec:lemv1}
 are in the orthant $O_{+-+}=H_1^{+}\cap H_2^{-} \cap H_3^{+}$,
 which is surrounded by bold edges.
 We are interested in the configurations of these $g$-vectors around the vertex $v_{3}$.
 
(3)
In general, a $G$-fan of rank 3 has a fractal nature due to the fractal structure of the $3$-regular tree $\bbT_3$.
Therefore, it is impossible to draw a complete picture.
We should be satisfied with a low-order approximation.

(4)
Let us explain briefly how we determine these pictures.
Each triagle corresponds to a $G$-cone.
Due to the duality \eqref{eq:dualcg1},
the normal vector of each edge of a triangle is a $c$-vector.
So, each $C$-matrix is identified with a $G$-cone.
For example, 
the initial $C$-matrix $I$ corresponds to the positive orthant $O_{+++}$.
By the mutation in directions 1, $2$ and $3$, we have the $C$-matrices
\begin{align}
\begin{pmatrix}
-1 & 0 & 0
\\
0 & 1 & 0 
\\
0 & 0 & 1
\end{pmatrix},
\quad
\begin{pmatrix}
1 & 0 & 0
\\
3 & -1 & 0 
\\
0 & 0 & 1
\end{pmatrix},
\quad
\begin{pmatrix}
1 & 0 & 0
\\
0 & 1 & 0
\\
2 & 2 & -1
\end{pmatrix}.
\end{align}
They correspond to the triangles that are adjacent to $O_{+++}$ in the picture.
We used Quiver Mutation \cite{Keller08c}
to calculate $C$-matrices.
They are the unshaded triangles in Figures \ref{fig:type1}
and \ref{fig:type1-closeup}.
Meanwhile, the shaded region is not explored.
In Figure \ref{fig:type1-closeup},
the fractal nature of the boundary of the shaded region already emerges.
By continuing the procedure, the boundary becomes more jagged,
but the region does not shrink drastically.
The dark-shaded region is
the union of the pullbacks of the Badlands of rank 2 scattering diagrams of infinite type
\cite{Muller15} around the vertex $v_i$ $(i=1$, 2, 3) representing the ray $\bbR_{\geq 0}\bfe_{i}$.
From the scattering diagram point of view,
the region is outside the $G$-fan
because
each boundary wall has an irrational normal vector.
(This is the proof of the incompleteness of the above $G$-fan based on the scattering diagram method
mentioned in Remark \ref{rem:incomplete1}.
But we did not use this fact to draw the pictures.)

(5) The red edges in Figure \ref{fig:type1-closeup} are drawn to 
clarify the structure of the boundary of the $G$-fan.
They are ``walls'' in the cluster scattering diagram
 in \cite{Gross14, Nakanishi22a}.
Continuing the fractal process is possible, in principle. However,
already in the next step, it becomes very hard to depict it because there will be so many edges to draw.

\clearpage

\begin{figure}
\includegraphics[width=320pt]{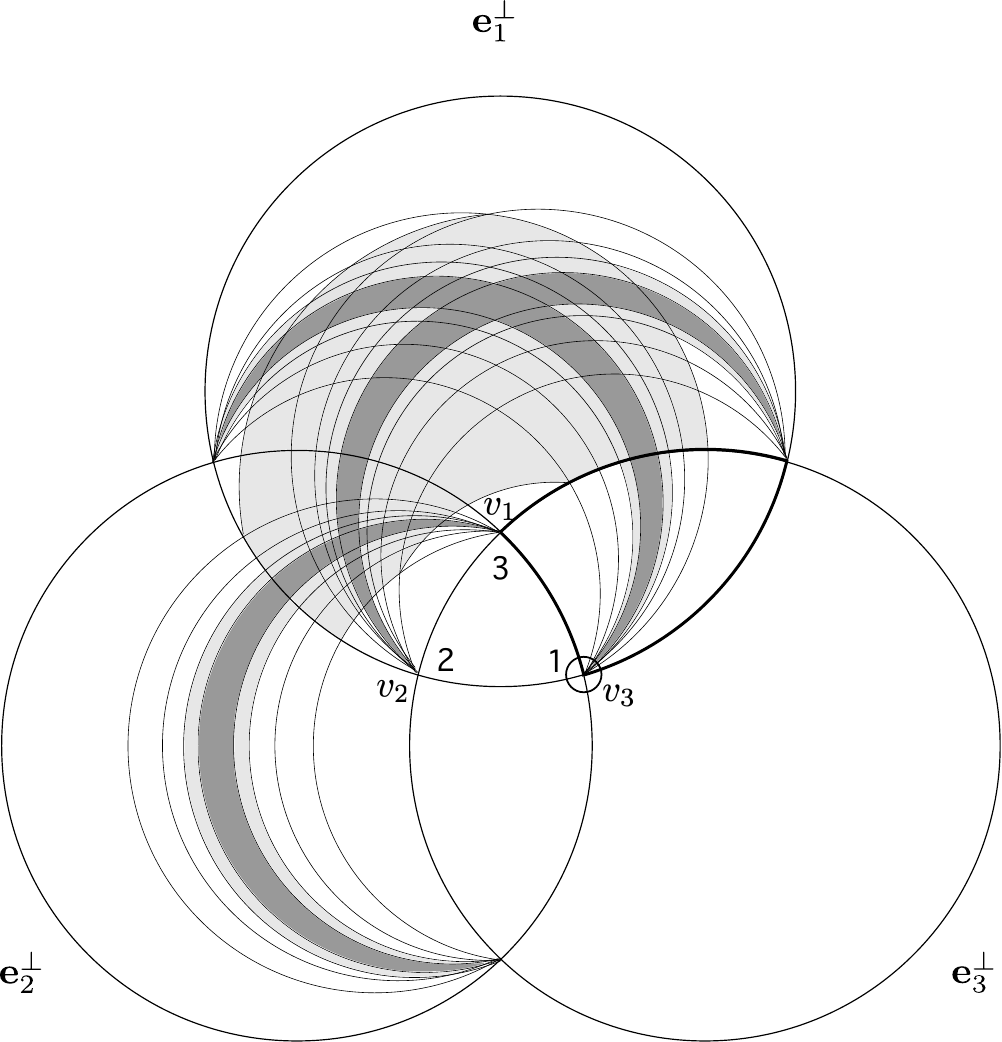}
\\
{\small
$
\tilde B=
\begin{pmatrix}
0 & - 2 & -4
\\
3 & 0 & -6
\\
2 & 2 & 0
\end{pmatrix}
$
}
\hskip30pt
\raisebox{-25pt}[45pt]
{
\begin{tikzpicture}[scale=0.8]
\draw (0,0) circle[radius=0.1];
\draw (2,0) circle[radius=0.1];
\draw (1,1.73) circle[radius=0.1];
\draw[{Latex[length=8pt,width=4pt]}-] (0.2,0)--(1.8,0);
\draw[{Latex[length=8pt,width=4pt]}-] (0.1,0.2)--(0.9,1.53);
\draw[{Latex[length=8pt,width=4pt]}-] (1.9,0.2)--(1.1,1.53);
\draw (1,0) node [below] {\small $(3,2)$};
\draw (0.4,0.85) node [left] {\small $(2,4)$};
\draw (1.6,0.85) node [right] {\small $(2,6)$};
\draw (0,0) node [below left] {\small $1$};
\draw (2,0) node [below right] {\small $2$};
\draw (1.1,1.73) node [right] {\small $3$};
\end{tikzpicture}
}
\caption{``The wing''.
The $G$-fan of Example \ref{ex:type1} for Type 1.
In the region $O_{+-+}$,
 every edge stemming from
the encircled vertex $v_3=\bbR_{\geq 0}\bfe_{3}$
reaches the boundary $\bfe_3^{\perp}$.
This visualizes the formula \eqref{eq:type1g1}.
The close-up of the region $H_1^{+}$
with a more detailed boundary is given in Figure \ref{fig:type1-closeup}.
The inscribed numbers $3$, $2$, and $1$ represent the types of the
vertices $v_1$, $v_2$, and $v_1$, respectively. They will be used in Section \ref{sec:global1}.
}
\label{fig:type1}
\end{figure}

\begin{figure}
\includegraphics[width=350pt]{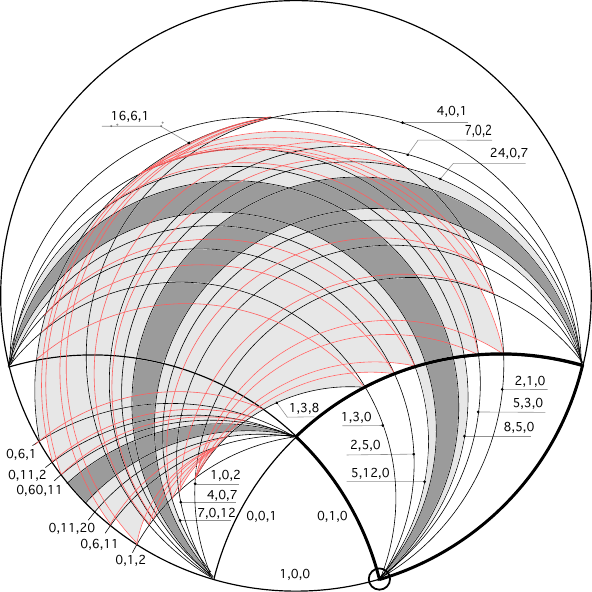}
\caption{
Close-up of the region $H_1^{+}$ in Figure \ref{fig:type1}
with a more detailed boundary.
The integer vector attached to each black edge represents its normal vector (the $c$-vector).
The added red edges (``walls'' in the cluster scattering diagram) clarify the structure of the boundary.
}
\label{fig:type1-closeup}
\end{figure}

\begin{figure}
\includegraphics[width=320pt]{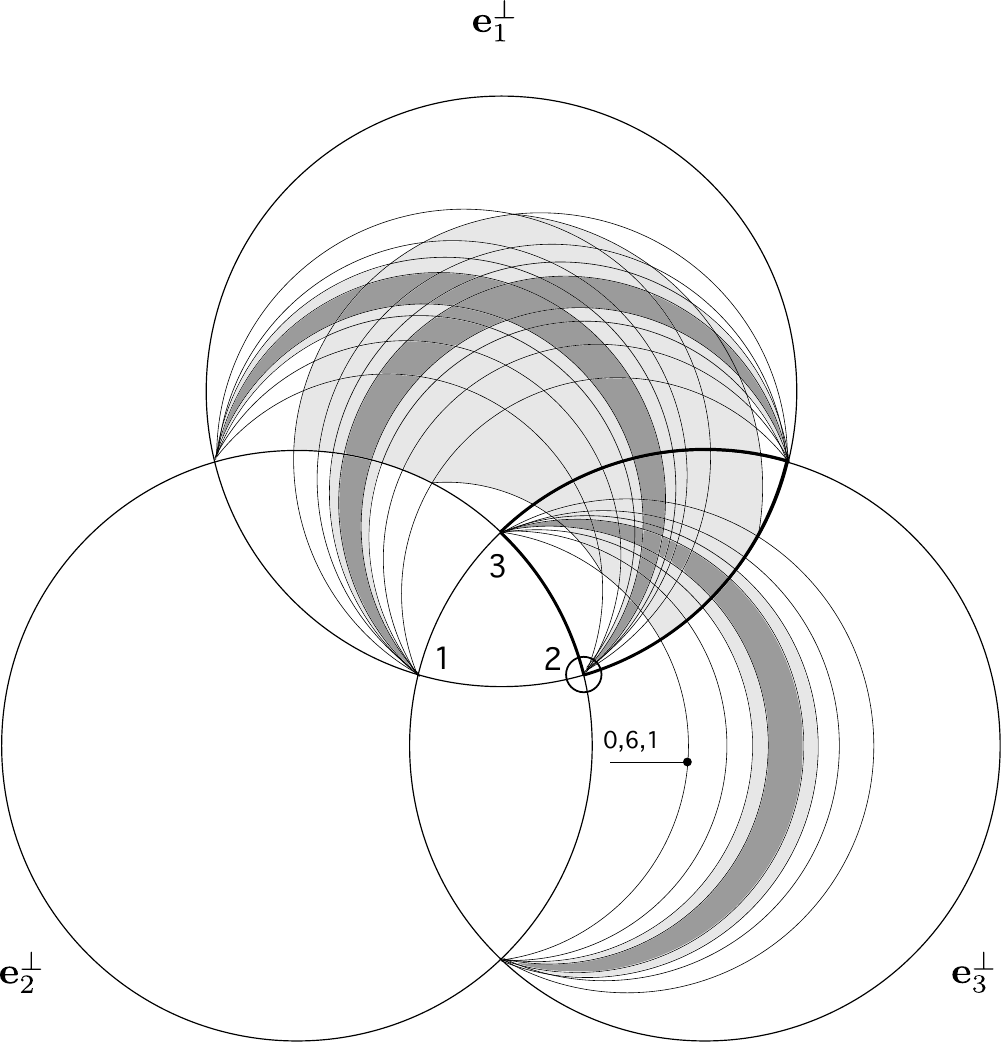}
\\
{
\small
$
\tilde B=
\begin{pmatrix}
0 & - 2 & -4
\\
3 & 0 & 6
\\
2 & -2 & 0
\end{pmatrix}
$
}
\hskip30pt
\raisebox{-25pt}[45pt]
{
\begin{tikzpicture}[scale=0.8]
\draw (0,0) circle[radius=0.1];
\draw (2,0) circle[radius=0.1];
\draw (1,1.73) circle[radius=0.1];
\draw[{Latex[length=8pt,width=4pt]}-] (0.2,0)--(1.8,0);
\draw[{Latex[length=8pt,width=4pt]}-] (0.1,0.2)--(0.9,1.53);
\draw[-{Latex[length=8pt,width=4pt]}] (1.9,0.2)--(1.1,1.53);
\draw (1,0) node [below] {\small $(3,2)$};
\draw (0.4,0.85) node [left] {\small $(2,4)$};
\draw (1.6,0.85) node [right] {\small $(6,2)$};
\draw (0,0) node [below left] {\small $1$};
\draw (2,0) node [below right] {\small $2$};
\draw (1.1,1.73) node [right] {\small $3$};
\end{tikzpicture}
}
\caption{
The $G$-fan of Example \ref{ex:type2} for Type 2.
In the region $O_{+-+}$,
 every edge stemming from
the vertex $v_3=\bbR_{\geq 0}\bfe_{3}$
crosses the edge whose normal vector is $(0,6,1)$.
The vectors $(\alpha,\beta,-2\beta)$ and $(0,6,1)$
are orthogonal for the inner product \eqref{eq:inner1}.
Therefore,
this visualizes the formulas \eqref{eq:type2g1}
and \eqref{eq:type2g2}.
Meanwhile, the global patterns of the $G$-fan is essentially
the same as Figure \ref{fig:type1} by ignoring the difference of the normal vectors,
and it is obtained from it by the horizontal reflection.
}
\label{fig:type2}
\end{figure}

\begin{figure}
\centering
\includegraphics[width=320pt]{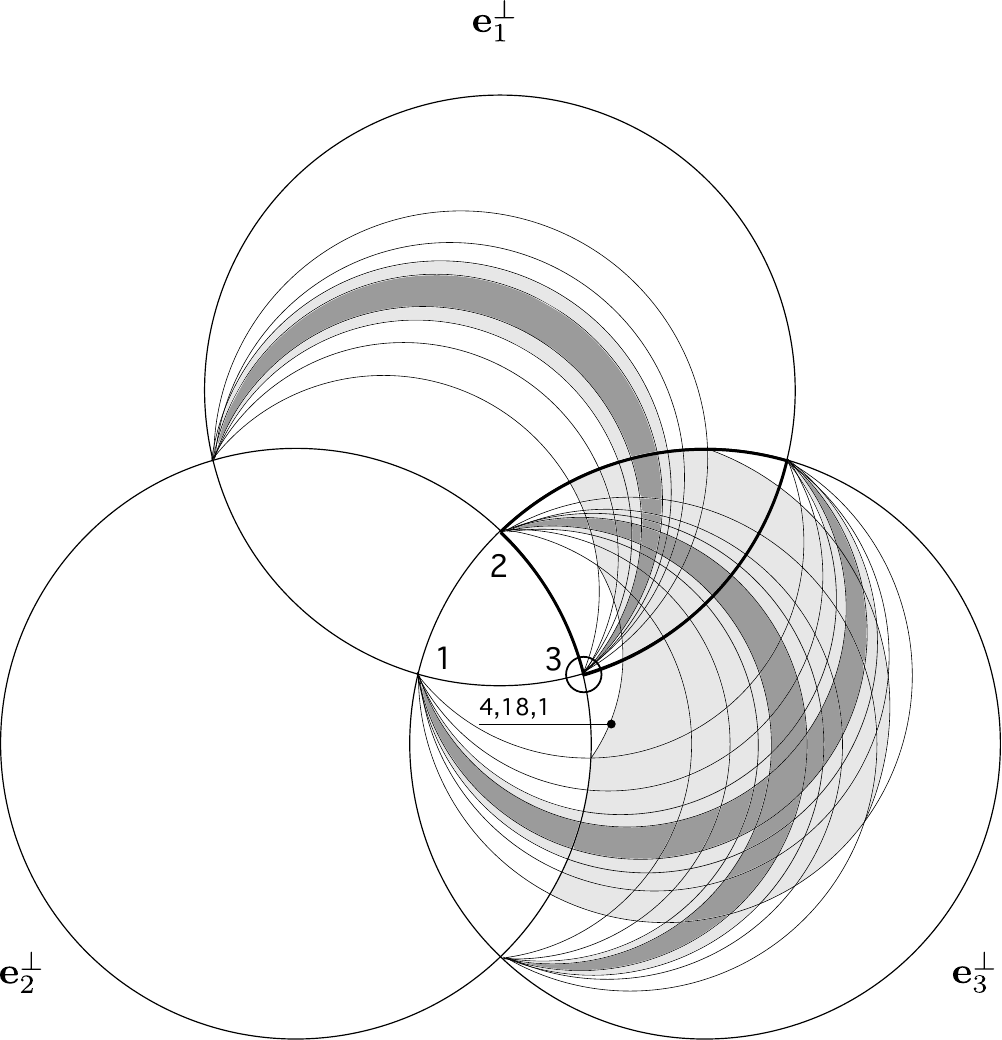}
\\
{
\small
$
\tilde B=
\begin{pmatrix}
0 & - 2 & 4
\\
3 & 0 & 6
\\
-2 & -2 & 0
\end{pmatrix}
$
}
\hskip30pt
\raisebox{-25pt}[45pt]
{
\begin{tikzpicture}[scale=0.8]
\draw (0,0) circle[radius=0.1];
\draw (2,0) circle[radius=0.1];
\draw (1,1.73) circle[radius=0.1];
\draw[{Latex[length=8pt,width=4pt]}-] (0.2,0)--(1.8,0);
\draw[-{Latex[length=8pt,width=4pt]}] (0.1,0.2)--(0.9,1.53);
\draw[-{Latex[length=8pt,width=4pt]}] (1.9,0.2)--(1.1,1.53);
\draw (1,0) node [below] {\small $(3,2)$};
\draw (0.4,0.85) node [left] {\small $(4,2)$};
\draw (1.6,0.85) node [right] {\small $(6,2)$};
\draw (0,0) node [below left] {\small $1$};
\draw (2,0) node [below right] {\small $2$};
\draw (1.1,1.73) node [right] {\small $3$};
\end{tikzpicture}
}
\caption{
The $G$-fan of Example \ref{ex:type3} for Type 3.
The vectors $(\alpha,\beta,-2 \alpha -6\beta)$ and $(4,18,1)$
are orthogonal.
Therefore,
this visualizes the formulas \eqref{eq:type3g1}
and \eqref{eq:type3g2}.
Meanwhile, the global patterns of the $G$-fan is essentially
the same as Figure \ref{fig:type1}
by ignoring the difference of the normal vectors,
and it is obtained from the rotation of Figure \ref{fig:type1}.
}
\label{fig:type3}
\end{figure}

\begin{figure}
\centering
\includegraphics[width=320pt]{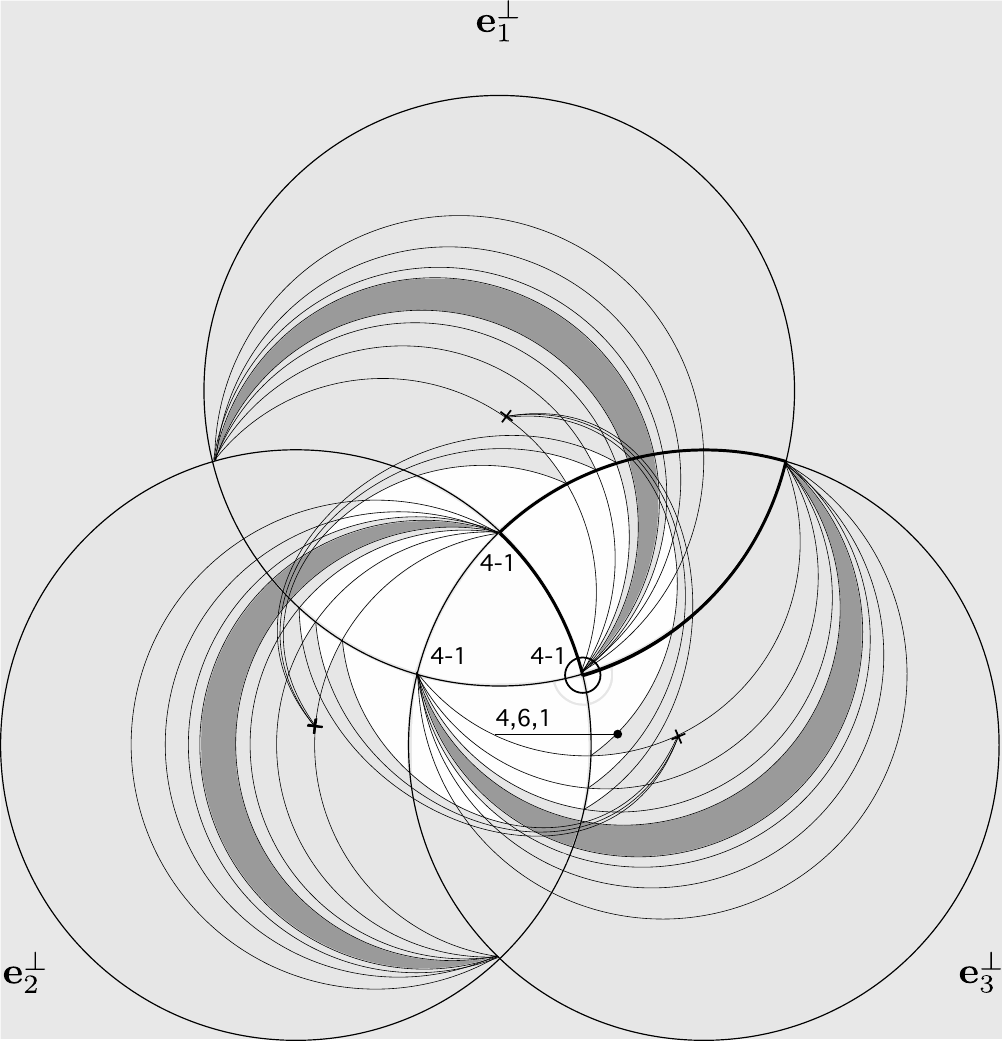}
\\
{
\small
$
\tilde B=
\begin{pmatrix}
0 & - 2 & 4
\\
3 & 0 & -6
\\
-2 & 2 & 0
\end{pmatrix}
$
}
\hskip30pt
\raisebox{-25pt}[45pt]
{
\begin{tikzpicture}[scale=0.8]
\draw (0,0) circle[radius=0.1];
\draw (2,0) circle[radius=0.1];
\draw (1,1.73) circle[radius=0.1];
\draw[{Latex[length=8pt,width=4pt]}-] (0.2,0)--(1.8,0);
\draw[-{Latex[length=8pt,width=4pt]}] (0.1,0.2)--(0.9,1.53);
\draw[{Latex[length=8pt,width=4pt]}-] (1.9,0.2)--(1.1,1.53);
\draw (1,0) node [below] {\small $(3,2)$};
\draw (0.4,0.85) node [left] {\small $(4,2)$};
\draw (1.6,0.85) node [right] {\small $(2,6)$};
\draw (0,0) node [below left] {\small $1$};
\draw (2,0) node [below right] {\small $2$};
\draw (1.1,1.73) node [right] {\small $3$};
\end{tikzpicture}
}
\caption{
``The pinwheel''.
The $G$-fan of Example \ref{ex:type41} for Type 4-1.
The vectors $(\alpha,\beta,-2 \alpha -2\beta)$ and $(4,6,1)$
are orthogonal.
Therefore,
this visualizes the formulas \eqref{eq:type41g1}
and \eqref{eq:type41g2}.
The global patterns of the $G$-fan is the same as the Markov quiver 
in \cite[Fig.~1]{Fock11}, \cite[Fig.~13]{Reading19}.
We have not yet determined the exact location of the ending points of
 arcs with the symbol $\times$.
 (The same applies to other figures.)
}
\label{fig:type4-1}
\end{figure}

\begin{figure}
\centering
\includegraphics[width=320pt]{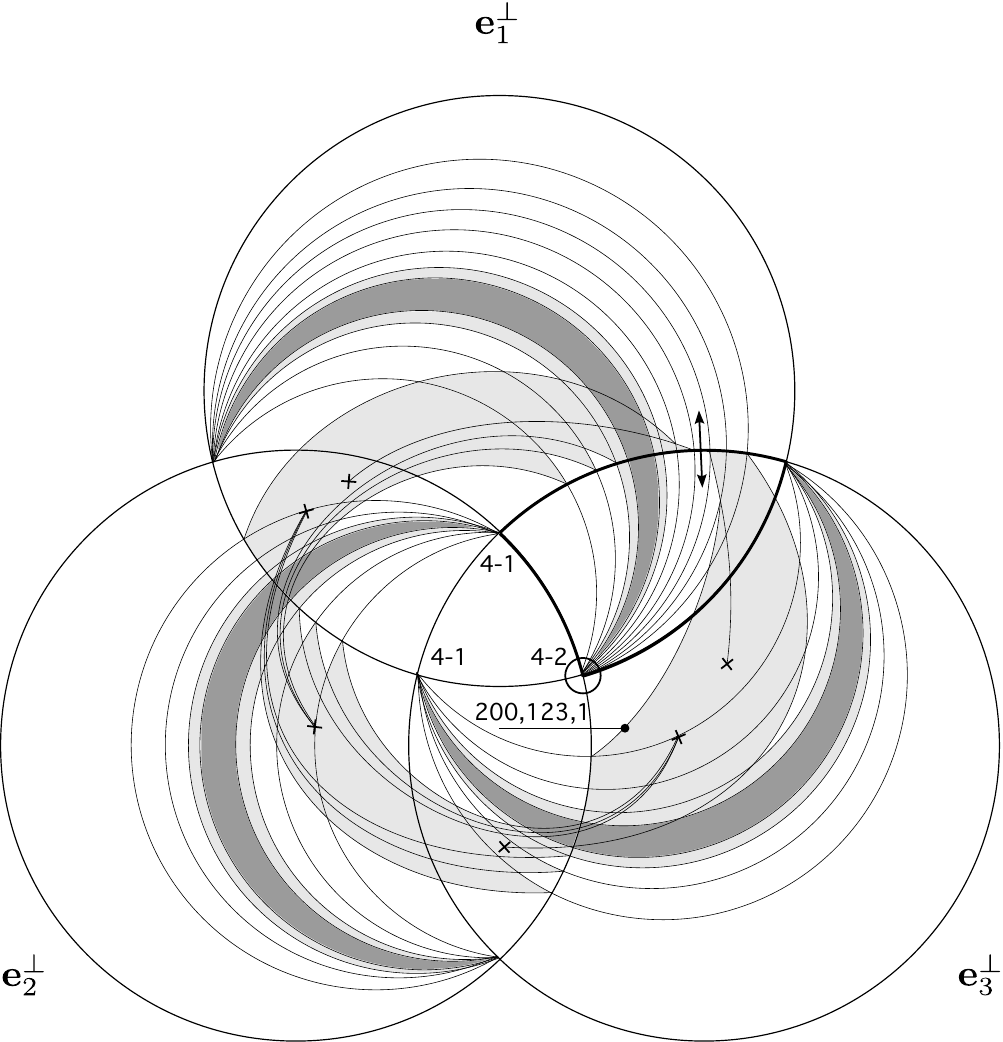}
\\
{
\small
$\tilde B=
\begin{pmatrix}
0 & - 2 & 200
\\
3 & 0 & -477
\\
-100 & 159 & 0
\end{pmatrix}
$
}
\hskip30pt
\raisebox{-25pt}[45pt]
{
\begin{tikzpicture}[scale=0.8]
\draw (0,0) circle[radius=0.1];
\draw (2,0) circle[radius=0.1];
\draw (1,1.73) circle[radius=0.1];
\draw[{Latex[length=8pt,width=4pt]}-] (0.2,0)--(1.8,0);
\draw[-{Latex[length=8pt,width=4pt]}] (0.1,0.2)--(0.9,1.53);
\draw[{Latex[length=8pt,width=4pt]}-] (1.9,0.2)--(1.1,1.53);
\draw (1,0) node [below] {\small $(3,2)$};
\draw (0.4,0.85) node [left] {\small $(200,100)$};
\draw (1.6,0.85) node [right] {\small $(159,477)$};
\draw (0,0) node [below left] {\small $1$};
\draw (2,0) node [below right] {\small $2$};
\draw (1.1,1.73) node [right] {\small $3$};
\end{tikzpicture}
}
\caption{
``The outside gate''.
The $G$-fan of Example \ref{ex:type421} for Type 4-2 with $N=3$.
The vectors $(\alpha,\beta,-100 \alpha -41\beta)$ and $(200,123,1)$
are orthogonal.
Therefore,
this visualizes the formulas \eqref{eq:type41g1},
\eqref{eq:type41g2}, and \eqref{eq:type421g1}.
The global pattern of the $G$-fan is very different from Figure \ref{fig:type4-1}.
There is a ``secret gate'' from $H_3^+$ to $H_3^-$
as depicted by an arrow.
As a result, the $G$-fan
reaches the negative orthant
as expected from Proposition \ref{prop:negative1}.
}
\label{fig:type4-2}
\end{figure}

\begin{figure}
\centering
\includegraphics[width=320pt]{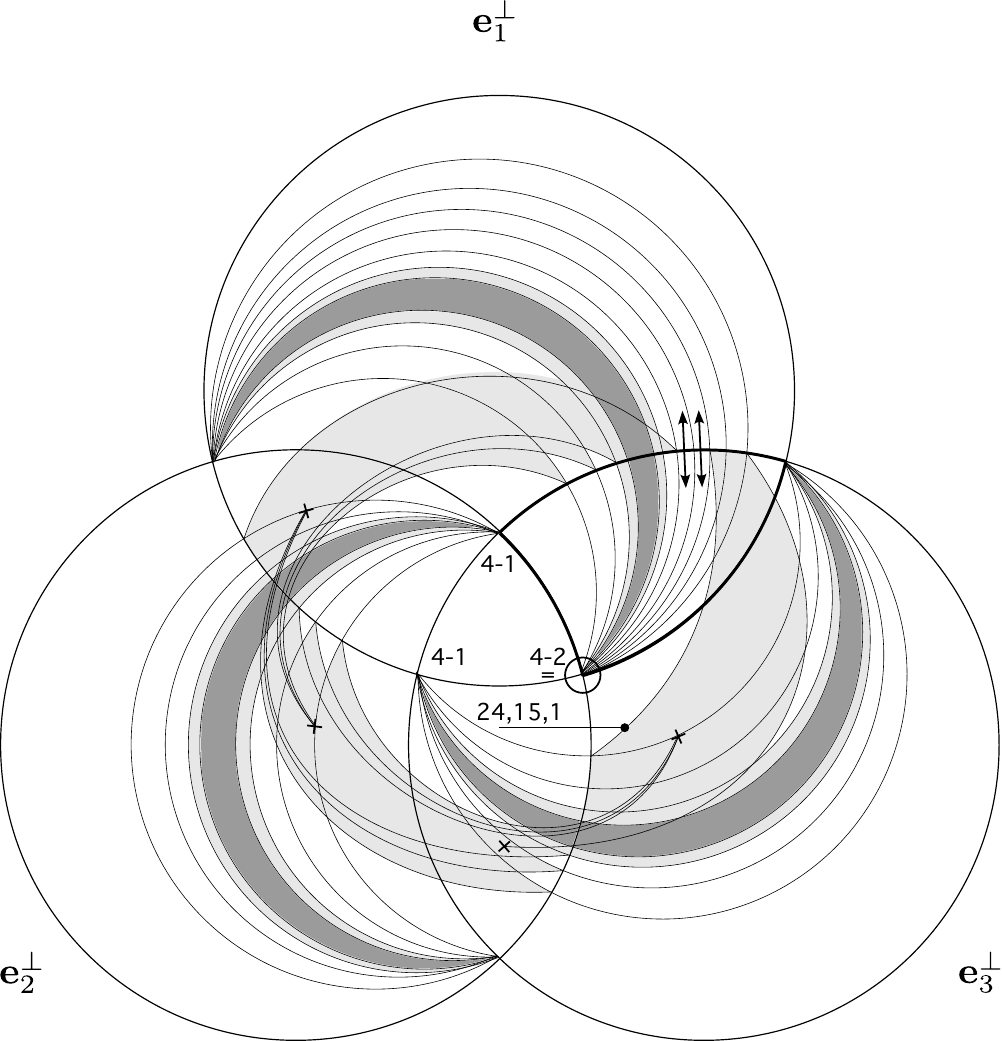}
\\
{
\small
$\tilde B=
\begin{pmatrix}
0 & - 2 & 24
\\
3 & 0 & -57
\\
-12 & 19 & 0
\end{pmatrix}
$
}
\hskip30pt
\raisebox{-25pt}[45pt]
{
\begin{tikzpicture}[scale=0.8]
\draw (0,0) circle[radius=0.1];
\draw (2,0) circle[radius=0.1];
\draw (1,1.73) circle[radius=0.1];
\draw[{Latex[length=8pt,width=4pt]}-] (0.2,0)--(1.8,0);
\draw[-{Latex[length=8pt,width=4pt]}] (0.1,0.2)--(0.9,1.53);
\draw[{Latex[length=8pt,width=4pt]}-] (1.9,0.2)--(1.1,1.53);
\draw (1,0) node [below] {\small $(3,2)$};
\draw (0.4,0.85) node [left] {\small $(24,12)$};
\draw (1.6,0.85) node [right] {\small $(19,57)$};
\draw (0,0) node [below left] {\small $1$};
\draw (2,0) node [below right] {\small $2$};
\draw (1.1,1.73) node [right] {\small $3$};
\end{tikzpicture}
}
\caption{
``The wide outside gate''.
A variant of Example \ref{ex:type421} for Type 4-2 with $N=3$.
In the first inequality of  \eqref{eq:case42N},
the equality is attained with $N=3$.
The gate is widened.
Also, there are some differences around the gate
from Figure \ref{fig:type4-2}.
}
\label{fig:type4-2-b}
\end{figure}

\begin{figure}
\centering
\includegraphics[width=320pt]{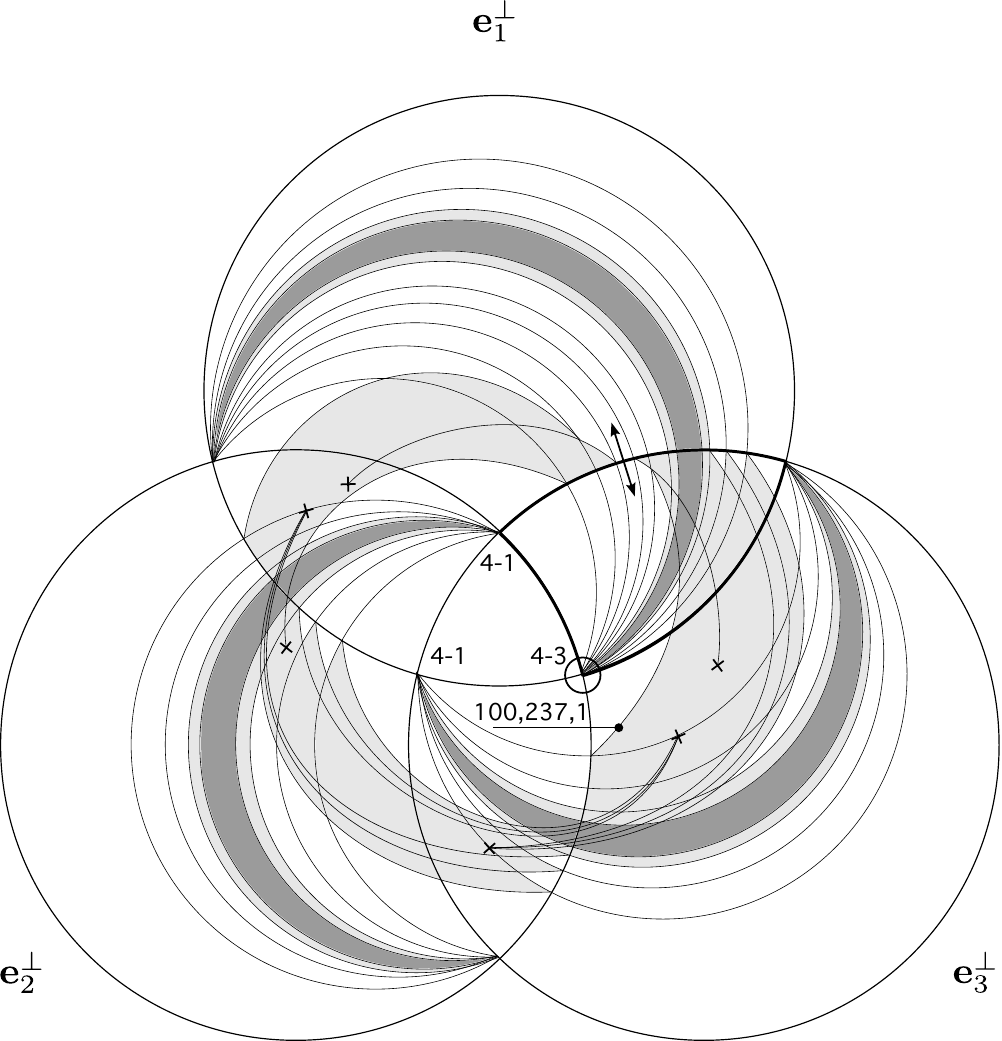}
\\
{
\small
$
\tilde B=
\begin{pmatrix}
0 & - 2 & 100
\\
3 & 0 & -63
\\
-50 & 21 & 0
\end{pmatrix}
$
}
\hskip30pt
\raisebox{-25pt}[45pt]
{
\begin{tikzpicture}[scale=0.8]
\draw (0,0) circle[radius=0.1];
\draw (2,0) circle[radius=0.1];
\draw (1,1.73) circle[radius=0.1];
\draw[{Latex[length=8pt,width=4pt]}-] (0.2,0)--(1.8,0);
\draw[-{Latex[length=8pt,width=4pt]}] (0.1,0.2)--(0.9,1.53);
\draw[{Latex[length=8pt,width=4pt]}-] (1.9,0.2)--(1.1,1.53);
\draw (1,0) node [below] {\small $(3,2)$};
\draw (0.4,0.85) node [left] {\small $(100,50)$};
\draw (1.6,0.85) node [right] {\small $(21,63)$};
\draw (0,0) node [below left] {\small $1$};
\draw (2,0) node [below right] {\small $2$};
\draw (1.1,1.73) node [right] {\small $3$};
\end{tikzpicture}
}
\caption{
``The  inside gate''.
The $G$-fan of Example \ref{ex:type431} for Type 4-3 with $N=3$.
The vectors $(\alpha,\beta,-50 \alpha -79\beta)$ and $(200,237,1)$
are orthogonal.
Therefore,
this visualizes the formulas \eqref{eq:type41g1},
\eqref{eq:type41g2}, and \eqref{eq:type431g2}.
The global pattern of the $G$-fan is
similar (dual) to Figure \ref{fig:type4-2}.
}
\label{fig:type4-3}
\end{figure}

\begin{figure}
\centering
\includegraphics[width=320pt]{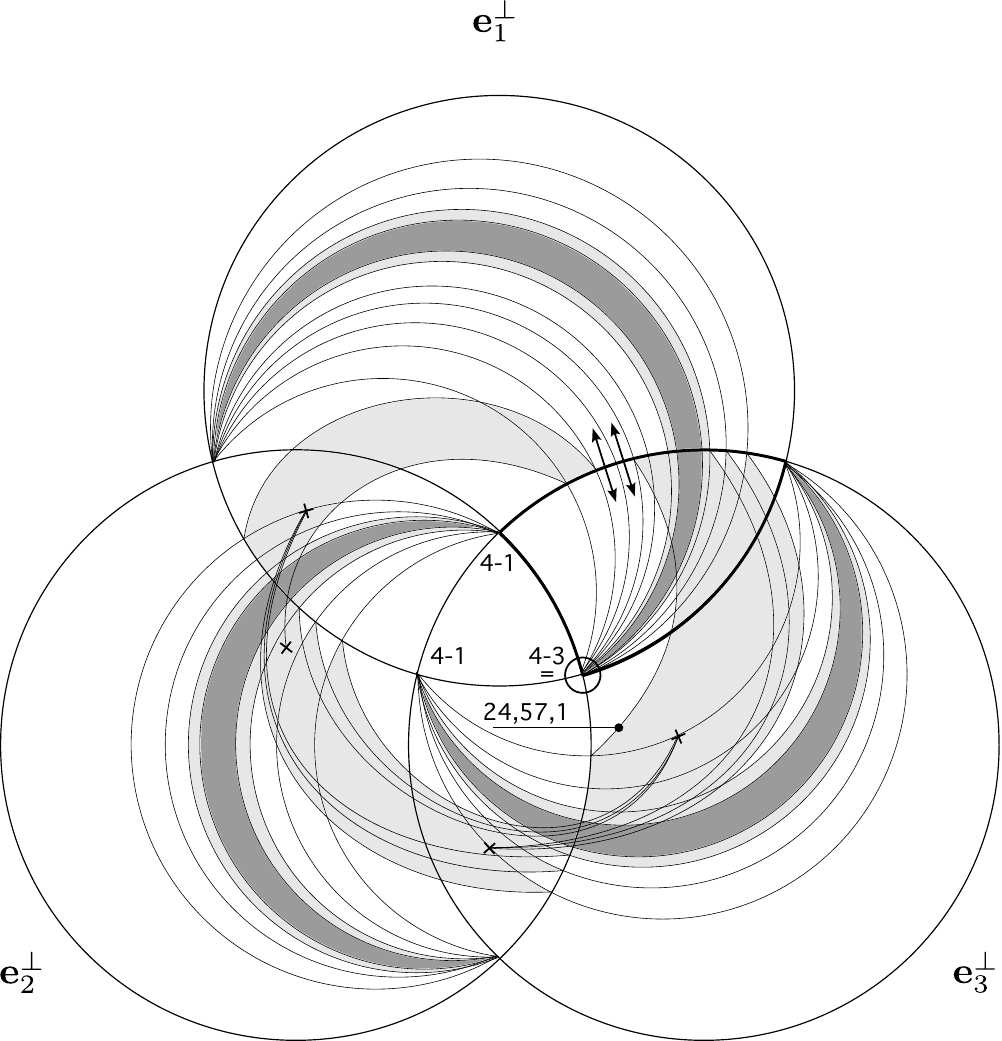}
\\
{
\small
$
\tilde B=
\begin{pmatrix}
0 & - 2 & 24
\\
3 & 0 & -15
\\
-12 & 5 & 0
\end{pmatrix}
$
}
\hskip30pt
\raisebox{-25pt}[45pt]
{
\begin{tikzpicture}[scale=0.8]
\draw (0,0) circle[radius=0.1];
\draw (2,0) circle[radius=0.1];
\draw (1,1.73) circle[radius=0.1];
\draw[{Latex[length=8pt,width=4pt]}-] (0.2,0)--(1.8,0);
\draw[-{Latex[length=8pt,width=4pt]}] (0.1,0.2)--(0.9,1.53);
\draw[{Latex[length=8pt,width=4pt]}-] (1.9,0.2)--(1.1,1.53);
\draw (1,0) node [below] {\small $(3,2)$};
\draw (0.4,0.85) node [left] {\small $(24,12)$};
\draw (1.6,0.85) node [right] {\small $(5,15)$};
\draw (0,0) node [below left] {\small $1$};
\draw (2,0) node [below right] {\small $2$};
\draw (1.1,1.73) node [right] {\small $3$};
\end{tikzpicture}
}
\caption{
``The wide inside gate ''.
A variant of Example \ref{ex:type431} for Type 4-3 with $N=3$.
In the first inequality of  \eqref{eq:case43N},
the equality is attained with $N=3$.
}
\label{fig:type4-3-b}
\end{figure}

\clearpage
\section{Prototypical examples of global patterns of rank 3 $G$-fan}
\label{sec:global1}

In this section, we present some experimental results.

In the previous section, we saw that the behavior of a rank 3 $G$-fan $\Delta(B)$ around the vertex $v_3$, which represents the ray
$\bbR_{\geq 0} \bfe_3$ in the stereographic projection, are classified
by the type of the initial matrix $B$ in Section \ref{sec:lemv1}.
If $a$ and $b$ in \eqref{eq:B0} are negative,
we assign the type by changing the roles of the indices 1 and 2.
(Namely, in this case, we assign the type by looking at the image of $\Delta(B)$ by reflection.)
We identify it as the \emph{type of the vertex $v_3$}.

Now, let us assume that $\Delta(B)$ is of totally-infinite type as defined in Section \ref{sec:intro1}.
Let us call the vertices $v_i$ ($i=1$, 2, 3)  in $\Delta(B)$, representing the ray $\bbR_{\geq 0} \bfe_3$ in the stereographic projection,
the \emph{elementary vertices}.
We assign the types to $v_1$ and $v_2$ in the same way as $v_3$ by changing the indices cyclically.
For a rank 3 $G$-fan $\Delta(B)$ of totally infinite type,
we call the triplet of the types of the elementary vertices
the \emph{type of $\Delta(B)$}.

\begin{ex}
In Figure \ref{fig:type1}, the vertex $v_3$ is clearly of Type 1.
The vertex $v_1$ is of Type 3, because it matches the vertex $v_3$ in Figure \ref{fig:type3} by rotation.
The vertex $v_2$ is of Type 2, because it matches the vertex $v_3$ in Figure \ref{fig:type3} by reflection.
Thus, the type of the $G$-fan is $(3,2,1)$.
\end{ex}

Clearly, the type of $\Delta(B)$ controls the local behaviors (the \emph{local pattern})
of $\Delta(B)$ around the elementary vertices.
On the other hand,
looking at the pictures in the previous section, we observed that 
the local pattern of a rank 3 $G$-fan also correlates with its global behavior (the \emph{global pattern}).
Based on this observation, we present prototypical examples of global patterns of $\Delta(B)$
following the classification by the type of $\Delta(B)$ with some additional data.

\subsection{Cyclic presentation of skew-symmetrizable matrix}
It is convenient to write a $3\times 3$ skew-symmetrizable matrix $B$ in a cyclic way
\begin{align}
\label{eq:Bcyclic1}
B =
\begin{pmatrix}
0 & - p_3' & p_2
\\
p_3 & 0 &- p_1'
\\
-p_2' & p_1& 0
\end{pmatrix}
\quad
(p_i,\, p'_i \in \bbZ).
\end{align}
Let $D=\mathrm{diag}(d_1,d_2,d_3)$ be a skew-symmetrizer of $B$.
We have
\begin{align}
d_3 p_1=d_2 p'_1,
\quad
d_1 p_2=d_3 p'_2,
\quad
d_2 p_3=d_1p'_3.
\end{align}
Then, the skew-symmetrizable condition is rephrased as
\cite[Lemma 7.4]{Fomin03a}
\begin{align}
p_1p_2p_3=p'_1 p'_2 p'_3,
\quad
p_i p'_i \geq  0,
\quad
\text{and}\
p_i=0\ \Longleftrightarrow 'p'_i=0.
\end{align}
Recall that $B$ is {of totally-infinite type} if $p_ip'_i\geq 4$ for any $i$.
We say that
\begin{itemize}
\item
$B$ is \emph{cyclic} (resp.~\emph{acyclic}) if $p_i>0$ for any $i$ or $p_i<0$ for any $i$ (resp.~ otherwise).
\item
$B$ is \emph{cluster-cyclic}  if any matrix which is mutation-equivalent to $B$ is cyclic (including $B$ itself).
\item
$B$ is \emph{cluster-acyclic} if it is not cluster-cyclic, namely, if $B$ is mutation-equivalent to an acyclic matrix,
\end{itemize}
Following \cite{Beineke06},
for a cyclic matrix $B$, we define  the \emph{Markov constant}
\begin{align}
C(B)=p_1p'_1 + 
p_2p'_2 + 
p_3p'_3 - |p_1p_2 p_3|.
\end{align}
The following fact is proved by  \cite{Beineke06} for the skew-symmetric case
and by \cite{Akagi24} for the skew-symmetrizable case.

\begin{thm}[{\cite{Beineke06,Akagi24}}]
\label{thm:cc1}
For a cyclic matrix $B$,
$B$ is cluster-cyclic if and only if $B$ is of totally-infinite type and
$C(B)\leq 4$.
\end{thm}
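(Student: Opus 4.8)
The plan is to reduce the whole statement to one explicit mutation computation for a cyclic matrix written in the cyclic form \eqref{eq:Bcyclic1}, extract a dichotomy from it, and feed that dichotomy into an induction (for the ``if'' direction) and a descent (for the ``only if'' direction). So first I would fix a cyclic $B$ with weights $p_i,p_i'>0$ and compute $\mu_k(B)$ from \eqref{eq:bmut1}: mutation at $k$ reverses the two arrows incident to $k$ and leaves the products $p_\bullet p_\bullet'$ of those two edges unchanged, so the only interesting entry lies on the edge opposite to $k$. One checks that $\mu_k(B)$ is again cyclic --- now with the opposite cyclic orientation --- exactly when that third arrow also flips, i.e. when $p_ip_j>p_k'$ (equivalently, using skew-symmetrizability, $p_i'p_j'>p_k$); otherwise $\mu_k(B)$ is acyclic. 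In the cyclic case the opposite weight-product changes by a Vieta-type jump; in the skew-symmetric specialization $p_i=p_i'$, writing the arrow weights as $a,b,c$, this is simply $b\mapsto ac-b$, and the single genuinely computational input is the identity $a^2+b^2+c^2-abc=a^2+(ac-b)^2+c^2-a(ac-b)c$, i.e. \emph{$C$ is invariant under the jump}; together with $C(B)\le4$ one also gets that the jumped edge keeps weight-product $\ge4$.

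\textbf{The ``if'' direction.} Assume $B$ is cyclic, of totally-infinite type, and $C(B)\le4$. The key lemma is that then every $\mu_k(B)$ is again cyclic, of totally-infinite type, with $C(\mu_k(B))=C(B)$. By the dichotomy it suffices to verify $p_ip_j>p_k'$ for each $k$: from $p_ip_i',\,p_jp_j'\ge4$ and $C(B)\le4$ one has $|p_1p_2p_3|\ge p_kp_k'+4>p_kp_k'$, and since $|p_1p_2p_3|=p_k\cdot p_ip_j$ this gives $p_ip_j>p_k'$; the remaining two assertions follow from the invariance statements above. Iterating the lemma along any path in $\bbT_3$ shows that every matrix mutation-equivalent to $B$ is cyclic, i.e. $B$ is cluster-cyclic.

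\textbf{The ``only if'' direction.} I would argue the contrapositive. If $B$ is cyclic but some $|b_{ij}b_{ji}|\le3$, the rank-$2$ restriction to that pair is of finite type, and a short, finite inspection of the mutations around that light edge exhibits an acyclic matrix in the class. If $B$ is of totally-infinite type but $C(B)>4$, run a Markov-tree descent: among all matrices mutation-equivalent to $B$, choose one with $|p_1p_2p_3|$ minimal (possible, the weights being positive integers), and mutate at the vertex opposite its heaviest edge. Were the result still cyclic, minimality would give (in the skew-symmetric picture, with $a\le b\le c$ the arrow weights) $ab-c\ge c$; writing $C=(c-\tfrac{ab}{2})^2+a^2+b^2-\tfrac{a^2b^2}{4}$ and using $b\le c\le\tfrac{ab}{2}$ together with $a\ge2$ forces $C\le4$, contradicting $C(B)>4$. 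Hence that mutation is acyclic, so $B$ is cluster-acyclic.

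\textbf{Main obstacle.} The two delicate points --- both carried out in \cite{Akagi24} --- are (i) pushing the mutation computation and the invariance of $C$ through the general skew-symmetrizable case, where one must keep track of the primed weights $p_i'$ and the skew-symmetrizer relations $d_3p_1=d_2p_1'$, $d_1p_2=d_3p_2'$, $d_2p_3=d_1p_3'$ and check that the cancellations still occur; and (ii) making the descent airtight, i.e. verifying that a $|p_1p_2p_3|$-minimal representative whose heaviest-edge mutation remains cyclic really must satisfy $C\le4$. The ``if'' direction is, by comparison, an inequality chase once the dichotomy and the $C$-invariance are in hand.
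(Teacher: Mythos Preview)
The paper does not prove Theorem~\ref{thm:cc1}; it is stated with attribution to \cite{Beineke06,Akagi24} and used as a black box, so there is no ``paper's own proof'' to compare your proposal against. Your sketch is precisely the strategy of those cited references: the explicit mutation/dichotomy computation, the Vieta-type invariance of $C(B)$, the inequality chase for the ``if'' direction, and the Markov-tree descent on $|p_1p_2p_3|$ for the ``only if'' direction. One caveat: in your ``only if'' first case, the claim that a light edge ($|b_{ij}b_{ji}|\le 3$) yields an acyclic neighbor after ``a short, finite inspection'' is not as immediate as you suggest---in the skew-symmetric case it reduces to $p_k=1$ and a single mutation at the vertex opposite the heaviest edge suffices, but in the skew-symmetrizable case (where $p_kp_k'\in\{2,3\}$ is possible with neither factor equal to $1$) this step needs the more careful bookkeeping you flag as obstacle~(i). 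Otherwise the outline is sound and matches the literature.
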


Below, for a rank 3 $G$-fan $\Delta(B)$, 
we assume that
\begin{align}
\label{cond:B1}
\text{the matrix $B$ is of totally-infinite type, and $p_3>0$.}
\end{align}
If $p_3<0$, then we change the indices 1 and 2 to have $p_3>0$.
So, we do not lose generality.
All pictures of the $G$-fans presented in Section \ref{sec:pictures1} also obey this condition.

\subsection{Acyclic case}
Suppose that $B$ is acyclic.
Then, the vertex $v_1$ is of Types 1, 2, or 3.
Moreover, the types of other vertices $v_2$ and $v_3$
are uniquely determined by the type of $v_1$ as in the examples in Figures \ref{fig:type1}, \ref{fig:type2}, and \ref{fig:type3}.
So, the roles of Types 1, 2, and 3 are mutual.
We observe that both local and global patterns of the $G$-fans in Figures \ref{fig:type1}, \ref{fig:type2}, and \ref{fig:type3} are the same
up to rotations and reflections.
Here, ``the same'' means the pictures are indistinguishable if we ignore the data of normal vectors.
It is natural to expect that the global pattern is the same (up to rotations and reflections) for any $G$-fan $\Delta(B)$
with the following exception:
If there is an exchange matrices $B_t$ with $|b_{ij;t} b_{ji;t}|<4$ for some pair $i\neq j$ (finite pair),
then the region of $\Delta(B)$ is slightly expanded so that the corresponding vertex in $\Delta(B)$ becomes an inner point of $\Delta(B)$
due to the periodicity of the alternating mutations for the pair $(i,j)$.
We call it a \emph{finite degeneration} and regard it as a minor variant of the above generic global pattern.

We see in Figures \ref{fig:type1}, \ref{fig:type2}, and \ref{fig:type3} that
the negative orthant $O_{---}$ is a $G$-cone.
This is true in general when $B$ is acyclic.
For example, when the vertex 3 is Type 1 (e.g., Figure \ref{fig:type1}),
we have a mutation sequence of $C$-matrices starting from the initial $C$-matrix
\begin{align}
\label{eq:gr1}
\begin{pmatrix}
1 & 0 & 0
\\
0 & 1 & 0
\\
0 & 0 & 1
\end{pmatrix}
\overset{1}{\mapsto}
\begin{pmatrix}
-1 & 0 & 0
\\
0 & 1 & 0
\\
0 & 0 & 1
\end{pmatrix}
\overset{2}{\mapsto}
\begin{pmatrix}
-1 & 0 & 0
\\
0 & -1 & 0
\\
0 & 0 & 1
\end{pmatrix}
\overset{3}{\mapsto}
\begin{pmatrix}
-1 & 0 & 0
\\
0 & -1 & 0
\\
0 & 0 & -1
\end{pmatrix}
\end{align}
because the factor $[\varepsilon_{k;t}b_{ki;t}]_+$ in \eqref{eq:cmut1} vanishes during the mutations.
Then, by the duality \eqref{eq:dual1}, 
we have the corresponding sequence of $G$-matrices
\begin{align}
\begin{pmatrix}
1 & 0 & 0
\\
0 & 1 & 0
\\
0 & 0 & 1
\end{pmatrix}
\overset{1}{\mapsto}
\begin{pmatrix}
-1 & 0 & 0
\\
0 & 1 & 0
\\
0 & 0 & 1
\end{pmatrix}
\overset{2}{\mapsto}
\begin{pmatrix}
-1 & 0 & 0
\\
0 & -1 & 0
\\
0 & 0 & 1
\end{pmatrix}
\overset{3}{\mapsto}
\begin{pmatrix}
-1 & 0 & 0
\\
0 & -1 & 0
\\
0 & 0 & -1
\end{pmatrix}
.
\end{align}
The $G$-cone for the last $G$-matrix is the negative orthant $O_{\tiny---}$.
(This fact is a special case of more general result \cite[Lemma 2.20]{Brustle12} for any skew-symmetric matrix of any rank.
Its proof can be extended to the skew-symmetrizable case by considering
 valued quivers.)

\subsection{Cyclic case}
Suppose that $B$ is cyclic.
Then, all elementary vertices are of Types 4-1, 4-2, or 4-3.
In contrast to the acyclic case, there are some varieties of global patterns of $\Delta(B)$.
The following factors are relevant:
\begin{enumerate}
\item
The type of  $\Delta(B)$.
\item
Whether $B$ is cluster-cyclic or not.
\end{enumerate}

Let us explain why the factor (2) enters.
The following fact is known.

\begin{prop}[{\cite[Thm.~3.2.1]{Muller15}}]
\label{prop:negative1}
For any cluster-acyclic skew-symmetric matrix $B$ of any rank, 
the negative orthant is a $G$-cone.
\end{prop}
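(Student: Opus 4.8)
The plan is to reformulate the statement categorically and then exploit the shift functor of a cluster category. It suffices to produce a vertex $t\in\bbT_n$ whose $g$-vectors $\bfg_{1;t},\dots,\bfg_{n;t}$ are $-\bfe_1,\dots,-\bfe_n$ in some order, since then $\sigma(G_t)$ is exactly the negative orthant (equivalently, by the duality \eqref{eq:dual1}, one asks for a vertex $t$ with $C_t=-I$ up to a permutation of columns). Write $B=B_Q$ for a quiver $Q$; by hypothesis $Q$ is mutation-equivalent to an acyclic quiver $Q'$.

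First I would pass to the cluster category $\calC_{Q'}=D^b(\mathrm{mod}\,\bbk Q')/\tau^{-1}[1]$ of Buan--Marsh--Reineke--Reiten--Todorov. Under the standard dictionary the seeds of $\bfSigma(B)$ correspond bijectively, and compatibly with mutation, to the basic cluster-tilting objects of $\calC_{Q'}$; let $T=\bigoplus_{i=1}^{n}T_i$ be the cluster-tilting object attached to the initial vertex $t_0$. By the index theorem of Dehy--Keller (see also Fu--Keller, Palu), the $g$-vector of the $i$-th cluster variable at a vertex $t$, computed with base point $t_0$, equals the index $\mathrm{ind}_T(X_{i;t})$ of the corresponding indecomposable object, normalized so that $\mathrm{ind}_T(T_i)=\bfe_i$.

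The key observation will be that $T[1]=\bigoplus_i T_i[1]$ is again a basic cluster-tilting object, as $[1]$ is a triangle autoequivalence of $\calC_{Q'}$ and thus preserves rigidity and maximality; and from the triangle $T_i\to 0\to T_i[1]\xrightarrow{\ \mathrm{id}\ }T_i[1]$ one reads $\mathrm{ind}_T(T_i[1])=-\bfe_i$. So the problem reduces to showing that $T[1]$ is the cluster-tilting object of some seed $\Sigma_t$, because that $t$ then has $g$-vectors $-\bfe_1,\dots,-\bfe_n$, as desired. To realize $T[1]$ by a seed I would use that mutation of cluster-tilting objects commutes with the shift, $\mu_k(U)[1]=\mu_k(U[1])$: writing $T$ as a sequence of mutations applied to the canonical cluster-tilting object $\bbk Q'$, the same sequence carries $\bbk Q'[1]$ to $T[1]$, so it is enough to know that $\bbk Q'[1]$ realizes a seed. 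But this is precisely the acyclic base case: for acyclic $Q'$ the source-order mutation sequence carries the $C$-matrix from $I$ to $-I$ (this is \cite[Lemma~2.20]{Brustle12}, illustrated by \eqref{eq:gr1}), and the resulting seed has $G$-matrix $-I$, hence its cluster-tilting object has all indices $-\bfe_i$ relative to $\bbk Q'$ and therefore equals $\bbk Q'[1]$.

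The main obstacle, and the only non-formal ingredient, is importing the categorical dictionary for $\calC_{Q'}$: the mutation-compatible bijection between seeds and cluster-tilting objects, the identification of $g$-vectors with indices, and the compatibility of mutation with $[1]$. These are standard for acyclic quivers, but a proof avoiding them would have to face directly how $C$-matrices and tropical signs transform under a change of the initial seed along a mutation path from $Q$ to $Q'$, which is where the real difficulty sits. An alternative route is to note that the existence of such a seed is equivalent to that of a reddening (green-to-red) sequence and to invoke the mutation-invariance of reddening sequences, which itself rests on the uniqueness of consistent cluster scattering diagrams \cite{Gross14}. Finally, the skew-symmetric hypothesis is used here exactly so that $\calC_{Q'}$ exists; in the skew-symmetrizable case one would replace $\bbk Q'$ by a suitable species over a valued quiver.
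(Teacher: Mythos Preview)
Your argument is correct and takes a genuinely different route from the paper. The paper does not prove the statement itself: it cites Muller's scattering-diagram argument and remarks that the scattering-diagram machinery can be replaced by the piecewise-linear \emph{dual mutation of $G$-fans} (the $G$-fan transforms by an explicit tropical map under change of initial seed, so the acyclic base case propagates along a mutation path from $B'$ to $B$). Your approach is a third one: you encode the same transport via the shift autoequivalence $[1]$ on $\calC_{Q'}$, using that mutation of cluster-tilting objects commutes with $[1]$ and that indices compute $g$-vectors (Dehy--Keller, Fu--Keller, Palu). This is conceptually the cleanest---once the dictionary is in place the proof is two lines---but it imports the full categorification machinery, whereas the paper's suggested alternative stays within the combinatorics of $C$- and $G$-matrices. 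All three approaches reduce to the same acyclic kernel (the source-order sequence gives $C_t=-I$, as in \eqref{eq:gr1} and \cite[Lemma~2.20]{Brustle12}), and your closing remark about species over valued quivers matches exactly the paper's comment on the skew-symmetrizable extension.
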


The proof of \cite{Muller15} can be extended to the skew-symmetrizable case.
Also, we may avoid the scattering diagram formulation therein by using the
dual mutation of $G$-fans (e.g., \cite[Prop.~II.2.22]{Nakanishi22a}.)
On the other hand, in Figure \ref{fig:type4-1},
it is unlikely that the negative orthant is in $\Delta(B)$.
For the matrix $B$ therein, the Markov constant is $C(B)=2$. So, by Proposition \ref{thm:cc1},
$B$ is cluster-cyclic.
Thus, we expect that there is a qualitative difference between
the cluster-cyclic and cluster-acyclic cases.

Let us first consider the factor (1).
The conditions \eqref{eq:type41cond1}--\eqref{eq:type43cond1} for $v_{3}$ is rewritten as
\begin{align}
\label{eq:41cond12}
&\text{Type 4-1:}\quad
 p_1p'_1 + p_2p'_2 \leq p_1p_2p_3 ,
 \\
\label{eq:42cond12}
&\text{Type 4-2:}\quad
 p_1p'_1 + p_2p'_2 > p_1p_2p_3 ,
 \quad
 2p_1 p'_1 >p_1p_2p_3,
 \\
\label{eq:43cond12}
&\text{Type 4-3:}\quad
 p_1p'_1 + p_2p'_2 > p_1p_2p_3 ,
 \quad
 2p_1 p'_1 <p_1p_2p_3.
\end{align}
The conditions for $v_{1}$ and $v_{2}$ are given cyclically.

\begin{lem}
\label{lem:v2}
Let $B$ be a cyclic matrix satisfying the condition \eqref{cond:B1}.
\par
(1) For any cyclic matrix at least one of $v_1$, $v_2$, or $v_3$ is of Type 4-1.
\par
(2) Suppose that $v_2$ is the only elementary vertex of Type 4-1.
Then, $v_1$ is of Type 4-2 and $v_3$ is of Type 4-3.
\end{lem}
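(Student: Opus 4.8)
The plan is to recast everything in terms of the three ``weights'' $x=p_1p_1'$, $y=p_2p_2'$, $z=p_3p_3'$ and the number $P=p_1p_2p_3=p_1'p_2'p_3'$. Since $B$ is cyclic and $p_3>0$, all $p_i,p_i'$ are positive, so $x,y,z,P>0$; since $B$ is of totally-infinite type, $x,y,z\geq 4$; and skew-symmetrizability gives $P^2=(p_1p_2p_3)(p_1'p_2'p_3')=xyz$. By \eqref{eq:41cond12}--\eqref{eq:43cond12}, $v_3$ is of Type 4-1, 4-2, or 4-3 according as $x+y\leq P$, or ($x+y>P$ and $2x>P$), or ($x+y>P$ and $2x<P$), and the conditions for $v_1$ and $v_2$ are obtained by the cyclic substitutions $(x,y,z)\mapsto(y,z,x)$ and $(x,y,z)\mapsto(z,x,y)$. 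In particular, $v_i$ is of Type 4-1 precisely when the sum of the two weights not carried by $v_i$ is at most $P$.

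For part (1), I would take an elementary vertex $v_{i_0}$ whose weight is maximal among $x,y,z$, say equal to $w_0$, and write $w\leq w'$ for the other two weights, so $w'\leq w_0$ and $w\geq 4$. Then $v_{i_0}$ is of Type 4-1 as soon as $w+w'\leq P$, which follows from
\begin{align*}
(w+w')^2\ \leq\ (2w')^2\ =\ 4w'^2\ \leq\ w\,w'^2\ \leq\ w\,w'\,w_0\ =\ P^2,
\end{align*}
where $4w'^2\leq w w'^2$ uses $w\geq 4$ and $w w'^2\leq w w' w_0$ uses $w'\leq w_0$. Thus the vertex carrying the largest weight is always of Type 4-1.

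For part (2), the hypothesis that $v_2$ is the only elementary vertex of Type 4-1 translates, using that $B$ cyclic forces $v_1,v_2,v_3$ to be of Type 4, into: $z+x\leq P$ from $v_2$, and $y+z>P$ and $x+y>P$ from $v_1$ and $v_3$ (each of which, being of Type 4-2 or 4-3, satisfies that first inequality). Subtracting $z+x\leq P$ from each of the other two yields $y>x$ and $y>z$, so $y$ is the strict maximum of $\{x,y,z\}$. It then remains to decide, for $v_1$ and $v_3$, the ``second inequality''. For $v_1$: if $2y\leq P$, then $y+z>P\geq 2y$ gives $z>y$, contradicting $y>z$; hence $2y>P$, so $v_1$ is of Type 4-2. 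For $v_3$: if $2x\geq P$, then $xyz=P^2\leq 4x^2$ gives $yz\leq 4x$, whence $4y\leq zy\leq 4x$ (using $z\geq 4$) and $y\leq x$, contradicting $y>x$; hence $2x<P$, so $v_3$ is of Type 4-3.

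The bookkeeping of the first paragraph is routine and is essentially recorded already in \eqref{eq:41cond12}--\eqref{eq:43cond12}; the genuine content is the short inequality manipulation, and the point that needs care is that in part (2) the strict maximality of $y$ is exactly what forces $v_1$ into Type 4-2 and $v_3$ into Type 4-3. I do not expect any serious obstacle beyond organizing these elementary estimates.
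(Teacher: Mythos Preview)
Your proof is correct. For part (2) it is close in spirit to the paper's: the paper adds the three inequalities to get $2p_2p_2'>A$ and then, for $v_3$, invokes the argument of part (1) to derive a contradiction from $2p_1p_1'\geq A$; you instead subtract to get $y>x$ and $y>z$ first and then dispatch $v_1$ and $v_3$ by two one-line contradictions. Both are equally short.

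Part (1), however, is genuinely different. The paper argues by cases: it assumes $v_3$ is not of Type 4-1, splits on whether $2p_1p_1'>A$ or $2p_1p_1'<A$, and in each case deduces that a different one of $v_1,v_2$ is of Type 4-1 (using $A^2=(p_1p_1')(p_2p_2')(p_3p_3')$ and the bound $(4+\alpha)(4+\beta)\geq 2(4+\alpha)+2(4+\beta)$). Your argument is more direct and actually proves a sharper statement: the vertex carrying the \emph{largest} weight $p_ip_i'$ is always of Type 4-1, via the single chain $(w+w')^2\leq 4w'^2\leq ww'^2\leq ww'w_0=P^2$. This avoids the case split entirely and pinpoints which vertex must be of Type 4-1, which the paper's proof does not. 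The trade-off is minimal; your route is cleaner.
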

\begin{proof}
(1) Let $A=p_1p_2p_3=p'_1p'_2p'_3$.
Suppose that $v_3$ is not Type 4-1.
Then, by \eqref{eq:42cond12} and \eqref{eq:43cond12}, $2p_1p'_1> A$ or $2p_1p'_1<A$ holds.
Let us consider the former case $2p_1p'_1> A$.
Then, we have $2p_1> p'_2p'_3$ and $2p'_1> p_2p_3$.
It implies $4 p_1 p'_1 > (p_2p'_2)(p_3p'_3)$.
Multiplying $(p_2p'_2)(p_3p'_3)$, we have $4 A^2 > (p_2p'_2)^2(p_3p'_3)^2$.
Thus, we have $2A > (p_2p'_2)(p_3p'_3)$.
Noting that $p_2p'_2$, $p_3p'_3\geq 4$ and
$(4+\alpha)(4+\beta) \geq 2(4+\alpha) + 2(4+\beta)$ for $\alpha$, $\beta\geq0$,
we obtain
\begin{align}
A > \frac{(p_2p'_2)(p_3p'_3)}{2} \geq p_2p'_2+ p_3p'_3.
\end{align}
Thus, $v_1$ is of Type 4-1.
In the latter case, $2p_2 p'_2 > A$ holds.
Then, by the same argument, $v_2$ is of Type 4-1.
\par
(2) By the assumption, we have
\begin{align}
v_1: p_2 p'_2 + p_3 p'_3 > A,
\\
v_2: p_3p'_3 + p_1 p'_1 \leq A,
\\
v_3: p_1p'_2 + p_2 p'_2 > A.
\end{align}
Then, we obtain
\begin{align}
2A< p_1p'_2 + 2 p_2 p'_2 + p_3p'_3 \leq 2 p_2 p'_2 +A. 
\end{align}
Thus, $ 2 p_2 p'_2> A$, and $v_1$ is of Type 4-2.
Next,
suppose that $ 2 p_1 p'_1\geq  A$. Then, repeating the same argument as (1),
we obtain $p_2p'_2+ p_3p'_3 \leq A$. This is a contradiction.
Thus, $ 2 p_1 p'_1< A$, and  $v_3$ is of Type 4-3.
\end{proof}

Next, consider the factor (2).

\begin{lem}
\label{lem:v3}
For any cyclic matrix $B$ satisfying the condition \eqref{cond:B1},
if $C(B)\leq 4$, then all $v_i$ are of Type 4-1.
\end{lem}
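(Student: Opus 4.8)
The plan is to show that the hypothesis $C(B) \le 4$ forces, for every cyclic index $i$, the Type 4-1 inequality $p_jp'_j + p_kp'_k \le p_1p_2p_3$ (where $\{i,j,k\}=\{1,2,3\}$), so that all three elementary vertices are of Type 4-1 by \eqref{eq:41cond12} and its cyclic analogues. Set $A = p_1p_2p_3 = p'_1p'_2p'_3$ and $s_i = p_ip'_i$, so that $C(B) = s_1 + s_2 + s_3 - |A|$. Since $B$ is of totally-infinite type we have $s_i \ge 4$ for each $i$, and since $B$ is cyclic all $p_i$ have the same sign, so $|A| = \sqrt{s_1 s_2 s_3}$ (because $A^2 = p_1^2p_2^2p_3^2 = (p_1p'_1)(p_2p'_2)(p_3p'_3) = s_1s_2s_3$). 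Thus the hypothesis reads $s_1 + s_2 + s_3 - \sqrt{s_1 s_2 s_3} \le 4$, and we must deduce $s_j + s_k \le \sqrt{s_1 s_2 s_3}$ for each choice, i.e. $s_j + s_k \le |A|$.

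The core is then an elementary inequality in the three real variables $s_1, s_2, s_3 \ge 4$: assuming $s_1 + s_2 + s_3 \le 4 + \sqrt{s_1 s_2 s_3}$, show $s_j + s_k \le \sqrt{s_1 s_2 s_3}$. By symmetry it suffices to treat the pair $(s_2, s_3)$, i.e. to show $s_2 + s_3 \le \sqrt{s_1 s_2 s_3}$. I would argue as follows. From the hypothesis, $s_1 \ge s_1 + s_2 + s_3 - 4 - (\sqrt{s_1 s_2 s_3} - s_1)$ is not directly useful; instead rearrange the hypothesis as $\sqrt{s_1 s_2 s_3} \ge s_1 + s_2 + s_3 - 4$. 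Since $s_2, s_3 \ge 4$ we have $s_2 s_3 \ge 2(s_2 + s_3)$ — indeed, writing $s_2 = 4+\alpha$, $s_3 = 4+\beta$ with $\alpha,\beta \ge 0$, one checks $(4+\alpha)(4+\beta) = 16 + 4\alpha + 4\beta + \alpha\beta \ge 16 + 2\alpha + 2\beta + 2(4+\alpha) + 2(4+\beta) - 16 = 2(s_2+s_3) + (\text{nonneg})$, the same bookkeeping already used in Lemma \ref{lem:v2}. Hence $s_1 s_2 s_3 \ge 2 s_1 (s_2 + s_3)$, so it would be enough to show $2 s_1 (s_2 + s_3) \ge (s_2 + s_3)^2$, i.e. $2 s_1 \ge s_2 + s_3$. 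This last inequality is where the hypothesis is used: if $2 s_1 < s_2 + s_3$, then $s_1 < s_2$ or $s_1 < s_3$, and I would derive a contradiction by plugging into $\sqrt{s_1 s_2 s_3} \ge s_1 + s_2 + s_3 - 4$ — the left side grows only like the geometric mean while the right side forces $s_2 + s_3$ large, and combined with $s_1 \ge 4$ one gets $s_1 s_2 s_3 \ge (s_1+s_2+s_3-4)^2$ failing. (More cleanly: if $s_1$ is the minimum of the three, then $\sqrt{s_1s_2s_3} \le \sqrt{s_1}\cdot\frac{s_2+s_3}{2}\cdot\frac{2}{\sqrt{s_3}}$-type estimates make $\sqrt{s_1s_2s_3} < s_1+s_2+s_3-4$ as soon as the spread is large, contradicting the hypothesis; if $s_1$ is not the minimum the desired $2s_1 \ge s_2+s_3$ can still fail only when $s_1$ is the median and small, handled the same way.)

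I expect the main obstacle to be packaging this three-variable inequality cleanly rather than any deep idea: one must be careful that the reduction $s_j + s_k \le |A|$ is required for \emph{all three} pairs, not just one, and the case analysis on which $s_i$ is smallest needs to be handled uniformly. A clean route is: first prove $2 s_i \ge s_j + s_k$ for the index $i$ realizing $\min(s_1,s_2,s_3)$ using $C(B) \le 4$ (this is the one genuine use of the hypothesis, and it is forced because otherwise $\sqrt{s_1s_2s_3}$ is too small), then observe $2s_i \ge s_j + s_k$ for the minimal $i$ together with $s_m s_n \ge 2(s_m+s_n)$ for all pairs already gives $s_j s_k \ge 2(s_j+s_k) \ge \dots$ and bootstraps to all three pairs via the arithmetic already in Lemma \ref{lem:v2}(1). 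I would write the argument so that it visibly parallels the proof of Lemma \ref{lem:v2}, citing the elementary fact $(4+\alpha)(4+\beta) \ge 2(4+\alpha)+2(4+\beta)$ for $\alpha,\beta \ge 0$, and conclude that \eqref{eq:41cond12} holds at $v_i$ for every $i$, hence every elementary vertex is of Type 4-1.
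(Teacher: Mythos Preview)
Your proposal overcomplicates what is a one-line argument, and in doing so introduces a genuine gap. The paper's proof is immediate: from $C(B)=s_1+s_2+s_3-A\le 4$ and $s_i\ge 4$ (totally-infinite type), subtract $s_i$ from both sides to get $s_j+s_k\le A+4-s_i\le A$, which is exactly the Type 4-1 condition \eqref{eq:41cond12} at $v_i$. No re-expression of $A$ is needed, because $A=p_1p_2p_3$ already appears in both the hypothesis $C(B)\le 4$ and the conclusion.

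By rewriting $A=\sqrt{s_1s_2s_3}$ you obscure this, and your proposed intermediate step $2s_1\ge s_2+s_3$ is simply false under the hypotheses. Take the skew-symmetric example $p_i=p'_i$ with $(p_1,p_2,p_3)=(2,10,10)$: then $s_1=4$, $s_2=s_3=100$, $A=200$, and $C(B)=204-200=4\le 4$, yet $2s_1=8<200=s_2+s_3$. The conclusion $s_2+s_3\le A$ still holds (with equality), so the lemma is fine, but your sufficient condition fails and the ``contradiction'' you hope to derive does not materialize: here $\sqrt{s_1s_2s_3}=200=s_1+s_2+s_3-4$, so the hypothesis is satisfied. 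The same happens whenever $s_1=4$ and $s_2=s_3>4$. Your ``clean route'' asking for $2s_i\ge s_j+s_k$ with $i$ the index of the \emph{minimum} is even worse: for the minimal index one always has $2s_i\le s_j+s_k$, with equality only when all three coincide. Drop the detour through $\sqrt{s_1s_2s_3}$ and use the subtraction argument directly.
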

\begin{proof}
Since $C(B)\leq 4$ and $p_1p'_1 \geq 4$,
we have $p_2 p'_2 + p_3 p'_3-p_1p_2p_3 \leq 0$.
Thus, $v_1$ is of Type 4-1.
The same is true for $v_2$ and $v_3$.
\end{proof}

Let $B$ a cyclic matrix $B$ satisfying \eqref{cond:B1}.
By Theorem \ref{thm:cc1} and Lemmas \ref{lem:v2} and \ref{lem:v3}, 
it is enough to consider the following cases.
(The letter C stands for ``cyclic''.)

\begin{tabular}{rl}
(C-1) & (4-1,4-1,4-1) and $C(B)\leq 4$ (cluster-cyclic).
\\
(C-2) & (4-1,4-1,4-1) and $C(B)> 4$ (cluster-acyclic).
\\
(C-3) & (4-1,4-1,4-2).
\\
(C-4) & (4-1,4-1,4-3).
\\
(C-5) & (4-2,4-1,4-3).
\end{tabular}

\par
{\bf (C-1)}: (4-1,4-1,4-1) and $C(B)\leq 4$ (the pinwheel).
This is the cluster-cyclic case.
As already discussed above,
Figure \ref{fig:type4-1} is an example
with $C(B)=2$.
Another example is the famous picture of
 the Markov quiver 
in \cite[Fig.~1]{Fock11} and \cite[Fig.~13]{Reading19}
with the exchange matrix
\begin{align}
B=
\begin{pmatrix}
0 & - 2 & 2
\\
2 & 0 & -2
\\
-2 & 2 & 0
\end{pmatrix}
,
\quad
C(B)=4.
\end{align}
Its global pattern is the same as Figure \ref{fig:type4-1}.

\par
{\bf (C-2)}: (4-1,4-1,4-1) and $C(B)> 4$ (the tunnel).
This is the cluster-acyclic case.
We give two examples in Figure \ref{fig:type6-a} and  \ref{fig:type6-b}.
Their global patterns are significantly different from Case 1 as expected.
We note that
the initial exchange matrices for these examples mutate to the acyclic matrices
\begin{align}
\label{eq:acyclic1}
\begin{pmatrix}
0 & - 2 & -2
\\
3 & 0 & 2
\\
3 & -2 & 0
\end{pmatrix}
,
\quad
\begin{pmatrix}
0 & 0 & -3
\\
0 & 0 & 1
\\
2 & -5 & 0
\end{pmatrix}
,
\end{align}
respectively, by the mutation sequence (2, 1, 2, 1, 3).
The first example is a generic one,
while the second one is a finite degeneration
with the finite pair $(1,2)$ in the second matrix in \eqref{eq:acyclic1}.
The inner and the outer regions of the $G$-fan is
connected by a ``tunnel'' whose entrance and exit correspond 
to the acyclic matrices in \eqref{eq:acyclic1}.
For example, 
in Figure  \ref{fig:type6-a}, 
the entrance and the exit are depicted by framed triangles.
The tunnel consists of  the following mutations of the acyclic quiver
for the first matrix in \eqref{eq:acyclic1}:
\vskip-35pt
\begin{align}
\label{eq:tunnel1}
\raisebox{-25pt}[45pt]
{
\begin{tikzpicture}[scale=0.5]
\draw (0,0) circle[radius=0.1];
\draw (2,0) circle[radius=0.1];
\draw (1,1.73) circle[radius=0.1];
\draw[{Latex[length=8pt,width=4pt]}-] (0.2,0)--(1.8,0);
\draw[{Latex[length=8pt,width=4pt]}-] (0.1,0.2)--(0.9,1.53);
\draw[-{Latex[length=8pt,width=4pt]}] (1.9,0.2)--(1.1,1.53);
\draw (1,0) node [below] {\small $(3,2)$};
\draw (0.4,0.85) node [left] {\small $(3,2)$};
\draw (1.6,0.85) node [right] {\small $(2,2)$};
\draw (0,0) node [below left] {\small $1$};
\draw (2,0) node [below right] {\small $2$};
\draw (1.1,1.73) node [right] {\small $3$};
\end{tikzpicture}
}
\overset {1}{\rightarrow}
\raisebox{-25pt}[45pt]
{
\begin{tikzpicture}[scale=0.5]
\draw (0,0) circle[radius=0.1];
\draw (2,0) circle[radius=0.1];
\draw (1,1.73) circle[radius=0.1];
\draw[-{Latex[length=8pt,width=4pt]}] (0.2,0)--(1.8,0);
\draw[-{Latex[length=8pt,width=4pt]}] (0.1,0.2)--(0.9,1.53);
\draw[-{Latex[length=8pt,width=4pt]}] (1.9,0.2)--(1.1,1.53);
\draw (1,0) node [below] {\small $(2,3)$};
\draw (0.4,0.85) node [left] {\small $(2,3)$};
\draw (1.6,0.85) node [right] {\small $(2,2)$};
\draw (0,0) node [below left] {\small $1$};
\draw (2,0) node [below right] {\small $2$};
\draw (1.1,1.73) node [right] {\small $3$};
\end{tikzpicture}
}
\overset {3}{\rightarrow}
\raisebox{-25pt}[45pt]
{
\begin{tikzpicture}[scale=0.5]
\draw (0,0) circle[radius=0.1];
\draw (2,0) circle[radius=0.1];
\draw (1,1.73) circle[radius=0.1];
\draw[-{Latex[length=8pt,width=4pt]}] (0.2,0)--(1.8,0);
\draw[{Latex[length=8pt,width=4pt]}-] (0.1,0.2)--(0.9,1.53);
\draw[{Latex[length=8pt,width=4pt]}-] (1.9,0.2)--(1.1,1.53);
\draw (1,0) node [below] {\small $(2,3)$};
\draw (0.4,0.85) node [left] {\small $(3,2)$};
\draw (1.6,0.85) node [right] {\small $(2,2)$};
\draw (0,0) node [below left] {\small $1$};
\draw (2,0) node [below right] {\small $2$};
\draw (1.1,1.73) node [right] {\small $3$};
\end{tikzpicture}
}
\overset {2}{\rightarrow}
\raisebox{-25pt}[45pt]
{
\begin{tikzpicture}[scale=0.5]
\draw (0,0) circle[radius=0.1];
\draw (2,0) circle[radius=0.1];
\draw (1,1.73) circle[radius=0.1];
\draw[{Latex[length=8pt,width=4pt]}-] (0.2,0)--(1.8,0);
\draw[{Latex[length=8pt,width=4pt]}-] (0.1,0.2)--(0.9,1.53);
\draw[-{Latex[length=8pt,width=4pt]}] (1.9,0.2)--(1.1,1.53);
\draw (1,0) node [below] {\small $(3,2)$};
\draw (0.4,0.85) node [left] {\small $(3,2)$};
\draw (1.6,0.85) node [right] {\small $(2,2)$};
\draw (0,0) node [below left] {\small $1$};
\draw (2,0) node [below right] {\small $2$};
\draw (1.1,1.73) node [right] {\small $3$};
\end{tikzpicture}
}
\end{align}
We may think it as a parallel process to \eqref{eq:v1}.
The boundary of the tunnel becomes fractal if we explore further details.

Generally, the tunnel's entrance's location varies and could be deeper in the fractal structure.
Another example in Figure \ref{fig:type6-a-closeup}
shows that the tunnel still has the same structure
regardless of the location as follows:
\vskip-35pt
\begin{align}
\label{eq:tunnel2}
\raisebox{-25pt}[45pt]
{
\begin{tikzpicture}[scale=0.5]
\draw (0,0) circle[radius=0.1];
\draw (2,0) circle[radius=0.1];
\draw (1,1.73) circle[radius=0.1];
\draw[{Latex[length=8pt,width=4pt]}-] (0.2,0)--(1.8,0);
\draw[-{Latex[length=8pt,width=4pt]}] (0.1,0.2)--(0.9,1.53);
\draw[-{Latex[length=8pt,width=4pt]}] (1.9,0.2)--(1.1,1.53);
\draw (1,0) node [below] {\small $(3,2)$};
\draw (0.4,0.85) node [left] {\small $(2,3)$};
\draw (1.6,0.85) node [right] {\small $(3,3)$};
\draw (0,0) node [below left] {\small $1$};
\draw (2,0) node [below right] {\small $2$};
\draw (1.1,1.73) node [right] {\small $3$};
\end{tikzpicture}
}
\overset {3}{\rightarrow}
\raisebox{-25pt}[45pt]
{
\begin{tikzpicture}[scale=0.5]
\draw (0,0) circle[radius=0.1];
\draw (2,0) circle[radius=0.1];
\draw (1,1.73) circle[radius=0.1];
\draw[{Latex[length=8pt,width=4pt]}-] (0.2,0)--(1.8,0);
\draw[{Latex[length=8pt,width=4pt]}-] (0.1,0.2)--(0.9,1.53);
\draw[{Latex[length=8pt,width=4pt]}-] (1.9,0.2)--(1.1,1.53);
\draw (1,0) node [below] {\small $(3,2)$};
\draw (0.4,0.85) node [left] {\small $(3,2)$};
\draw (1.6,0.85) node [right] {\small $(3,3)$};
\draw (0,0) node [below left] {\small $1$};
\draw (2,0) node [below right] {\small $2$};
\draw (1.1,1.73) node [right] {\small $3$};
\end{tikzpicture}
}
\overset {1}{\rightarrow}
\raisebox{-25pt}[45pt]
{
\begin{tikzpicture}[scale=0.5]
\draw (0,0) circle[radius=0.1];
\draw (2,0) circle[radius=0.1];
\draw (1,1.73) circle[radius=0.1];
\draw[-{Latex[length=8pt,width=4pt]}] (0.2,0)--(1.8,0);
\draw[-{Latex[length=8pt,width=4pt]}] (0.1,0.2)--(0.9,1.53);
\draw[{Latex[length=8pt,width=4pt]}-] (1.9,0.2)--(1.1,1.53);
\draw (1,0) node [below] {\small $(2,3)$};
\draw (0.4,0.85) node [left] {\small $(2,3)$};
\draw (1.6,0.85) node [right] {\small $(3,3)$};
\draw (0,0) node [below left] {\small $1$};
\draw (2,0) node [below right] {\small $2$};
\draw (1.1,1.73) node [right] {\small $3$};
\end{tikzpicture}
}
\overset {2}{\rightarrow}
\raisebox{-25pt}[45pt]
{
\begin{tikzpicture}[scale=0.5]
\draw (0,0) circle[radius=0.1];
\draw (2,0) circle[radius=0.1];
\draw (1,1.73) circle[radius=0.1];
\draw[{Latex[length=8pt,width=4pt]}-] (0.2,0)--(1.8,0);
\draw[-{Latex[length=8pt,width=4pt]}] (0.1,0.2)--(0.9,1.53);
\draw[-{Latex[length=8pt,width=4pt]}] (1.9,0.2)--(1.1,1.53);
\draw (1,0) node [below] {\small $(3,2)$};
\draw (0.4,0.85) node [left] {\small $(2,3)$};
\draw (1.6,0.85) node [right] {\small $(3,3)$};
\draw (0,0) node [below left] {\small $1$};
\draw (2,0) node [below right] {\small $2$};
\draw (1.1,1.73) node [right] {\small $3$};
\end{tikzpicture}
}
\end{align}

\par
{\bf (C-3)}: (4-1,4-1,4-2) (the outside gate).
We give two examples in Figures \ref{fig:type4-2} and \ref{fig:type4-2-b}.
We may think that
the former is a generic one, while the latter is a finite degeneration.

\par
{\bf (C-4)}: (4-1,4-1,4-3) (the inside gate).
We give two examples in Figures \ref{fig:type4-3} and \ref{fig:type4-3-b}.
We may think that
the former is a generic one, while the latter is a finite degeneration.

\par
{\bf (C-5)}:  (4-2,4-1,4-3) (the dual gate).
 We first study the following simple situation.
Consider the skew-symmetrizable matrix
\begin{align}
\label{eq:Bcyclic2}
B=
\begin{pmatrix}
0 & - 2 & p_2
\\
3 & 0 & -3
\\
-p_2 & 2 & 0
\end{pmatrix}
\quad
(p_2 \geq 2).
\end{align}
Assume that the vertex $v_2$ is Type 4-1.
If $p_2\leq 4$, then $v_1$ and $v_3$ are also Type 4-1.
So, we dismiss the case.
The rest is separated into three cases.
\par
(a) If $p_2\geq 7$, we have
\begin{itemize}
\item
the vertex $v_{1}$ is of Type 4-2 with $N=0$,
where the equality in \eqref{eq:case42N} is not attained,
\item
the vertex $v_{3}$ is of Type 4-3 with $N=0$,
where the equality in \eqref{eq:case42N} is not attained.
\end{itemize}
\par
(b) If $p_2=6$, we have
\begin{itemize}
\item
the vertex $v_{1}$ is of Type 4-2 with $N=0$,
where the equality in \eqref{eq:case42N} is attained.
\item
the vertex $v_{3}$ is of Type 4-3 with $N=1$.
where the equality in \eqref{eq:case43N} is attained.
\end{itemize}
\par
(c) If $p_2=5$, we have
\begin{itemize}
\item
the vertex $v_{1}$ is of Type 4-2 with $N=1$,
where the equality in \eqref{eq:case42N} is attained.
\item
the vertex $v_{3}$ is of Type 4-3 with $N=2$,
where the equality in \eqref{eq:case43N} is attained.
\end{itemize}
The $G$-fans for $p_2=7$, 6, and 5 are presented in Figures \ref{fig:type5-a}--\ref{fig:type5-c}.
We may think that Figure \ref{fig:type5-a} is a generic one,
while Figures \ref{fig:type5-b} and  \ref{fig:type5-c} are its finite degenerations.

The above classification seems applicable to a general matrix $B$ in \eqref{eq:Bcyclic1}.
Assume that that only the vertex $v_2$ is Type 4-1.
Then, if $p'_2 > p_1p_3$, it belongs to Case (a).
(We may consider it as a generic one.)
 Similarly, if $p'_2 = p_1p_3$,
it belongs to Case (b). 
If $p'_2 < p_1p_3$,
we found only the following solution so far, which belongs to Case (c):
\begin{align}
\begin{pmatrix}
0 & - b & ab-1
\\
a & 0 & -a
\\
-ab+1 & b & 0
\end{pmatrix}
.
\end{align}

\begin{table}
\begin{tabular}{l|l|l}
\hline
local pattern & global pattern (Fig.) & finite degeneration (Fig.)
\\
\hline
(A)\ \ \, (1,2,3) & the wing (\ref{fig:type1}, \ref{fig:type2}, \ref{fig:type3})
& not presented
\\
(C-1) (4-1,4-1,4-1), $C(B)\leq 4$ & the pinwheel (\ref{fig:type4-1}) & none
\\
(C-2) (4-1,4-1,4-1), $C(B)> 4$ & the tunnel (\ref{fig:type6-a}, \ref{fig:type6-a-closeup})
& the wide tunnel (\ref{fig:type6-b})
\\
(C-3) (4-1,4-1,4-2) & the outside gate (\ref{fig:type4-2}) & the wide outside gate (\ref{fig:type4-2-b})
\\
(C-4) (4-1,4-1,4-3)  & the inside gate (\ref{fig:type4-3}) & the wide intside gate (\ref{fig:type4-3-b})
\\
(C-5) (4-2,4-1,4-3)  & the dual gates (\ref{fig:type5-a}) & the dual wide gates (\ref{fig:type5-b})
\\
  & & the skewed dual wide gates (\ref{fig:type5-c})
\\
\hline
\end{tabular}
\bigskip
\caption{Correspondence between local and global patterns.}
\label{tab:corres1}
\end{table}

The correspondence between the local and global
patterns is summarized in Table \ref{tab:corres1}.
We naively expect that they exhaust all global patterns of rank 3 $G$-fans up to rotations and reflections
and finite degenerations.
The more complete study will be left as a future problem.
\clearpage

\begin{figure}
\centering
\includegraphics[width=320pt]{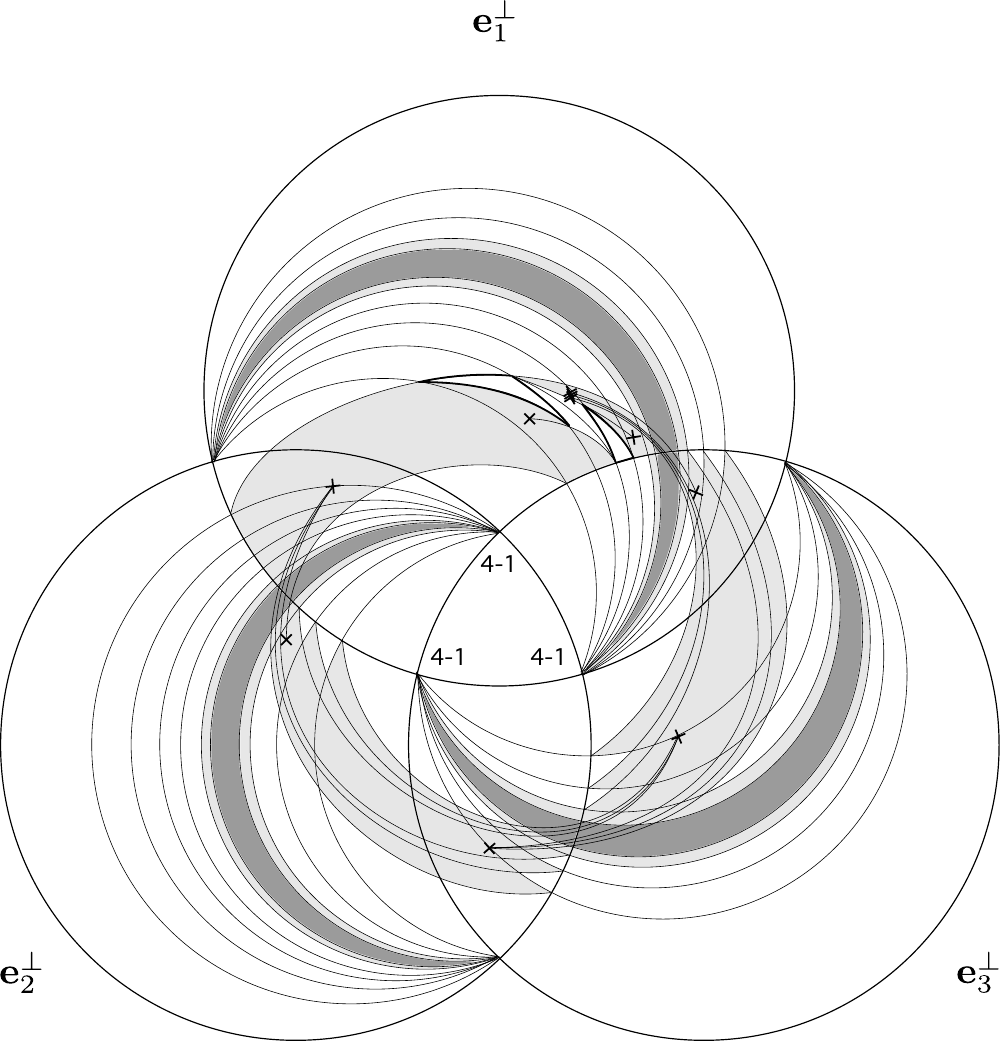}
\\
{
\small
$
B=
\begin{pmatrix}
0 & - 6 & 4886
\\
9 & 0 & -830
\\
-7329 & 830 & 0
\end{pmatrix}
$
}
\hskip30pt
\raisebox{-25pt}[45pt]
{
\begin{tikzpicture}[scale=0.8]
\draw (0,0) circle[radius=0.1];
\draw (2,0) circle[radius=0.1];
\draw (1,1.73) circle[radius=0.1];
\draw[{Latex[length=8pt,width=4pt]}-] (0.2,0)--(1.8,0);
\draw[-{Latex[length=8pt,width=4pt]}] (0.1,0.2)--(0.9,1.53);
\draw[{Latex[length=8pt,width=4pt]}-] (1.9,0.2)--(1.1,1.53);
\draw (1,0) node [below] {\small $(9,6)$};
\draw (0.4,0.85) node [left] {\small $(4886,7329)$};
\draw (1.6,0.85) node [right] {\small $(830,830)$};
\draw (0,0) node [below left] {\small $1$};
\draw (2,0) node [below right] {\small $2$};
\draw (1.1,1.73) node [right] {\small $3$};
\end{tikzpicture}
}
\caption[dummy]{
``The tunnel''.
An example with
$L=3$ and $C(B)=28$.
The framed triangle on the right side corresponds to the acyclic matrix obtained from $B$
by the mutation sequence $(2, 1, 2, 1, 3)$.
Along the tunnel,
it is mutated into the same matrix at the framed triangle on the left side by 
the mutation sequence $(1,3,2)$
in \eqref{eq:tunnel1}.
}
\label{fig:type6-a}
\end{figure}

\begin{figure}
\centering
\includegraphics[width=320pt]{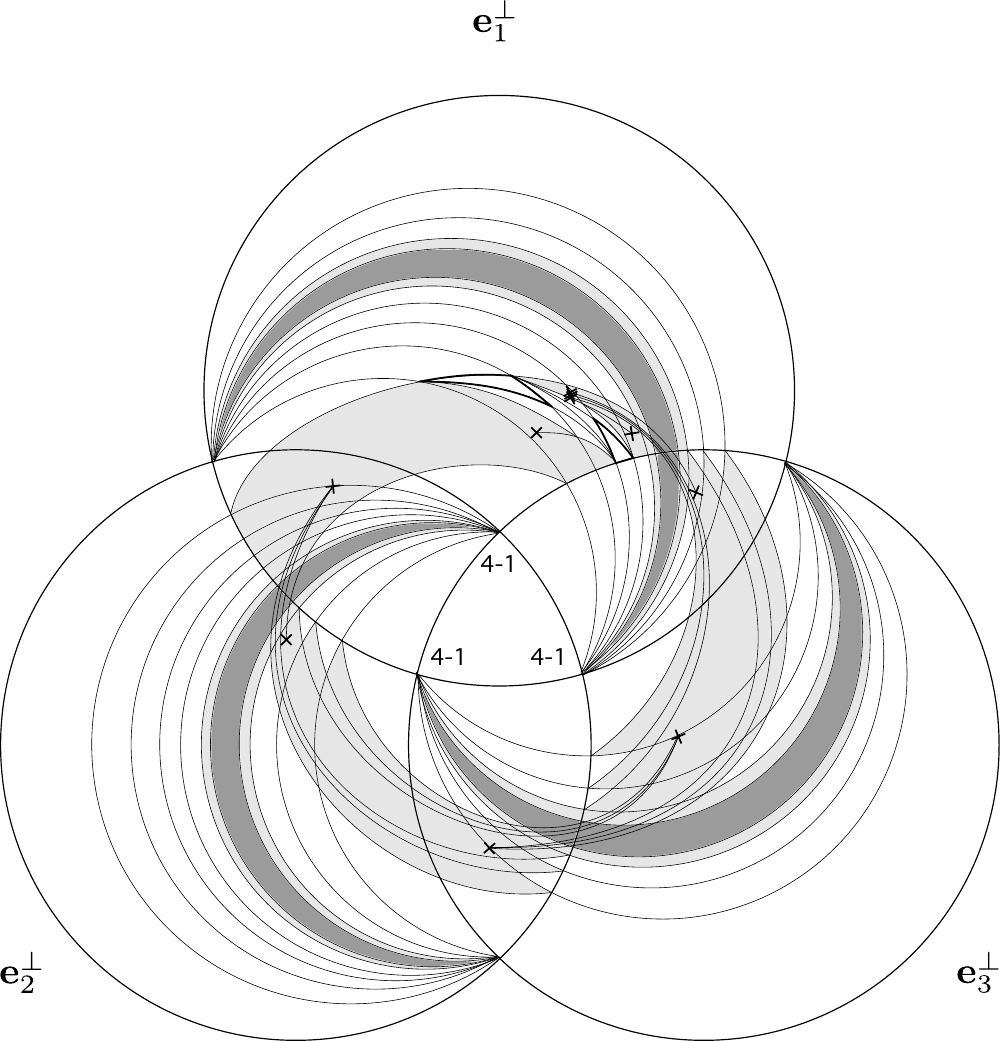}
\\
{
\small
$
B=
\begin{pmatrix}
0 & - 15 & 2013
\\
2 & 0 & -139
\\
-1342 & 695 & 0
\end{pmatrix}
$
}
\hskip30pt
\raisebox{-25pt}[45pt]
{
\begin{tikzpicture}[scale=0.8]
\draw (0,0) circle[radius=0.1];
\draw (2,0) circle[radius=0.1];
\draw (1,1.73) circle[radius=0.1];
\draw[{Latex[length=8pt,width=4pt]}-] (0.2,0)--(1.8,0);
\draw[-{Latex[length=8pt,width=4pt]}] (0.1,0.2)--(0.9,1.53);
\draw[{Latex[length=8pt,width=4pt]}-] (1.9,0.2)--(1.1,1.53);
\draw (1,0) node [below] {\small $(2,15)$};
\draw (0.4,0.85) node [left] {\small $(2013,1342)$};
\draw (1.6,0.85) node [right] {\small $(695,139)$};
\draw (0,0) node [below left] {\small $1$};
\draw (2,0) node [below right] {\small $2$};
\draw (1.1,1.73) node [right] {\small $3$};
\end{tikzpicture}
}
\caption{
``The wide tunnel''.
A variant of Figure \ref{fig:type6-a}.
An example with
$L=3$ and $C(B)=11$.
The pattern is the same as Figure \ref{fig:type6-a}
except for the details around the tunnel.
}
\label{fig:type6-b}
\end{figure}

\begin{figure}
\centering
\includegraphics[width=320pt]{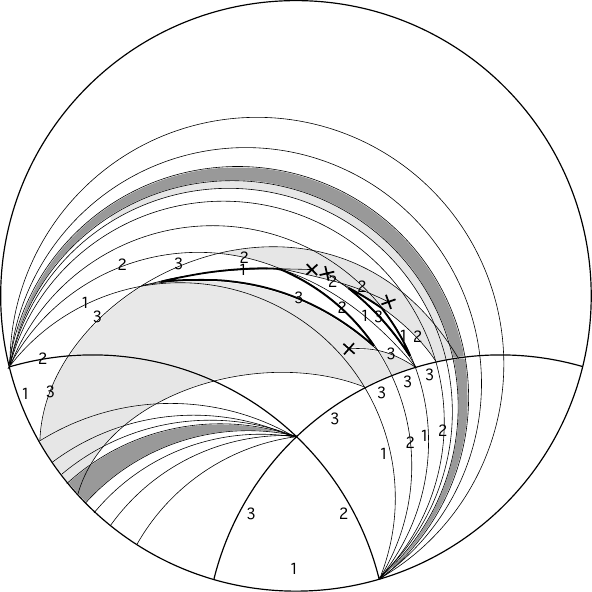}
\\
{
\small
$
B=
\begin{pmatrix}
0 & - 16 & 237602
\\
24 & 0 & -14889
\\
-356403 & 14889 & 0
\end{pmatrix}
$
}
\hskip30pt
\raisebox{-25pt}[45pt]
{
\begin{tikzpicture}[scale=0.8]
\draw (0,0) circle[radius=0.1];
\draw (2,0) circle[radius=0.1];
\draw (1,1.73) circle[radius=0.1];
\draw[{Latex[length=8pt,width=4pt]}-] (0.2,0)--(1.8,0);
\draw[-{Latex[length=8pt,width=4pt]}] (0.1,0.2)--(0.9,1.53);
\draw[{Latex[length=8pt,width=4pt]}-] (1.9,0.2)--(1.1,1.53);
\draw (1,0) node [below] {\small $(24,16)$};
\draw (0.4,0.85) node [left] {\small $(237602,356403)$};
\draw (1.6,0.85) node [right] {\small $(14889,14889)$};
\draw (0,0) node [below left] {\small $1$};
\draw (2,0) node [below right] {\small $2$};
\draw (1.1,1.73) node [right] {\small $3$};
\end{tikzpicture}
}
\caption{
Close-up of the tunnel for another variant of Figure \ref{fig:type6-a} with $C(B)=39$.
The attached numbers are indices of edges.
The framed triangle on the right side corresponds to the acyclic matrix
$
\begin{pmatrix}
0 & - 2 & 2
\\
3 & 0 & 3
\\
-3 & -3 & 0
\end{pmatrix}
$
obtained from $B$
by the mutation sequence $(2, 1, 2, 1, 3, 1)$.
Along the tunnel,
it is mutated into the same matrix at the framed triangle on the left side by 
the mutation sequence $(3,1,2)$ in in \eqref{eq:tunnel2}.
}
\label{fig:type6-a-closeup}
\end{figure}

\begin{figure}
\centering
\includegraphics[width=320pt]{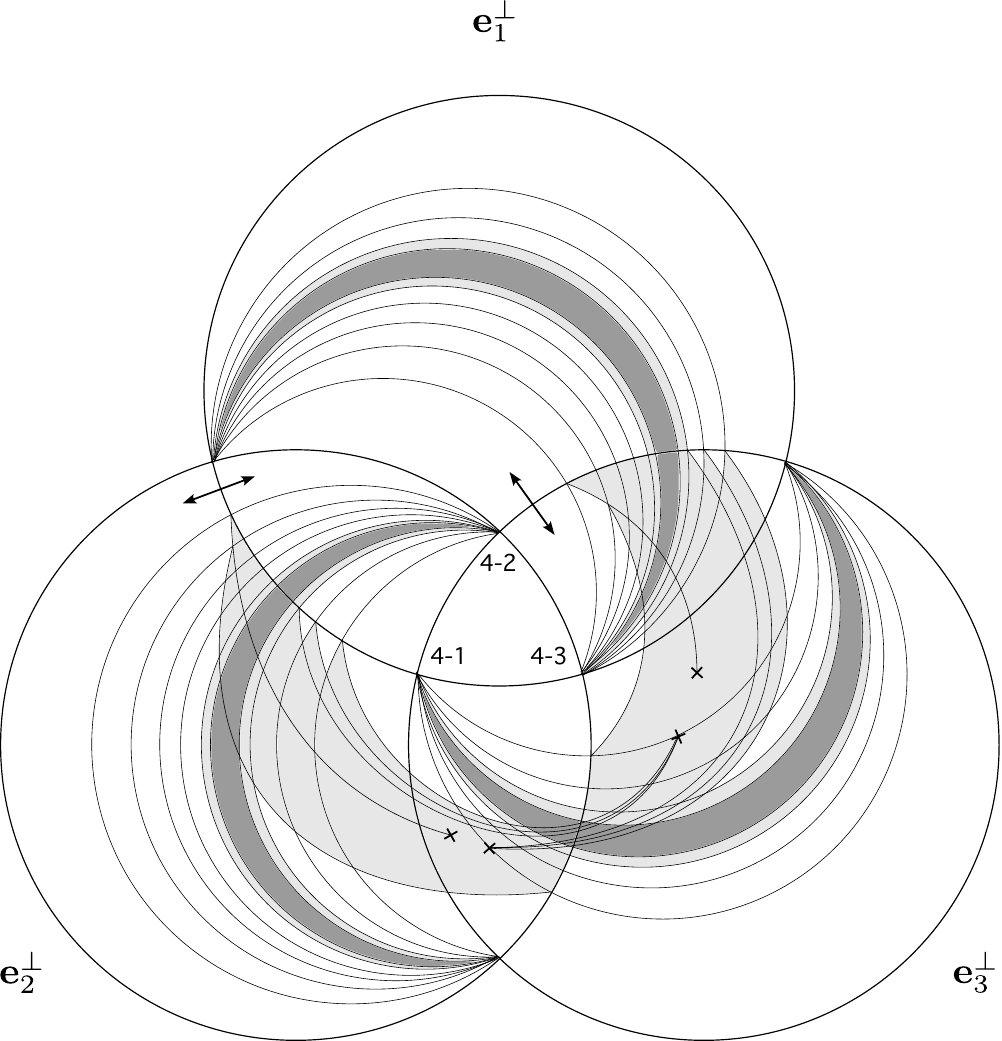}
\\
{
\small
$
B=
\begin{pmatrix}
0 & - 2 & 7
\\
3 & 0 & -3
\\
-7 & 2 & 0
\end{pmatrix}
$
}
\hskip30pt
\raisebox{-25pt}[45pt]
{
\begin{tikzpicture}[scale=0.8]
\draw (0,0) circle[radius=0.1];
\draw (2,0) circle[radius=0.1];
\draw (1,1.73) circle[radius=0.1];
\draw[{Latex[length=8pt,width=4pt]}-] (0.2,0)--(1.8,0);
\draw[-{Latex[length=8pt,width=4pt]}] (0.1,0.2)--(0.9,1.53);
\draw[{Latex[length=8pt,width=4pt]}-] (1.9,0.2)--(1.1,1.53);
\draw (1,0) node [below] {\small $(3,2)$};
\draw (0.4,0.85) node [left] {\small $7$};
\draw (1.6,0.85) node [right] {\small $(2,3)$};
\draw (0,0) node [below left] {\small $1$};
\draw (2,0) node [below right] {\small $2$};
\draw (1.1,1.73) node [right] {\small $3$};
\end{tikzpicture}
}
\caption{
``The dual gates''.
The $G$-fan for the matrix \eqref{eq:Bcyclic2} with $p_2=7$.
This is a hybrid of Figures \ref{fig:type4-2} and \ref{fig:type4-3}.
}
\label{fig:type5-a}
\end{figure}

\begin{figure}
\centering
\includegraphics[width=320pt]{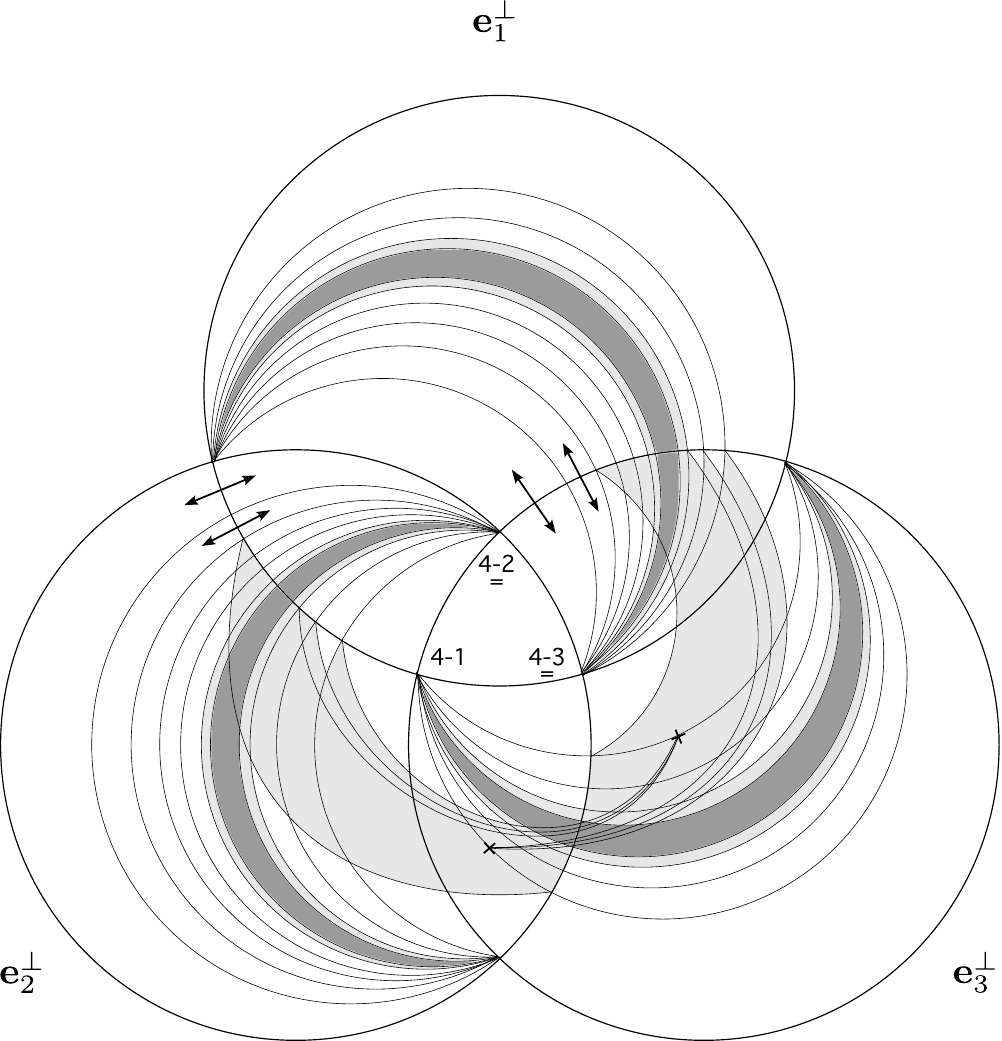}
\\
{
\small
$
B=
\begin{pmatrix}
0 & - 2 & 6
\\
3 & 0 & -3
\\
-6 & 2 & 0
\end{pmatrix}
$
}
\hskip30pt
\raisebox{-25pt}[45pt]
{
\begin{tikzpicture}[scale=0.8]
\draw (0,0) circle[radius=0.1];
\draw (2,0) circle[radius=0.1];
\draw (1,1.73) circle[radius=0.1];
\draw[{Latex[length=8pt,width=4pt]}-] (0.2,0)--(1.8,0);
\draw[-{Latex[length=8pt,width=4pt]}] (0.1,0.2)--(0.9,1.53);
\draw[{Latex[length=8pt,width=4pt]}-] (1.9,0.2)--(1.1,1.53);
\draw (1,0) node [below] {\small $(3,2)$};
\draw (0.4,0.85) node [left] {\small $6$};
\draw (1.6,0.85) node [right] {\small $(2,3)$};
\draw (0,0) node [below left] {\small $1$};
\draw (2,0) node [below right] {\small $2$};
\draw (1.1,1.73) node [right] {\small $3$};
\end{tikzpicture}
}
\caption{
``The dual wide gates''.
The $G$-fan for the matrix \eqref{eq:Bcyclic2} with $p_2=6$.
This is a hybrid of Figures \ref{fig:type4-2-b} and \ref{fig:type4-3-b}.
}
\label{fig:type5-b}
\end{figure}

\begin{figure}
\centering
\includegraphics[width=320pt]{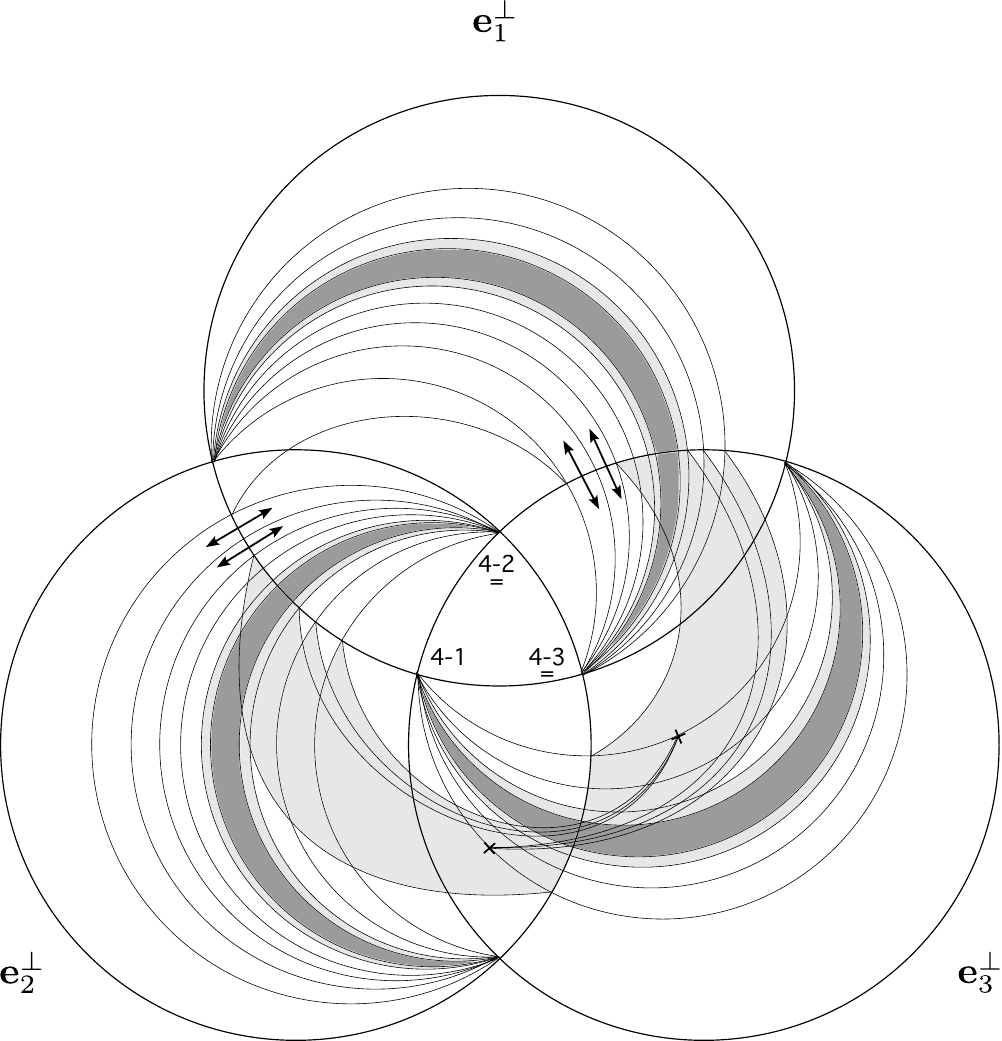}
\\
{
\small
$
B=
\begin{pmatrix}
0 & - 2 & 5
\\
3 & 0 & -3
\\
-5 & 2 & 0
\end{pmatrix}
$
}
\hskip30pt
\raisebox{-25pt}[45pt]
{
\begin{tikzpicture}[scale=0.8]
\draw (0,0) circle[radius=0.1];
\draw (2,0) circle[radius=0.1];
\draw (1,1.73) circle[radius=0.1];
\draw[{Latex[length=8pt,width=4pt]}-] (0.2,0)--(1.8,0);
\draw[-{Latex[length=8pt,width=4pt]}] (0.1,0.2)--(0.9,1.53);
\draw[{Latex[length=8pt,width=4pt]}-] (1.9,0.2)--(1.1,1.53);
\draw (1,0) node [below] {\small $(3,2)$};
\draw (0.4,0.85) node [left] {\small $5$};
\draw (1.6,0.85) node [right] {\small $(2,3)$};
\draw (0,0) node [below left] {\small $1$};
\draw (2,0) node [below right] {\small $2$};
\draw (1.1,1.73) node [right] {\small $3$};
\end{tikzpicture}
}
\caption{
``The skewed dual wide gates''.
The $G$-fan  for the matrix \eqref{eq:Bcyclic2} with $p_2=5$.
This pattern appeared in \cite[Figure 18]{Muller15}.
}
\label{fig:type5-c}
\end{figure}

\clearpage

\bibliography{../../biblist/biblist.bib}
\end{document}